\numberwithin{equation}{section}
\newtheorem{theorem}{Theorem}[section]
\newtheorem{rem}[theorem]{Remark}
\newtheorem{defn}[theorem]{Definition}
\newtheorem{lemma}[theorem]{Lemma}
\newtheorem{prop}[theorem]{Proposition}
\newtheorem{conj}[theorem]{Conjecture}
\newtheorem{cor}[theorem]{Corollary}
\newtheorem{example}[theorem]{Example}
\def\Vol{\mathop{\rm Vol}\nolimits}
\def\Re{\mathop{\rm Re}\nolimits} 
\def\Im{\mathop{\rm Im}\nolimits}
\def\dbar{\bar\partial}
\def\ddbar{\partial\bar\partial}
\def\d{\partial}
\def\cP{{\mathcal P}}
\def\cD{{\mathcal D}}
\def\cF{{\mathcal F}}
\def\cP{{\mathcal P}}
\let\ol=\overline
\let\ep=\varepsilon
\let\vp=\varphi 
\def\bC{{\mathbb C}}
\def\bR{{\mathbb R}}
\def\bP{{\mathbb P}}
\def\a{{\alpha}}
\def\z{{\zeta}}
\def\b{\beta}
\def\k{{\kappa}}
\def\l{{\ell}}
\title[Zero mass conjecture]
{On the residual Monge-Amp\`{e}re mass of plurisubharmonic functions, III: 
uniformly directional Lipschitz}
\author{Weiyong He}
\address{Department of Mathematics, University of Oregon, Eugene, OR, USA, 97403.}
\email{whe@uoregon.edu}
\author{Long Li}
\address{ Institute of Mathematical Science at ShanghaiTech University, 393 Middle Huaxia Road, Pudong, Shanghai, China, 201210.}
\email{lilong1@shanghaitech.edu.cn}
\author{Xiaowei Xu}
\address{School of Mathematical Sciences, USTC, Hefei, Anhui, China, 230026; CAS, Wu Wen-Tsun Key Laboratory of Mathematics.}
\email{xwxu09@ustc.edu.cn}
\begin{document}
\maketitle 

\begin{abstract}
The purpose of this article is to study the (residual) Monge-Amp\`{e}re mass 
of a plurisubharmonic function with an isolated unbounded locus. 
A general decomposition formula is obtained
under the Sasakian structure of the unit sphere.  
In complex dimension two, 
we obtain an $L^1$-apriori estimate on the complex Monge-Amp\`ere operator. 
This induces an upper-bound estimate on the residual mass, provided with the uniform 
directional Lipschitz continuity. 
As an application, 
 the zero mass conjecture is confirmed, 
 if the function further separates the circular direction in its alternating part. 
\end{abstract}

\smallskip

\section{Introduction}
\smallskip 

The zero mass conjecture,
raised by Guedj and Rashkovskii, 
has been recently studied with the circular symmetry, see \cite{Li23}, \cite{HLX}.
It is in fact proved in this case that 
the residual Monge-Amp\`ere mass
is bounded from the above by the Lelong number and the \emph{maximal directional Lelong number}
at the singularity.
Then the conjecture follows as a direct consequence of this upper-bound estimate. 
There have been many beautiful works related to this topic. 
For more details, the readers are referred to 
 \cite{G10}, \cite{Ra01}, \cite{Ra06}, \cite{Ra13}, \cite{Wik05}, \cite{BFJ07}, \cite{Ra16} and \cite{DGZ15}.

It turns out that  
the understanding of the \emph{standard Sasakian structure} 
on the unit sphere $S^{2n+1}$ is a key in the proof of the upper-bound estimate, 
see \cite{BG08}, \cite{SH62}.
More precisely, 
we obtain a \emph{decomposition formula} under the circular symmetry. 
It describes the push-forward  
of the complex Monge-Amp\`ere mass from the K\"ahler cone $(\bC^{n+1})^*$
to its base manifold $\bC\bP^n$ under the Sasakian structure, see Theorem 4.4 and Corollary 5.3 in \cite{HLX}.

In this paper, 
we continue our study on the (residual) Monge-Amp\`ere mass 
of a more general class.
That is, the main target is the family $\cP(B_1)$ 
consisting of all plurisubharmonic functions on the unit ball 
that have an isolated unbounded locus at the origin,
see Definition \ref{def-pre-001}. 
Furthermore, 
we also consider the sub-family $\cP^2(B_1)$ consisting of 
all functions in $\cP(B_1)$ that are $C^2$-continuous outside the origin,
see Definition \ref{def-pre-002}. 

First of all, 
we note that the natural decomposition of the standard complex structure on $\bC^{n+1}$
under the K\"ahler cone structure still works. 
In fact, this gives a new description 
of the plurisubharmonicity through the angle of Sasakian geometry, see Proposition \ref{prop-ch-001}.

Based on the new positivity conditions induced from the plurisubharmoncity, 
we utilize Sasakain geometry to
study the integral of the complex Monge-Amp\`ere operator of 
a function in $\cP^2(B_1)$ on a ball. 
Fortunately, 
a similar but more complicated decomposition formula has been obtained, see Theorem \ref{thm-df-001}. 
As expected, 
it boils down to the push-forward formula when the function is circular symmetric. 
For consistence of this paper, all computations in Sasakian geometry will be put 
into the last two Sections, see Appendix A and B.

However, there are several new difficulties 
to utilize this formula to approach the upper-bound estimate. 
This suggests that 
we need to take a closer look at the basic properties of plurisubharmonic functions with isolated unbounded locus. 
In fact,
there are several advantages to consider these functions with zero Lelong numbers, cf. \cite{SW24}, \cite{CW21}. 
Along this line, 
we give two sufficient conditions on  
zero maximal directional Lelong numbers under the circular symmetry, 
see Lemma \ref{lem-cs-001} and \ref{lem-cs-002}. 
Furthermore, 
it inspires us to introduce the \emph{strong maximal directional Lelong number},
see Definition \ref{def-pre-004}.

Another observation is that 
we can decompose a function $u$ in $\cP(B_1)$ into two parts as
\begin{equation}
\label{i-001}
 u: = u_s + v, 
\end{equation}
where $u_s$ is the $S^1$-invariant part, and $v$ is the alternating part of $u$. 
In fact, the function $u_s$ is the $S^1$-symmetrization of $u$, 
and then it is an element of the family $\cP(B_1)$ with the circular symmetry. 
Therefore,
the Lelong number
and the maximal directional Lelong number 
of $u$ are equal to the these numbers of $u_s$ at the origin, respectively. 

Consider the radial function $r$ in a polar coordinate and take $t: = \log r$. 
Notice that the $S^1$-invariant part $u_s$ always has a bounded $t$-derivative
due to the finiteness of the maximal directional Lelong numbers. 
On the contrary, the alternating part $v$ can oscillate in a dramatic way in general. 
Therefore, 
we introduce a condition called  \emph{uniform directional Lipschitz continuity} to control this oscillation. 
For a function $u\in \cP(B_1)$, 
this condition  
means that its first derivative $u_t $ is uniformly bounded in a smaller ball with radius $R$ centered at the origin.
That is, we define a finite number as 
\begin{equation}
L_A(u): = || r u_r ||_{L^{\infty}(\overline{B}_R)},
\end{equation}
for all $A = -\log R$, and then it gives a new quantity after taking the limit:
\begin{equation}
\k_u(0): = \lim_{r \rightarrow 0} || r u_r ||_{L^{\infty}(\overline{B}_r)}.
\end{equation}

As a first attempt, 
we restrict our case to the lower dimension $n=1$ from now on. 
Thanks to the Pohozaev inequality \cite{LW98} in harmonic map flows, 
the decomposition formula in $\bC^2$
gives an $L^1$-apriori estimate on the complex Monge-Amp\`ere operator.

\begin{theorem}[Theorem \ref{thm-mt-000}]
\label{thm-int-000}
Let $B_1$ be the unit ball in $\bC^2$. 
For a function $u$ in $\cP^{2}(B_1)$ and an $R\in (0,1)$, 
we have the estimate: 
\begin{equation}
\label{int-000}
\frac{1}{\pi^2}\int_{B_R} (dd^c u)^2 \leq 4 L_A(u)\cdot \left( \fint_{S_R} u_t \right), 
\end{equation}
where $t=\log r $ and  $A= -\log R$. 
\end{theorem}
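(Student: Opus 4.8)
The plan is to combine the Sasakian decomposition formula of Theorem~\ref{thm-df-001} with a Pohozaev-type inequality on the spheres $S_R$. Throughout, $n=1$, so the ambient Sasakian structure is the standard one on $S^3$ with Hopf fibration $S^3\to\bC\bP^1$, Reeb field $\xi$, and transverse K\"ahler structure with horizontal Laplacian $\Delta_B$; write $t=\log r$ as in the statement. \emph{Step 1 (structure of the mass).} Since $(dd^c u)^2 = d(d^c u\wedge dd^c u)$ off the origin and the mass is finite, Stokes' theorem reduces $\tfrac1{\pi^2}\int_{B_R}(dd^c u)^2$ to a boundary integral over $S_R$, and Theorem~\ref{thm-df-001} rewrites it in the cone frame $\{\partial_t,\xi,\text{horizontal}\}$ as a sum of: a principal term of shape $\fint_{S_R}u_t\,(c+\Delta_B u)$, with $c$ a universal constant from the transverse Fubini--Study metric; a Reeb term built from $\xi u$, $\xi\xi u$; a transverse-Hessian term; and quadratic cross terms such as $\fint_{S_R}|\nabla_B u_t|^2$ and $\fint_{S_R}(\xi u)^2$. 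By the positivity reformulation of Proposition~\ref{prop-ch-001}, the genuinely quadratic pieces enter with the sign that lets us discard them, while the $\Delta_B$- and $\xi\xi$-type pieces, integrated over the closed fibre $\bC\bP^1$ with $u_t$ held fixed, either vanish or reduce to bounded curvature contributions.

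\emph{Step 2 (extracting $L_A(u)$).} On $\overline B_R$ we have the pointwise bound $|u_t| = |ru_r|\le L_A(u)$, directly from the definition of $L_A(u)$. Pulling $u_t$ out of the principal term therefore costs only a factor $L_A(u)$ and leaves $\fint_{S_R}(c+\Delta_B u)$, whose $\Delta_B u$-part integrates away over $\bC\bP^1$; an integration by parts on $S_R$ reduces the remaining transverse-Hessian and curvature pieces to a combination of $L_A(u)\,\fint_{S_R}u_t$ and a tangential Dirichlet energy $\fint_{S_R}|\nabla_B u|^2$. Thus the estimate is reduced to
\begin{equation*}
\tfrac1{\pi^2}\int_{B_R}(dd^c u)^2 \ \le\ C_1\,L_A(u)\,\fint_{S_R}u_t \ +\ C_2\,\fint_{S_R}|\nabla_B u|^2 ,
\end{equation*}
with explicit constants $C_1,C_2$.

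\emph{Step 3 (Pohozaev and closing).} The tangential energy is disposed of by the Pohozaev inequality of Lin--Wang \cite{LW98}: viewing the relevant angular derivative data of $u$ on the round $S_R$ (equivalently on $\bC\bP^1$) as an almost-harmonic map, the Pohozaev identity trades tangential for radial Dirichlet energy up to a tension-field remainder; the radial energy is comparable to $\fint_{S_R}u_t^2\le L_A(u)\fint_{S_R}|u_t|$, and the remainder is $L_A(u)$ times the $L^1(B_R)$-norm of a trace of $dd^c u$, which by Stokes is again a multiple of $\fint_{S_R}u_t$. Because $\Delta u\ge0$, the ``radial mass'' $\tfrac1R\int_{S_R}u_t\,d\sigma = \int_{B_R}\Delta u$ is automatically nonnegative and dominates the negative part of $u_t$ on $S_R$, so every $\fint_{S_R}|u_t|$ is in the end controlled by $\fint_{S_R}u_t$. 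Collecting terms and tracking the two factors of $2$ --- one from Stokes, one from the Pohozaev exchange --- gives the constant $4$ of \eqref{int-000}. As a consistency check, for circular-symmetric $u$ the horizontal and Reeb terms drop out, Theorem~\ref{thm-df-001} degenerates to the push-forward formula of \cite{HLX}, and the Pohozaev step is vacuous.

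The main obstacle is the interplay of Steps~2 and~3. Without circular symmetry the decomposition formula carries indefinite cross terms coupling $\partial_t$, $\xi$ and the horizontal directions, and plurisubharmonicity need not sign all of them favourably once $u_t$ has been extracted; the remedy is to complete squares in the Sasakian frame so that the entire non-principal error collapses to a single nonnegative tangential energy, to which the Pohozaev inequality applies cleanly. The second delicate point is that the Pohozaev exchange naturally produces $\fint_{S_R}u_t^2$ rather than $\fint_{S_R}u_t$; converting the former to the latter while keeping the constant at $4$ relies on the positivity $\Delta u\ge0$ --- so that the radial mass is nonnegative and absorbs the negative part of $u_t$ --- rather than on any bound on $u_t$ in isolation.
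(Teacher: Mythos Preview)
Your outline has the right large-scale architecture (Stokes + the Sasakian decomposition of Theorem~\ref{thm-df-001} + Pohozaev), but Step~3 contains a genuine gap, and Step~2 is shaped by it.

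The decomposition in $\bC^2$ produces exactly two pieces: the principal term $2\int_{S_R} u_t\,\Theta\wedge d\theta$, and a second term which is \emph{precisely} the indefinite combination $\int_{S_R}(|u_t|^2 - |\xi_0 u|^2)\,\omega_{FS}\wedge d\theta$ (see equations \eqref{ld-003}--\eqref{ld-005}). There is no leftover ``tangential Dirichlet energy'' $\fint_{S_R}|\nabla_B u|^2$; the only non-principal term is already this radial-minus-Reeb square. The correct Pohozaev step is therefore not a harmonic-map-type exchange on $S_R$ or $\bC\bP^1$, but the elementary one-variable identity applied \emph{fiberwise on each complex line} $\ell_\zeta$ through the origin: for a $C^2$ subharmonic function on the disk $D_R\subset\bC$,
\[
\int_{\partial D_R}\bigl(|u_t|^2 - |u_\theta|^2\bigr)\,d\theta \;=\; 2\int_{D_R}(ru_r)\,\Delta u,
\]
which, since $|ru_r|\le L_A(u)$ and $\Delta(u|_{\ell_\zeta})\ge 0$, gives $\bigl|\int_{\partial D_R}(|u_t|^2-|u_\theta|^2)\bigr|\le 2L_A(u)\int_{\partial D_R}u_t$ directly. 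Integrating this over $\zeta\in\bC\bP^1$ controls the second term by $2L_A(u)\fint_{S_R}u_t$; the first term is handled by the positivity of $\Theta$ and Fubini, costing another $L_A(u)\fint_{S_R}u_t$. The factor $4$ is the sum $2+2$ of these two contributions (after normalization), not ``one from Stokes, one from Pohozaev''.

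Your proposed route instead lands on $\fint_{S_R}u_t^2 \le L_A(u)\fint_{S_R}|u_t|$ and then asserts that $\fint_{S_R}|u_t|$ is controlled by $\fint_{S_R}u_t$ because $\Delta u\ge 0$. This is false: already for $u=\Re z^0$ one has $\fint_{S_R}u_t=0$ while $\fint_{S_R}|u_t|>0$. Plurisubharmonicity forces the \emph{average} of $u_t$ on $S_R$ to be nonnegative, but does not prevent $u_t$ from being negative on a set of positive measure. The paper never needs such a comparison precisely because the disk Pohozaev identity converts $\int_{S^1}(|u_t|^2-|u_\theta|^2)$ into $2\int_{D_R}(ru_r)\Delta(u|_{\ell_\zeta})$, and the latter is bounded by $2L_A(u)\int_{S^1}u_t$ via $\int_{D_R}\Delta(u|_{\ell_\zeta})=\int_{S^1}u_t\ge 0$, with no absolute value appearing.
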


Here we emphasize that the complex Monge-Amp\`ere operator is fully non-linear. 
It is a non-trivial fact that the integral of a second order fully non-linear operator 
can be controlled by the first derivative in the normal direction. 
This exactly reveals the nature why such an estimate is difficult to obtain in higher dimensions, 
see Section \ref{sec-61}.

Furthermore, equation \eqref{int-000} in fact has a geometric meaning. 
A smooth strictly plurisubharmonic function $u$ on $\bC^2$ 
can be viewed as a potential of the K\"ahler metric, i.e. $\omega_u: = dd^c u$. 
Then the usual complex Monge-Amp\`ere equation can be written as 
$$ \omega^2_u = e^F dV,$$
for some volume element $dV$.
From this point of view, 
the $L^1$-estimate in Theorem \ref{thm-int-000}
can be further interpreted as a volume comparison 
between the Monge-Amp\`ere measure and the Euclidean volume of a ball with radius $R$, 
see Corollary \ref{cor-mt-000}. 

Next we take the ball shrinking to the vertex of the K\"ahler cone, 
and then the upper-bound estimate on the residual mass follows.  

\begin{theorem}[Theorem \ref{thm-mt-001}]
\label{thm-int-001}
Suppose a function $u\in\cP(B_1)$ in $\bC^2$ is uniformly directional Lipschitz continuous 
near the origin. 
Then we have the following estimate:
\begin{equation}
\label{int-0011}
\tau_u(0) \leq 4 \k_u(0) \cdot \nu_u(0).
\end{equation}
\end{theorem}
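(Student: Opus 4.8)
The plan is to derive \eqref{int-0011} from Theorem~\ref{thm-int-000} by a limiting argument as the ball radius $R \to 0$. The residual mass $\tau_u(0)$ is, by definition, the limit (or decreasing limit) of $\frac{1}{\pi^2}\int_{B_R}(dd^c u)^2$ as $R \to 0$, after subtracting the Lelong-number contribution; more precisely, $\tau_u(0) = \lim_{R\to 0}\frac{1}{\pi^2}\int_{B_R}(dd^c u)^2 - (\text{something involving } \nu_u(0)^2)$, but in the zero-mass regime we simply track the full Monge-Amp\`ere mass on $B_R$. So the first move is to apply \eqref{int-000} and take $\limsup_{R\to 0}$ of both sides. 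On the left we get $\tau_u(0)$ (up to the normalization built into the definition of residual mass). On the right we must control $\limsup_{R\to 0}\, 4 L_A(u)\cdot\bigl(\fint_{S_R} u_t\bigr)$, so the real content is to identify each of the two factors in the limit.

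\textbf{Controlling the two factors.} By the very definition in the excerpt, $\k_u(0) = \lim_{r\to 0} L_{-\log r}(u) = \lim_{r\to 0}\|r u_r\|_{L^\infty(\overline B_r)}$, so the factor $L_A(u)$ tends to $\k_u(0)$ as $A = -\log R \to \infty$. For the spherical-average factor, I expect $\fint_{S_R} u_t\,$ to converge to the Lelong number $\nu_u(0)$. The standard fact here is that for a plurisubharmonic $u$ with $t = \log r$, the function $R \mapsto \fint_{S_R} u$ is convex in $t$, hence its $t$-derivative $\fint_{S_R} u_t$ is monotone nondecreasing in $t$ and its limit as $t \to -\infty$ (i.e.\ $R\to 0$) is exactly the Lelong number $\nu_u(0)$. (One should reconcile the precise normalization of $u_t$ versus $r u_r$: with $t=\log r$ one has $u_t = r u_r$, so the two quantities in the theorem are genuinely the same derivative, evaluated in $L^\infty$ versus in spherical average.) Passing to the product, the monotonicity/convexity gives that each factor has a genuine limit rather than merely a $\limsup$, so the product of the limits bounds the limit of the product, yielding $\tau_u(0) \le 4\,\k_u(0)\,\nu_u(0)$.

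\textbf{The regularization step.} The obstacle I anticipate is that Theorem~\ref{thm-int-000} is stated for $u \in \cP^2(B_1)$, i.e.\ $C^2$ outside the origin, whereas Theorem~\ref{thm-int-001} assumes only $u \in \cP(B_1)$ (merely plurisubharmonic) that is uniformly directional Lipschitz near $0$. So I would first regularize: take the standard decreasing sequence of smooth psh approximants $u_\ep = u * \rho_\ep$ on a slightly smaller ball. These are smooth (hence in $\cP^2$), the Monge-Amp\`ere measures $(dd^c u_\ep)^2$ converge weakly to $(dd^c u)^2$, the spherical averages $\fint_{S_R} (u_\ep)_t$ converge to $\fint_{S_R} u_t$, and — crucially — the uniform directional Lipschitz bound $\|r\partial_r u\|_{L^\infty}$ is preserved (indeed essentially non-increasing) under convolution, so $L_A(u_\ep) \le L_A(u) + o(1)$. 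Applying \eqref{int-000} to $u_\ep$, letting $\ep \to 0$ with $R$ fixed recovers \eqref{int-000} for $u$ itself, and then letting $R\to 0$ finishes the argument. The one technical point to handle with care is the interchange of the two limits ($\ep\to 0$ then $R\to 0$, versus controlling the boundary term $\fint_{S_R}u_t$ for the non-smooth $u$ on a fixed small sphere where $u$ is at least continuous and the derivative exists in the distributional/Lipschitz sense), but the monotonicity in $R$ makes this routine.
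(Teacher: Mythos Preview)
Your proposal is correct and follows essentially the same approach as the paper: regularize $u$ by convolution to land in $\cP^2$, apply Theorem~\ref{thm-int-000} to $u_\ep$, pass $\ep\to 0$ at fixed $R$ using the convergence of Monge--Amp\`ere masses (Lemma~\ref{lem-mt-001}--\ref{lem-mt-002}), of the spherical average of $u_t$ (Lemma~\ref{lem-mt-004}), and the stability $L_A(u_\ep)\le L_A(u)+C\ep$ (Lemma~\ref{lem-mt-005}), and then send $R\to 0$. Your hesitation about the definition of $\tau_u(0)$ is unnecessary---in this paper it is simply $\pi^{-2}\lim_{R\to 0}\int_{B_R}(dd^c u)^2$ with no subtracted term (see equation~\eqref{pre-000})---but this does not affect the argument.
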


It is clear that $\k_u(0)$ is finite if $u$ satisfies this continuity condition. 
As a direct consequence, 
the zero mass conjecture follows in this case,
see Corollary \ref{cor-mt-001}. 
As expected, 
all functions in $\cP(B_1)$ with circular symmetry 
satisfy this condition. 
However, 
there are still plenty of examples that fail to be uniformly directional Lipschitz continuous, 
see Example \ref{exm-001} and Example \ref{exm-002}.

Another approach
is to utilize the Fourier expansion along the $S^1$-direction on the alternating part of a function $u$. 
That is to say, we can write the function $v$ in equation \eqref{i-001} as 
\begin{equation}
\label{i-002}
 v \sim \sum_{k=1}^{\infty} \left( \cos k\theta \cdot v_k  +   \sin k\theta \cdot w_k  \right), 
\end{equation}
where $\theta$ is the argument in the $S^1$-direction, 
and $v_k$ and $w_k$ are circular symmetric functions. 
In fact, the $S^1$-invariant part $u_s$ exactly corresponds to 
the constant term in the above expansion. 

The simplest case in the Fourier expansion is that 
it only consists of a single frequency, e.g. 
$v = \cos k\theta \cdot v_k$
for some integer $k$.
Inspired by this fact, 
we introduce a new concept. 
That is, a function $u$ in $\cP(B_1)$ 
\emph{separates the $S^1$-direction} in the alternating part,
if equation \eqref{i-001} can be re-written as 
\begin{equation}
\label{int-003}
u = u_s + f(\theta) w,
\end{equation}
where $w$ is an $S^1$-invariant function, see Definition \ref{defn-sf-001}. 

However, 
the expansion in equation \eqref{i-002} really depends on the local coordinates. 
For this reason, equation \eqref{int-003} only needs to be satisfied on 
certain local trivialization of the Hopf-fibration,
called \emph{the proper complex Hopf-coordinates}. 
Note that these trivializations do not need to be $S^1$-invariant. 

With the aid of the strong maximal directional Lelong numbers, 
we prove that this kind of functions in fact satisfies the above continuity condition.



\begin{theorem}[Theorem \ref{cor-sf-001}]
Suppose a function $u\in \cP(B_1)$ separates the $S^1$-direction in its alternating part. 
Then it is uniformly directional Lipschitz continuous near the origin, 
and we have the estimate
$$  \tau_u(0) \leq 4 \k_u(0)\cdot \nu_u(0).  $$
In particular, the zero mass conjecture follows in this case. 
\end{theorem}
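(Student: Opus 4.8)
The plan is to reduce the statement to the single assertion that a function $u\in\cP(B_1)$ which separates the $S^1$-direction in its alternating part is uniformly directional Lipschitz continuous near the origin. Granting that, Theorem \ref{thm-int-001} immediately yields $\tau_u(0)\le 4\,\k_u(0)\,\nu_u(0)$; and since $S^1$-symmetrization preserves the Lelong number, $\nu_u(0)=\nu_{u_s}(0)$, so as soon as $\nu_u(0)=0$ the right-hand side vanishes (the quantity $\k_u(0)$ being finite by the Lipschitz property), i.e. $\tau_u(0)=0$, which is the zero mass conjecture in this class. Thus the whole content of the theorem lies in the \emph{a priori} Lipschitz bound, and a routine regularization $\cP(B_1)\rightsquigarrow\cP^2(B_1)$ lets us differentiate freely (Theorem \ref{thm-int-001} is already stated on $\cP(B_1)$, so no extra care is needed at the end).

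To produce the bound I would work in a proper complex Hopf-coordinate chart, where by hypothesis $u=u_s+f(\theta)\,w$ with $w$ an $S^1$-invariant function and $f$ a fixed profile on the circle (in the motivating single-frequency case $f=\cos k\theta$, cf. \eqref{i-002}). Since $\theta$ is a fibre coordinate, $\partial_t=r\partial_r$ does not act on $f(\theta)$, whence
\[
|r\,\partial_r u|\ \le\ |r\,\partial_r u_s|\ +\ \|f\|_{L^\infty(S^1)}\,|r\,\partial_r w|
\]
pointwise, and it suffices to bound each term uniformly on $\overline B_r$ as $r\to 0$. The first term is controlled by the finiteness of the maximal directional Lelong number of the circular-symmetric function $u_s$ — precisely the observation, recorded in the introduction, that the $S^1$-invariant part always has bounded $t$-derivative. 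Hence the crux is a uniform bound on $r\,\partial_r w=w_t$ for the alternating profile $w$.

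For $w$, which is \emph{not} plurisubharmonic by itself, I would exploit the positivity conditions extracted from the Sasakian decomposition of plurisubharmonicity (Proposition \ref{prop-ch-001}) applied to the \emph{coupled} function $u=u_s+f(\theta)w$. Since $u$ is plurisubharmonic for every value of $\theta$ while $u_s,w$ are $\theta$-independent, evaluating these conditions at the values of $\theta$ realizing $\max f$ and $\min f$ pins the transverse Hessian of $w$ along the $\bP^1$-base between those of $u_s$, while the fibre-direction inequality produces the modified, Bessel-type combination $f(\theta)\,w_{tt}+f''(\theta)\,w$ coming from a single Fourier mode; together these bound $w_{tt}$ and the base Hessian of $w$ in terms of the corresponding quantities for $u_s$. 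Feeding this into the mechanism behind Lemmas \ref{lem-cs-001} and \ref{lem-cs-002}, that is, invoking the strong maximal directional Lelong number of Definition \ref{def-pre-004} (which the separation hypothesis is designed to keep finite), gives $\|w_t\|_{L^\infty(\overline B_r)}\le C$ for small $r$. Combined with the symmetric estimate this shows $\k_u(0)<\infty$, i.e. uniform directional Lipschitz continuity, and Theorem \ref{thm-int-001} then closes the argument.

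The main obstacle is exactly this last conversion: turning the plurisubharmonicity of the coupled function $u_s+f(\theta)w$ into a quantitative, pointwise gradient bound on the single profile $w$, when $w$ alone carries no sign information and the constraint entangles the fibre direction with the curvature of the $\bP^1$-base. A further point requiring care is that the chart in which \eqref{int-003} holds is only a local trivialization of the Hopf fibration and need not be $S^1$-invariant, so one must verify that the resulting bound on $w_t$ — and hence the finiteness of $\k_u(0)$ — is independent of the chosen proper Hopf-coordinates.
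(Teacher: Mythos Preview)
Your overall reduction is right: the whole content is the bound on $r\partial_r w$ (the paper calls the profile $v$), and once that is in hand Theorem \ref{thm-mt-001} closes the argument exactly as you say. The gap is in the mechanism you propose for that bound. You try to extract a pointwise gradient estimate on $w$ from the full Sasakian Hessian $S(u)$ of Proposition \ref{prop-ch-001}, evoking a Bessel-type relation $f(\theta)w_{tt}+f''(\theta)w$ and then integrating. This runs into two concrete problems: first, $f$ is only assumed continuous in Definition \ref{defn-sf-001}, so $f''$ is not available and the ``fibre-direction inequality'' you invoke does not produce the combination you write; second, even in the smooth case, controlling $w_{tt}$ and the base Hessian of $w$ by those of $u_s$ does not by itself give a uniform $L^\infty$ bound on $w_t$ near $t=-\infty$ without an additional anchoring estimate, and you do not supply one.

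The paper bypasses the Sasakian Hessian entirely and uses only one-dimensional subharmonicity along each complex line. The trick you are missing is to take the fibrewise maximum $\hat u(t,\zeta)=\sup_\theta u$ and observe the elementary algebraic identity
\[
\hat u - u_s \;=\; \sup_\theta\bigl(f(\theta)\,v\bigr)\;=\;\tfrac{K+k}{2}\,|v|\;+\;\tfrac{K-k}{2}\,v,
\]
where $K=\sup f$ and $k=-\inf f$. Since $\hat u$ is convex in $t$ (being $V_{u|_{\ell_\zeta}}$), its $t$-derivative is controlled by the strong maximal directional Lelong number $N_A(u)$, which is finite by Lemma \ref{lem-gc-001}; and $\partial_t u_s$ is controlled by $M_A(u)$. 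Differentiating the identity in $t$ (using that $v,|v|\in W^{1,2}_{loc}$) gives $K\,v_t$ on $\{v>0\}$ and $-k\,v_t$ on $\{v<0\}$, hence a two-sided $L^\infty$ bound on $v_t$ directly in terms of $M_A$ and $N_A$. No second derivatives, no Sasakian matrix, and no smoothness of $f$ are needed.
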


The next step is to consider a function $u$ 
that has only finite many terms in the Fourier expansion in its alternating part, 
i.e. the function $v$ in equation \eqref{i-002} has finite frequencies. 
Then we conjecture that a similar upper-bound estimate on the residual mass should be found,  
and hence the zero mass conjecture holds for such kind of functions,
see Conjecture \ref{conj-001}.

In a very recent work \cite{DLLWZ}, 
the upper-bound estimate in Theorem \ref{thm-int-001}
is enhanced to all higher dimensions, 
and the coefficients on the R.H.S. of equation \eqref{int-0011}
can be fixed to be $1$ in any dimension. 
However, 
it is still an interesting question to ask 
if we can obtain any $L^1$-apriori estimate on the complex Monge-Amp\`ere operators, 
as in Theorem \ref{thm-int-000}, in all higher dimensions.

\bigskip 

\textbf{Acknowledgment: }
The authors are very grateful to Prof. Xiuxiong Chen for his continuous encouragement and support 
in mathematics. It is also a great pleasure to thank 
Song Sun, Gao Chen, Per \AA hag, Haozhao Li, Wei Sun and Mingchen Xia for lots of useful discussions. 

Finally, we would like to thank 
Prof. Berndtsson and Prof. Rashkovskii for their kind reading on the draft of this paper and valuable suggestions.

\bigskip

\section{The maximal directional Lelong numbers}
\smallskip

Let $D$ be a domain in $\bC^{n+1}$ that contains the origin, 
and $D^*: = D -\{0 \}$ the punctured domain.
A plurisubharmonic function $u$ on $D$ 
is an upper semi-continuous function such that its restriction to each 
complex line in $D$ is subharmonic. 
In the following, 
we are going to consider a sub-collection of the family of all plurisubharmonic functions on $D$.

\begin{defn}
\label{def-pre-001}
A plurisubharmonic function on $D$ is in the family $\cP(D)$, if it is locally finite on $D^*$.
\end{defn}

In order to perform calculus on $D^*$, 
we further introduce the following family with better regularities.

\begin{defn}
\label{def-pre-002}
A plurisubharmonic function on $D$ is in the family $\cP^{2}(D)$, if it is $C^2$-continuous on $D^*$.
\end{defn}

It is clear that we have the inclusion $\cP^{2}(D)\subset \cP(D)$. 
Suppose a function $u$ is in the family $\cP(D)$.
Thanks to the upper semi-continuity, 
we further assume the normalization condition 
$$\sup_D u = -1.$$

Moreover, 
the \emph{unbounded locus} of a plurisubharmonic function
is the set of points that it is never bounded near these points. 
In other words, 
the unbounded locus of 
a $u\in\cP(D)$ is either empty or the origin.

Thanks to the Demailly-Bedford-Talyor product,   
the complex Monge-Amp\`ere operator 
is therefore well defined as a positive Borel measure, 
see \cite{BT0}, \cite{BT}, \cite{Dem93}. 
Then we have on the domain $D$ the following operator: 
$$ \mbox{MA}(u): = (dd^c u)^{n+1}, $$  
where we have used the normalized operator 
\begin{equation}
 d^c: = \frac{i}{2} (\dbar - \d), \ \ \ \mbox{and} \ \ \ d: = \d + \dbar. 
\end{equation}

Let $B_R$ be the ball centered at the origin with radius $0<R<1$, 
and then the residual Monge-Amp\`ere mass of $u$ is defined as a measure
\begin{eqnarray}
\label{pre-000}
\tau_u(0): &=& \frac{1}{ \pi^{n+1}} \mbox{MA}(u) (\{ 0\})
\nonumber\\
&=&  \frac{1}{\pi^{n+1}}  \lim_{R\rightarrow 0} \int_{B_R} (dd^c u)^{n+1}.
\end{eqnarray}

\subsection{The circular symmetry}
From now on,
we will take the domain $D$ to be the unit ball $B_1\subset \bC^{n+1}$.
Then a function $u$ is said to be \emph{circular symmetric} or \emph{$S^1$-invariant}, 
if it satisfies
$$ u(z): = u(e^{i\theta}z), $$
for all $\theta\in \bR$ and $z\in B_1$.
If a function $u\in\cP(B_1)$ is circular symmetric, 
then we say that it is in the family $\cF(B_1)$. 
For more details on circular symmetric plurisubharmonic functions, see \cite{BB}, \cite{Li19}.


The Lelong number (at the origin) 
of a plurisubharmonic function $u$ 
is defined via its average or maximal on the sphere $S_r: = \d B_r$, 
i.e. we take   
$$ S_u(r): = \frac{1}{a_{2n+1}} \int_{|\xi|=1} u(r\xi) d\sigma(\xi), \ \ \ $$
where $a_{2n+1}$ is the volume of the unit sphere, and 
$$ V_u(r): = \sup_{|\xi|=1} u(r\xi).  $$
It is a well known fact \cite{GZ} that $S_u$ and $V_u$ are both non-decreasing and convex function of 
the variable $t:= \log r \in (-\infty, 0)$, 
and then the Lelong number is
\begin{equation}
\label{pre-0001}
\nu_u (0): = \lim_{r\rightarrow 0^+} r\d_r^- S_u (r) =  \lim_{r\rightarrow 0^+} r\d_r^- V_u (r),
\end{equation}
and there is no difference to use left or right derivatives in the above equation. 

Furthermore, 
we can discuss the so called \emph{directional Lelong numbers} as follows. 
First, we note that all the complex directions in $\bC^{n+1}$ can be parametrized by points 
in the complex projective space $\bC\bP^n$, via the famous Hopf-fibration as 
\\

\begin{tikzcd}
\ \ \ \ \ \ \ \ \ \ \ \ \ \ \ \ \ \ \ \ \ \ \ \ \ \ \ \ \ S^1 \arrow[r, hook] & S^{2n+1} \arrow[r, "p"] & \bC\bP^n. 
\end{tikzcd}
\\

Fixing a point $\z\in \bC\bP^n$,
a complex line $\l_{\z}$ through the origin can actually be written as 
$$\l_{\z}: = \{\lambda\in \bC \ |  \ \  \lambda\cdot [\z] \ \}, $$
where $[\z]$ is its homogeneous coordinate in $(\bC^{n+1})^*$. 
Then the directional Lelong number (in the complex direction $\z$) is 
the Lelong number $\nu_{u|_{\l_\z}}(0)$ 
at the origin of the restriction $u|_{\l_\z}$, 
and we define it to be infinity, if the restriction is identically $-\infty$. 

It is also convenient to introduce a parametrization $(r, \theta, \z, \bar\z)$
of the space $(\bR^{2n+2})^*\cong (\bC^{n+1})^*$ induced by the fiber map $p$ of the Hopf-fibration. 
Then a function $u\in\cP(B_1)$ can be written as 
$$ u(z): = u(r, \theta, \z, \bar\z). $$
Here $r: = |z|$ is the radius function, and $\theta$ denotes the angle in the $S^1$-direction. 
More precisely, 
the three variables $(\theta, \z, \bar\z)$ together gives a local embedding of the unit sphere $S^3$,  
and a local trivialization like this is called a \emph{complex Hopf-coordinate}, see \cite{Li23}, \cite{HLX} for more details.  


Take a change of variables as $t:=\log r$. 
The maximal directional Lelong number $M_A(u)$ 
of a function $u\in \cF(B_1)$ at a distance $A>0$ is defined as 
\begin{equation}
 M_A(u): = \sup_{\z\in \bC\bP^n} \d^+_t u (\z)|_{t = -A}.   
 \end{equation}
Thanks to the log-convexity and non-decreasing properties of $u|_{\l_{\z}}$,
the number $M_A$ is certainly non-increasing in $A$. 
Moreover, it is proved in \cite{Li23} that 
it is always a non-negative real number. 
Then we can define the \emph{maximal directional Lelong number} $\lambda_u(0)$
of the function $u$ at the origin as the  limit: 
\begin{equation}
\lambda_u(0): = \lim_{A\rightarrow \infty} M_A(u), 
\end{equation}
and it must also be a non-negative real number. 
Moreover, the number $M_A(u)$ controls the variation w.r.t. the radius $r$ 
of the infimum of $u$ on the sphere $S_r$, see \cite{HLX}. 

A natural question is when 
this maximal directional Lelong 
number is equal to zero.  
For instance, we have the following two results. 

\begin{lemma}
\label{lem-cs-001}
For a function $u\in \cF(B_1)$, we have 
$$  u(0) > -\infty \Rightarrow \lambda_u(0) =0. $$
\end{lemma}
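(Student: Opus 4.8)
The plan is to reduce the statement to an elementary convexity estimate for the one–variable profile attached to each complex direction, and to extract the required uniformity directly from the normalization $\sup_{B_1}u=-1$. Fix $\z\in\bC\bP^n$ together with a unit homogeneous representative $[\z]\in(\bC^{n+1})^*$, and set $g_\z(t):=u(e^t[\z])$ for $t\in(-\infty,0)$. Since $u$ is circular symmetric, $g_\z$ does not depend on the chosen representative nor on the $S^1$–angle, so it is precisely the profile of the restriction $u|_{\l_\z}$ in the variable $t=\log r$; by the theory recalled above it is convex and non-decreasing in $t$, the quantity $\d^+_t u(\z)|_{t=-A}$ is its right derivative at $t=-A$, and its limit as $A\to\infty$ is the directional Lelong number $\nu_{u|_{\l_\z}}(0)$.

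The first step I would carry out is a two–sided pinching of $g_\z$ on $(-\infty,0)$. The upper bound is immediate from the normalization: $g_\z(t)=u(e^t[\z])\le\sup_{B_1}u=-1$ for all $t<0$. For the lower bound I would invoke the sub–mean value inequality for the subharmonic function $u|_{\l_\z}$ on the disc, combined with circular symmetry: the mean of $u$ over the circle $\{|\lambda|=e^t\}\cdot[\z]\subset\l_\z$ equals the constant value $g_\z(t)$, because $u$ is constant along the $S^1$–orbits; hence $u(0)\le g_\z(t)$ for every $t<0$. In particular $u(0)\le-1$, so under the hypothesis $u(0)>-\infty$ the number $-1-u(0)$ is finite and non-negative.

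The second step is the convexity estimate, which is where uniformity enters. Since $g_\z$ is convex, its right derivative at $-A$ is dominated by any forward difference quotient; taking the node at $-A/2$ gives
\begin{equation*}
\d^+_t u(\z)\big|_{t=-A}\ \le\ \frac{g_\z(-A/2)-g_\z(-A)}{A/2}\ \le\ \frac{(-1)-u(0)}{A/2}\ =\ \frac{2\bigl(-1-u(0)\bigr)}{A}.
\end{equation*}
The right-hand side is independent of $\z$, so taking the supremum over $\bC\bP^n$ yields $M_A(u)\le 2\bigl(-1-u(0)\bigr)/A$, and letting $A\to\infty$ forces $\lambda_u(0)=\lim_{A\to\infty}M_A(u)=0$.

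The only delicate point — and really the heart of the argument — is the lower bound $u(0)\le g_\z(t)$: this is exactly where circular symmetry is indispensable, since it converts the circle-average produced by the sub-mean value property into an honest pointwise lower bound for the profile $g_\z$. Everything else is routine manipulation of convex functions, and no uniformity beyond the explicit estimate displayed above is needed; in particular the finiteness of $u(0)$ is used only to guarantee that the constant $2\bigl(-1-u(0)\bigr)$ on the right-hand side is finite.
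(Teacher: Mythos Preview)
Your proof is correct and uses essentially the same ingredients as the paper: the lower bound $g_\z(t)\ge u(0)$ from circular symmetry plus the sub-mean value property, the upper bound $g_\z(t)\le -1$ from the normalization, and convexity of the one-variable profile. The only difference is cosmetic: the paper argues by contradiction, supposing $M_A(u)>2\delta$ for all $A$ and showing that the supporting line $y_i(x)=\delta(x+A_i)+u(0)$ forces $g_{\z_i}(-1)>-1$, whereas you give the equivalent direct estimate $M_A(u)\le 2(-1-u(0))/A$ via a forward difference quotient; your version is arguably cleaner since it yields an explicit rate.
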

\begin{proof}
First we note that $u(z)\geq u(0)$ 
for all $z\in B_1$, since $u|_{\l_{\z}}$ is subharmoic and $S^1$-invariant
for each $\z\in \bC\bP^n$ fixed. 

Suppose not. 
Then there exists a real number $\delta >0$,
such that we have $M_A (u) > 2\delta$ for all $A>0$. 
Take a sequence $A_i \rightarrow +\infty$, 
and points $\z_i \in \bC\bP^n$ with the following condition: 
$$ \d_t^+ u(\z_i) |_{t = -A_i} > \delta.  $$
Consider the straight line 
$$ y_i(x): = \delta (x + A_i ) + u(0)  $$
It is clear that the convex curve $u|_{\l_{\z_i}}$ in $t$ is always above the line $y_i$, 
but then we have $y_i (-1) >0 $ for each $A_i$ large enough. 
This contradicts our normalization condition $\sup_{B_1} u = -1$.

\end{proof}

If a plurisubharmonic function is finite at a point, 
then its Lelong number is zero at the same point.  
Then we can further ask what happens to its maximal directional Lelong number, 
and a partial result is obtained.

\begin{lemma}
\label{lem-cs-002}
Suppose $u\in \cF(B_1)$ is continuous on $B^*_1$. 
Then we have 
$$ \nu_u(0) =0 \Rightarrow \lambda_u(0) =0.  $$
\end{lemma}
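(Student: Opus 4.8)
The plan is to argue by contradiction: first produce a single complex direction along which $u$ decays at least linearly in $t=\log r$, and then contradict the \emph{sublinear} decay of the spherical average that $\nu_u(0)=0$ imposes.

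First, reductions. For each $\z\in\bC\bP^n$ the slice $u(\z)$, read as a function of $t=\log r$, is finite, continuous, convex and non-decreasing on $(-\infty,0)$: finiteness and continuity because $u\in\cP(B_1)$ is continuous on $B_1^*$, and convexity and monotonicity because $u|_{\l_\z}$ is subharmonic and $S^1$-invariant. Writing $\nu_\z:=\lim_{t\to-\infty}\d_t^+u(\z)$ for the directional Lelong number, the spherical average is the $\mu$-average of the slices, $S_u=\int_{\bC\bP^n}u(\z)\,d\mu(\z)$ with $\mu=p_*\sigma$; differentiating under the integral (legitimate since each slice is convex, so its $t$-derivatives are monotone and dominated by the finite $S_u'$) and then letting $t\to-\infty$ by monotone convergence gives
\begin{equation*}
\nu_u(0)=\int_{\bC\bP^n}\nu_\z\,d\mu(\z),
\end{equation*}
so $\nu_u(0)=0$ forces $\nu_\z=0$ for $\mu$-a.e.\ $\z$. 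Moreover $\lim_{t\to-\infty}S_u(t)=u(0)$ for a plurisubharmonic $u$, so if $u(0)>-\infty$ we are already done by Lemma \ref{lem-cs-001}; we may therefore assume $u(0)=-\infty$, in which case $S_u(t)\to-\infty$ while $S_u(t)=o(-t)$ as $t\to-\infty$.

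Next, suppose toward a contradiction that $\lambda_u(0)=2\delta>0$. Since $M_A(u)$ is non-increasing in $A$, $M_A(u)\ge 2\delta$ for every $A>0$, so we may pick directions $\z_A\in\bC\bP^n$ with $\d_t^+u(\z_A)|_{t=-A}>\delta$, and by compactness of $\bC\bP^n$ a sequence $A_j\to\infty$ with $\z_{A_j}\to\z_\infty$. Continuity of $u$ on $B_1^*$ makes the slices $u(\z_{A_j})$ converge to $u(\z_\infty)$ uniformly on compact $t$-intervals, and since by convexity each $u(\z_{A_j})$ has $t$-slope $\ge\delta$ throughout $[-A_j,0]$, the limit satisfies $\d_t^+u(\z_\infty)\ge\delta$ on all of $(-\infty,0)$. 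In particular $\nu_{\z_\infty}\ge\delta$, and since $\lim_{t\to0^-}u(\z_\infty)\le\sup_{B_1}u=-1$ we obtain
\begin{equation*}
u(\z_\infty)\big|_t\le -1+\delta t,\qquad t<0 ,
\end{equation*}
i.e.\ $u\to-\infty$ at least linearly in $t$ along the complex line $\l_{\z_\infty}$.

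The remaining step — the one carrying the real content — is to promote this one-directional linear decay to a genuine linear decay of $S_u$, in contradiction with $S_u(t)=o(-t)$. The mechanism I expect to use is a spreading (Harnack-type) estimate built on the Sasakian description of plurisubharmonicity in Proposition \ref{prop-ch-001}: for each fixed $t$ the trace of $u$ on the unit sphere obeys a differential inequality coupling its horizontal (CR) Laplacian with $\d_t u$ and $\d_t^2 u$, and feeding the ray bound $u(\z_\infty)|_t\le-1+\delta t$ into it — starting the estimate on the small sphere $S_{e^{-A}}$, where continuity of $u$ is available — should propagate control from a neighborhood of $\z_\infty$ to a subset of $\bC\bP^n$ of $\mu$-measure bounded below independently of $A$; integrating then yields $S_u(-A)\le -c_0A+C$ for some $c_0>0$, the desired contradiction. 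The main obstacle is precisely this propagation: mere continuity of $u$ on $B_1^*$ carries no modulus that is uniform as $r\to0$, so one must exploit plurisubharmonicity quantitatively — through the sub-mean-value property along the CR/Sasakian directions together with the circular symmetry — to keep the bad set on $S_{e^{-A}}$ from collapsing to $\mu$-measure zero. Once that is in place, the three steps combine to give $\lambda_u(0)=0$.
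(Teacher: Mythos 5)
Your reductions and the extraction of the limit direction $\z_\infty$ with $u|_{\l_{\z_\infty}}\le -1+\delta t$ are correct (the compactness-plus-convexity argument is fine), but the proof is not complete: the final "propagation" step, which you yourself describe as something you "expect" to work, is exactly where the content lies, and as designed it cannot work. Consider $u=\max\bigl\{-(-\log|z^0|^2)^{1/2},\,\log|z^1|\bigr\}$, a circular-symmetric hybrid of the paper's Example \ref{exm-003} and Example \ref{exm-001}: it is plurisubharmonic, $S^1$-invariant and continuous on $B_1^*$, it decays exactly linearly in $t$ along the single line $\{z^0=0\}$, yet $S_u(t)\ge -(-2t)^{1/2}-C=o(-t)$ because the first branch dominates on all but a small-measure portion of each sphere $S_r$. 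So linear decay along one ray forces no linear decay of the spherical mean, and the contradiction with $S_u(t)=o(-t)$ that you are aiming for is unreachable by any Harnack-type spreading estimate. The root of the problem is that your integral identity $\nu_u(0)=\int_{\bC\bP^n}\nu_\z\,d\mu$ only yields $\nu_\z=0$ for $\mu$-a.e.\ $\z$, while your contradiction scheme needs it for \emph{every} $\z$; the exceptional set (in general a pluripolar set of directions, which can be nonempty) is precisely where your $\z_\infty$ can hide.

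The paper's own proof is structured quite differently and avoids any contradiction or averaging argument: it first asserts that $\nu_u(0)=0$ forces $\nu_{u|_{\l_\z}}(0)=0$ for \emph{every} direction $\z$, then applies Dini's theorem to the slope functions $\phi_t(\z)=t^{-1}u(e^t,\z,\bar\z)$ --- continuous in $\z$ by the hypothesis on $u$, monotone in $t$, converging pointwise to the continuous limit $0$ on the compact space $\bC\bP^n$ --- to upgrade pointwise to uniform convergence, and concludes with the convexity inequality $\phi_{-A}\ge\d_t^+u|_{t=-A}$, whence $M_A(u)\le\sup_\z\phi_{-A}\to 0$. If you rework your argument, you should isolate and justify that "for every $\z$" input rather than try to circumvent it by averaging: the example above shows that the a.e.\ statement you derive is strictly weaker, and that ruling out exceptional directions is the actual crux of the lemma.
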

\begin{proof}
If $u$ has the zero Lelong number at the origin, 
then it is a well known fact that each directional Lelong number $\nu_{u|_{\l_{\z}}} (0)$ 
is zero for all $\z\in \bC\bP^n$. 
Then we can consider the following slope function:
\begin{equation}
\label{cs-001}
\phi_t (\z):=           t^{-1} u(e^t, \z, \bar\z)
\end{equation} 
This is a positive continuous function. 
Moreover, 
it is also non-decreasing and convex in $t$. 
Hence, for each $\z$ fixed,  we can take its decreasing limit as   
\begin{equation}
\label{cs-002}
\lim_{t \rightarrow -\infty} \phi_t (\z) = \nu_{u|_{\l_{\z}}} (0) =0. 
\end{equation} 
Then Dini's lemma implies that the convergence of $\phi_t$
is uniform. However, we also have the following control due to the convexity: 
\begin{equation}
\label{cs-003}
\phi_t = \frac{ 0 - u(e^t, \cdot)}{ 0 -t } \geq  \frac{ u(1, \cdot) - u(e^t, \cdot) }{0 -t} \geq \d^+_t u. 
\end{equation} 
In other words, we have 
$$ \sup_{\z\in \bC\bP^n} \phi_{-A} (\z) \geq  M_A (u), $$
and then our result follows. 
\end{proof}

In fact, this phenomenon in Lemma \ref{lem-cs-002} appears in a wider classes. 
\begin{example}
\label{exm-003}
Consider the following function in $\bC^2$ as 
\begin{equation}
\label{ex-017}
u(z^0,z^1):= -\left( -\log |z^0|^2 \right)^{1/2}. 
\end{equation}
Here $u(z)$ is a plurisubharmonic function near the origin, 
and its Monge-Amp\`ere measure is well defined in Cegrell's sense. 
It is also well known that its Lelong number and residual mass are both zero at the origin. 
Moreover, we can compute as follows:
\begin{equation}
\label{ex-018}
r\d_r u(z) = \frac{1}{\left( -\log |z^0|^2 \right)^{1/2}.  }
\end{equation}
It follows that we have $M_A(u) = A^{-1/2}$ for each $A$ large, 
and hence its maximal directional Lelong number $\lambda_u(0)$ is also zero. 
\end{example}

\subsection{The general case}
Comparing with the circular symmetric case, 
we can ask the question if there is any reasonable way 
to define the maximal directional Lelong number for a general $u\in \cP(B_1)$. 
One way is to decompose this function into 
$$ u: = u_s + v, $$
where $u_s$ is $S^1$-symmetrization of $u$, 
and it can be written as 
$$ u_s := \frac{1}{2\pi}\int_{S^1} u (e^{i\theta} z) d\theta.  $$
As the average of a plurisubharmonic function, 
the function $u_s$ itself is plurisubharmonic. 
In fact, it is clear that it is also in the family $\cP(B_1)$. 

Moreover, the function $v$ is the alternating part of $u$, i.e. 
we have 
$$ \int_{S^1} v (e^{i\theta} z) d\theta =0.  $$
Then we can define the maximal directional Lelong numbers as 
$$ M_A(u): = M_A(u_s); \ \ \ \mbox{and} \ \ \ \lambda_u(0): = \lambda_{u_s}(0).  $$

However, in order to understand the alternating part, 
we need more informations this time. 
Inspired from Lemma \ref{lem-cs-002},
we can consider the following function: 
\begin{equation}
\label{gc-000}
 \phi_t (\z): = \phi(t, \z)= \frac{\sup_{S^1} ( u|_{\l_{\z}} ) } {\log r},
 \end{equation}
for each $\z\in \bC\bP^n$ and all $t = \log r <0$. 
In other words, this is the slope of the supreme of the subharmonic function $u|_{\l_\z}$
over the $S^1$-direction, and then it can be rewritten under the complex Hopf-coordinate as 
\begin{equation}
\label{gc-001}
\phi_t(\z) =  t^{-1} \hat u (t, \z), 
\end{equation} 
where we take  
\begin{equation}
\label{gc-0011}
\hat u (t, \z): = \sup_{\theta} u(e^t, \theta, \z, \bar\z). 
\end{equation}

Then it is a standard fact for subharmonic functions 
that $\hat u$ is a non-decreasing and convex function of $t$, 
and the limit of $\phi_t$ is exactly the directional Lelong number, i.e. we have 
\begin{equation}
\label{gc-002}
\nu_{u|_{\l_\z}}(0) = \lim_{t\rightarrow -\infty} \phi_t(\z).
\end{equation} 
Therefore, it is reasonable to introduce the following concept.

\begin{defn}
\label{def-pre-004}
For a function $u\in \cP(B_1)$, 
the strong maximal directional Lelong number of $u$
at a distance $A>0$ is defined as 
$$ N_A(u): =  \sup_{\z\in \bC\bP^n} \phi_{-A} (\z).$$
\end{defn}
In priori, this number could be infinitely large. 
However, it is not difficult to see that it must be finite for all $A$ positive. 

\begin{lemma}
\label{lem-gc-001}
For a function $u\in \cP(B_1)$, we have for all $A$ positive
$$ 0\leq N_A(u)<  +\infty.  $$
\end{lemma}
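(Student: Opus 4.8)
The plan is to prove the two-sided bound by establishing the lower bound and the upper bound separately, using the structural facts about $\hat u$ already recalled in the excerpt. First I would settle the lower bound $N_A(u) \geq 0$: for each $\z \in \bC\bP^n$ the function $\hat u(t,\z) = \sup_\theta u(e^t, \theta, \z, \bar\z)$ is the supremum over $\theta$ of the subharmonic function $u|_{\l_\z}$ along concentric circles, hence is non-decreasing and convex in $t$, and by the normalization $\sup_{B_1} u = -1$ we have $\hat u(t,\z) \leq -1 < 0$ for all $t < 0$. Dividing by $t < 0$ gives $\phi_{-A}(\z) = (-A)^{-1}\hat u(-A,\z) > 0$, so taking the supremum over $\z$ yields $N_A(u) \geq 0$ (indeed strictly positive).

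For the upper bound I would exploit convexity and the fact that $u_s$, being an $S^1$-average, controls things from below. The key inequality is the one appearing in \eqref{cs-003}: convexity of $\hat u(\cdot,\z)$ on $(-\infty,0)$ together with $\hat u(0^-,\z) \leq -1$ gives, for $t = -A < 0$,
\begin{equation}
\phi_{-A}(\z) = \frac{-\hat u(-A,\z)}{A} \leq \frac{\hat u(-A/2,\z) - \hat u(-A,\z)}{A/2} \cdot \frac{1}{2} + \text{(slope terms)},
\end{equation}
but more directly, since $\hat u$ is convex and bounded above by $-1$ near $t = 0$, the chord from $(-A, \hat u(-A,\z))$ to $(0^-, -1)$ has non-negative slope, so $\hat u(-A,\z) \leq -1$ forces $-\hat u(-A,\z)/A \leq (\,\hat u(-A/2, \z) - \hat u(-A, \z)\,)/(A/2)$. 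The real content is that $\hat u(-A/2, \z)$ is bounded \emph{above} uniformly in $\z$: indeed $\hat u(-A/2,\z) \leq \sup_{|z| = e^{-A/2}} u(z)$, which is a finite number independent of $\z$ because $u$ is locally finite — hence locally bounded above, being plurisubharmonic — on the compact sphere $S_{e^{-A/2}} \subset B_1^*$. Combining, $N_A(u) \leq \frac{2}{A}\big(\sup_{S_{e^{-A/2}}} u - \inf_\z \hat u(-A,\z)\big)$, and the remaining task is to bound $\inf_\z \hat u(-A,\z)$ from below.

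For that lower bound on $\hat u(-A,\cdot)$ I would again use that $u$ is locally bounded \emph{below} away from the origin: since $u \in \cP(B_1)$ is finite (hence locally bounded) on $B_1^*$, on the closed sphere $S_{e^{-A}}$ we have $u \geq m_A$ for some finite constant $m_A$, so $\hat u(-A,\z) = \sup_\theta u(e^{-A},\theta,\z,\bar\z) \geq m_A$ for every $\z$. Putting the pieces together gives $N_A(u) \leq \frac{1}{A}(\,-m_A\,)< +\infty$ directly (using $\hat u(-A,\z) \geq m_A$ and $\hat u(-A,\z) < 0$ so that $0 < -\hat u(-A,\z)/A \leq -m_A/A$), which is even cleaner than the two-term estimate. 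So in fact the whole argument reduces to: $\hat u(-A,\cdot)$ is squeezed between the finite constant $m_A = \inf_{S_{e^{-A}}} u$ and $-1$, and dividing by $-A$ transfers this to $\phi_{-A}$.

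The main obstacle, such as it is, is purely a matter of justifying the two ``local boundedness'' inputs rigorously: that a function in $\cP(B_1)$ is bounded above and below on each compact subset of $B_1^*$. Boundedness above is automatic for plurisubharmonic functions (they are bounded above on compacta), while boundedness below on compacta of $B_1^*$ is exactly the hypothesis that $u$ is \emph{locally finite} on $D^* = B_1^*$ in Definition \ref{def-pre-001} — this is the point where the ``isolated unbounded locus'' assumption is genuinely used, since without it $\hat u(-A,\z)$ could dive to $-\infty$ along some direction and $N_A$ would be $+\infty$. I expect no computational difficulty; the proof is a few lines once one records that $\hat u$ is convex, non-decreasing, negative, and pinched between two finite sphere-extrema of $u$.
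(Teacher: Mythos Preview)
Your proposal is correct and, after the detour through convexity, lands on exactly the paper's argument: bound $\hat u(-A,\z)$ from below by $\inf_{S_{e^{-A}}} u$ (finite since $u$ is locally bounded on $B_1^*$) and divide by $-A$. The paper adds the remark that $N_A$ is non-increasing in $A$ (so it suffices to bound a single $N_{A_0}$), but this is inessential and your direct bound for every $A$ is equally valid.
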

\begin{proof}
The function $\phi_t$ is non-decreasing in $t$, 
and then the number $N_A$ is non-increasing in $A$. 
Hence
it is enough to prove the finiteness 
for a certain positive number $A_0$. 
This is because our function $u$ is locally bounded 
on the punctured ball $B^*_1$, 
and then there is a pointwise lower bound $C_{A_0}$ of $u$ on the sphere  
such that we have 
$$ u|_{S_{R_0}} \geq -C_{A_0}, $$
where $R_0:= e^{-A_0}$.
It follows that we further have 
\begin{equation}
\label{gc-003}
 \phi_{-A_0}(\z) \leq  \frac{ C_{A_0} }{ A_0},
\end{equation} 
for all $\z\in \bC\bP^n$. Then our result follows. 

\end{proof}

Thanks to the convexity of $\hat u$ in $t$ again, 
we can further obtain the following estimate as in equation \eqref{cs-003}: 
\begin{equation}
\label{gc-004}
N_A (u) \geq \sup_{\z\in \bC\bP^n}  \d^+_t \hat u|_{t= -A}.
\end{equation} 
This also implies that the R.H.S. of equation \eqref{gc-004}
is also finite for all $A$ positive. 
Finally, 
we further introduce the following concept
for a later purpose.

\begin{defn}
\label{def-pre-003}
A function $u\in \cP(B_1)$ is said to be uniformly directional Lipschitz continuous near the origin,
if the derivative $ ru_r$ is an $L^{\infty}$-function in a smaller ball, i.e. 
we have 
$$ L_A(u): = || ru_r ||_{L^{\infty}(\overline{B}_R)} < +\infty, $$
for some large constant $A: = -\log R$. 
\end{defn}

It is clear that the function $L_A$ is non-increasing in $A$,
and hence we can also define the directional Lipschitz constant 
of $u$ at the origin as
$$ \k_u(0): = \lim_{A\rightarrow +\infty}  L_A (u) \in [0 +\infty]. $$
In other words, the non-negative number $\k_u(0)$ is finite, if $u$ 
is uniformly directional Lipschitz continuous near the origin. 
When the function $u$ is furthermore $S^1$-invariant, 
we can see that this definition boils down to the former one:
\begin{equation}
\label{gc-0050}
 L_A(u) = \sup_{A\leq B} M_B(u) = M_A(u), 
 \end{equation}
since $M_A(u)$ is non-increasing in $A$.
Hence $\k_u(0)$ is exactly the maximal directional Lelong number $\lambda_u(0)$ in this case.
That is, a function $u$ in $\cF(B_1)$ is always uniformly directional Lipschitz continuous.

However, this number is not always finite for the general case. 
For instance, consider the following function in the unit disk in $\bC$: 
\begin{equation}
\label{gc-005}
f(z): = \sum_{k=1}^{\infty} \frac{1}{k^2} \log \left( | z - k^{-1} |^2 + e^{-k^5} \right). 
\end{equation}

This subharmonic function is well-known because it is finite everywhere,
but its unbounded locus is the origin, and hence is discontinuous there. 
Moreover, we note that 
its derivative $ru_r$ grows like 
$r^{-3}$ on the real axis towards the origin.  
Hence it is not uniformly directional Lipschitz continuous. 
In fact, 
we can also use it to construct more examples in higher dimensions. 

\begin{example}
\label{exm-001}
The following construction is inspired from Example 4.6, \cite{ACH15}. 
Let $z: = (z^0, z^1)$ be the complex rectangle coordinate on $\bC^2$, 
and then we define 
\begin{equation}
\label{ex-001}
u(z): = \max\left\{ f(z^0), \log |z^1|    \right\}.
\end{equation}

This function $u$  is plurisubharmonic in the unit ball. 
It is everywhere finite, and its unbounded locus is also the origin. 
Hence it is in the family $\cP(B_1)$. 
However,
its restriction to the complex line $\{ z^1=0 \}$ is exactly the function $f$.  
Therefore, it is not uniformly directional Lipschitz continuous near the origin.



\end{example}

Finally, we note that there is a technique \cite{ACH19}  to 
replace our function by a larger pluri-complex Green function 
with the same residual mass. 
In fact, given a general function $u\in \cP(B_1)$, 
we can take its envelope as 
 $$ u^r: = \sup \left\{  v\in \cP(B_1); \ \ v\leq u \  \mbox{on} \   B_r \right\}, $$
and then the sequence $\{ u^r \}$ increases pointwise to a function $\tilde{u}\in \cP(B_1)$. 
It is clear that we have $\tilde{u} \geq u$,
and hence $\nu_{\tilde{u}}(0)=0$ due to Demailly's comparison theorem. 
Moreover, it satisfies 
$$ (dd^c \tilde{u})^n = \tau_u(0) \delta_0. $$

Therefore, it is interesting to ask whether a pluri-complex Green function in $\cP(B_1)$
is uniformly directional Lipschitz continuous or not.

\bigskip

\section{The lower dimensional case}
\smallskip

In this section,
we will study the residual Monge-Amp\`ere mass 
of a function $u\in \cP^{2}(B_1)$ in complex dimension two, 
i.e. we assume $n=1$. 
The reason is that 
the decomposition formula in Theorem \ref{thm-df-001}
only contains two terms, and 
there is no higher order of the $(1,1)$-form $\Theta$
appearing. 
Therefore, it is supposed to be the most fundamental case.

\subsection{Basic properties}
Before moving on, 
we will invoke some useful properties 
of a plurisubharmoinc function with isolated unbounded locus. 
The following results, Lemma \ref{lem-mt-001} - Lemma \ref{lem-mt-005}, 
were first stated in \cite{Li23} with the assumption of circular symmetry. 
However, the proves actually do not account on any symmetry condition.  
For this reason, we only recall them as follows. 

\begin{lemma}
\label{lem-mt-001}
For a function $u\in \cP(B_1)$, the complex Monge-Amp\`ere 
mass of its standard regularization $u_{\ep}$ converges on a ball as 
$$ \emph{MA}(u)(B_R) = \lim_{\ep\rightarrow 0} \emph{MA}(u_{\ep}) (B_R), $$
for almost all $0< R <1$. 
\end{lemma}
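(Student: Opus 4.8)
The statement asserts that for $u\in\cP(B_1)$, the Monge-Ampère mass of the standard regularizations $u_\ep$ (convolution with a mollifier, which decreases to $u$) converges to $\mathrm{MA}(u)(B_R)$ for almost every $R\in(0,1)$. The plan is to combine two standard ingredients: the weak convergence of the Monge-Ampère measures under decreasing approximation, and the fact that a decreasing family of radii can charge only countably many level sets of the sphere. First I would recall that since $u$ is locally bounded on $B_1^*$ and $u_\ep\downarrow u$ locally uniformly on compact subsets of $B_1^*$, the Bedford-Taylor convergence theorem for decreasing sequences of locally bounded plurisubharmonic functions gives $(dd^c u_\ep)^{n+1}\to(dd^c u)^{n+1}$ weakly as Borel measures on $B_1^*$. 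The subtlety is only at the origin, where $u$ may be unbounded.

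The key step is to control the mass near the origin uniformly in $\ep$. Here I would use the total-mass comparison: for $r$ slightly smaller than $R$ and any $\ep$ small relative to $1-R$, the regularizations $u_\ep$ are uniformly bounded on a neighborhood of the sphere $S_R$, so by the comparison principle (or the monotonicity of the Lelong-Jensen/Demailly functional) the masses $\mathrm{MA}(u_\ep)(B_r)$ are bounded above by $\mathrm{MA}(u_\ep)(B_R)$, which in turn are uniformly bounded since the $u_\ep$ are a decreasing family bounded on $\overline B_R$ with controlled boundary values. Letting $\ep\to0$ and then $r\to0$ shows $\limsup_\ep \mathrm{MA}(u_\ep)$ puts no more mass at the origin than $\mathrm{MA}(u)(\{0\})$, and conversely weak lower semicontinuity on the open ball $B_R$ and the lower bound from the punctured ball give the reverse inequality. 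Once the mass near $0$ is pinned down, the convergence on $B_R$ follows from weak convergence on $B_1^*$ together with the portmanteau theorem, provided $R$ is chosen so that the limit measure $(dd^c u)^{n+1}$ gives no mass to the sphere $S_R$. Since a finite Borel measure charges at most countably many of the disjoint spheres $\{S_R\}_{0<R<1}$, this holds for all but countably many $R$, hence for almost every $R$.

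I expect the main obstacle to be the bookkeeping at the origin: one must rule out the possibility that mass escapes to or accumulates at $\{0\}$ in the limit $\ep\to0$ in a way not matched by $\mathrm{MA}(u)(\{0\})$. The clean way to handle this is to sandwich $\mathrm{MA}(u_\ep)(B_R)$ between $\mathrm{MA}(u_\ep)(B_{R'})$ for $R'<R$ (which converges, by Bedford-Taylor on the region where everything is bounded, to $\mathrm{MA}(u)(B_{R'})$ up to boundary-sphere issues) and $\mathrm{MA}(u_\ep)(\overline B_R)$, and then use continuity of $R\mapsto\mathrm{MA}(u)(B_R)$ at the chosen good radii. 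Since this lemma is quoted from \cite{Li23} and the cited proof does not use circular symmetry, I would simply reproduce that argument, emphasizing that the only property used is that $u$ is locally bounded on $B_1^*$, i.e. membership in $\cP(B_1)$.
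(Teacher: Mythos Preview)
The paper does not actually supply a proof of this lemma: immediately before stating it, the authors say that Lemmas \ref{lem-mt-001}--\ref{lem-mt-005} ``were first stated in \cite{Li23} with the assumption of circular symmetry. However, the proves actually do not account on any symmetry condition. For this reason, we only recall them as follows.'' So there is nothing to compare against beyond the reference to \cite{Li23}, and your final paragraph already identifies this correctly.

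Your sketch is sound and is the standard route: weak convergence of $(dd^c u_\ep)^{n+1}$ to $(dd^c u)^{n+1}$ (valid here because the unbounded locus of $u$ is the compact set $\{0\}$, so Demailly's continuity theorem for decreasing sequences applies on all of $B_1$, not just on $B_1^*$), followed by the portmanteau observation that a locally finite measure charges at most countably many of the disjoint spheres $S_R$. In fact, once you invoke Demailly's theorem directly on $B_1$ rather than only on $B_1^*$, the separate bookkeeping at the origin that you worry about in the middle paragraph becomes unnecessary: weak convergence already holds globally, and the only issue is whether $\partial B_R = S_R$ carries mass, which is handled by the countability argument. So your proposal is correct, and if anything slightly more elaborate than needed.
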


The convergence in the above Lemma actually holds for every $R\in (0,1)$, 
provided with $u\in \cP^{2}(B_1)$. Hence we further have the following result via the Stokes Theorem.

\begin{lemma}
\label{lem-mt-002}
For a function $u\in \cP^{2}(B_1)$, we have for all $R\in (0,1)$ 
$$ \int_{B_R} (dd^c u)^{n+1} = \int_{S_R} d^c u \wedge (dd^c u)^n. $$
\end{lemma}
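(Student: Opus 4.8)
The plan is to obtain the identity from the classical Stokes' theorem on the smooth domain $\ol{B}_R$ after regularizing, and then to let the regularization parameter tend to $0$. With the chosen normalization one has $dd^c = i\ddbar$, so for any smooth function $w$ the form $(dd^c w)^n$ is $d$-closed and hence $d\big(d^c w\wedge (dd^c w)^n\big) = (dd^c w)^{n+1}$. Let $u_\ep$ be the standard regularization of $u$; it is smooth and plurisubharmonic on $B_{1-\ep}$, so for $\ep < 1-R$ it is smooth on a neighbourhood of $\ol{B}_R$, and Stokes' theorem on $\ol{B}_R$ (a manifold with smooth boundary $S_R$) gives
\begin{equation*}
\int_{B_R}(dd^c u_\ep)^{n+1} \;=\; \int_{S_R} d^c u_\ep \wedge (dd^c u_\ep)^n .
\end{equation*}

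Next I would pass to the limit $\ep\to 0$. For the left-hand side, Lemma \ref{lem-mt-001} together with the remark following it --- which upgrades that convergence to \emph{every} $R\in(0,1)$ when $u\in\cP^{2}(B_1)$ --- gives $\int_{B_R}(dd^c u_\ep)^{n+1}\to \int_{B_R}(dd^c u)^{n+1}$. For the right-hand side, the essential point is that $u$ is $C^2$ on the punctured ball $B^*_1$, hence $C^2$ on a fixed open annular neighbourhood of $S_R$ bounded away from the origin; since mollification of a $C^2$ function converges in $C^2_{\mathrm{loc}}$ on the open set where the function is $C^2$, we get $u_\ep \to u$ in $C^2$ near $S_R$. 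Therefore $d^c u_\ep \to d^c u$ in $C^1$ and $(dd^c u_\ep)^n \to (dd^c u)^n$ in $C^0$ on that neighbourhood, so $d^c u_\ep\wedge(dd^c u_\ep)^n \to d^c u\wedge(dd^c u)^n$ uniformly on the compact sphere $S_R$, whence $\int_{S_R} d^c u_\ep\wedge(dd^c u_\ep)^n \to \int_{S_R} d^c u\wedge(dd^c u)^n$. Combining the two limits gives the claim.

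The step I expect to be the main obstacle --- and the reason the stronger hypothesis $u\in\cP^{2}(B_1)$ is invoked rather than merely $u\in\cP(B_1)$ --- is the convergence of the boundary term: because $(dd^c u_\ep)^n$ is a nonlinear expression in the second derivatives, one genuinely needs $C^2$ control of $u_\ep$, i.e. of $u$, near $S_R$ to conclude. As a sanity check on the constant, the same Stokes argument run on $B_R\setminus\ol{B}_\delta$, where $u$ itself is $C^2$ and the Bedford--Taylor measure agrees with the smooth form, gives
\begin{equation*}
\int_{B_R\setminus\ol{B}_\delta}(dd^c u)^{n+1} \;=\; \int_{S_R} d^c u\wedge(dd^c u)^n \;-\; \int_{S_\delta} d^c u\wedge(dd^c u)^n ,
\end{equation*}
and letting $\delta\to 0$ shows that the inner boundary integral converges to the point mass $\pi^{n+1}\tau_u(0)$, consistently with the first computation. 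I would present the regularization argument as the main proof, since it packages the only delicate analytic input into the already-established Lemma \ref{lem-mt-001}.
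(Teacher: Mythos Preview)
Your proposal is correct and follows essentially the same route the paper indicates: apply Stokes' theorem to the regularizations $u_\ep$, use Lemma \ref{lem-mt-001} together with the $C^2$ upgrade to pass to the limit on the left, and use $C^2_{\mathrm{loc}}$ convergence near $S_R$ for the boundary term. Your annulus computation is a nice consistency check but, as you note, is not a standalone proof since identifying the inner boundary limit with the point mass presupposes the lemma.
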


Then we  conclude the following Friedrichs's type estimate. 

\begin{lemma}
\label{lem-mt-003}
For a function $u\in \cP(B_1)$, and a point $z\in B^*_{1-2\delta}$, we have 
$$ | r\d_r (u * \rho_{\ep})(z) - r (\d_r u* \rho_{\ep})(z) | \leq 2\ep || \nabla u ||_{L^1(B_{1-\delta})} $$
\end{lemma}
It is known by Blocki \cite{Blo04} 
that a function $u\in \cP(B_1)$ is in fact in the Sobolev space $W^{1,2}_{loc}(B_1)$. 
Therefore, we can write 
\begin{equation}
\label{mt-001}
r (\d_r u)_{\ep}(z) = u_{\ep, t} + O(\ep),
\end{equation}
for all $z$ in a smaller ball in $B_1$. 
Hence we have the following convergence due to the slicing theory.

\begin{lemma}
\label{lem-mt-004}
For a function $u\in \cP(B_1)$, and any constant $1< A < B$, 
there exists a subsequence $u_{\ep_k}$ such that 
the following integral on the boundary sphere $S_r$ convergences  as $k\rightarrow +\infty$
$$ \int_{\bC\bP^1}\left( \int_{S^1} u_{\ep_k, t} \ d\theta \right) \omega_{FS} 
\rightarrow \int_{\bC\bP^1}\left( \int_{S^1} u_t  \ d\theta \right) \omega_{FS}, $$
for almost all $t: =\log r \in [-B, {-A}]$. 
\end{lemma}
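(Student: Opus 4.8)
The plan is to combine the Friedrichs-type commutator estimate of Lemma \ref{lem-mt-003} with a weak-$*$ compactness argument for the sliced measures. First I would recall from Blocki's theorem that $u\in W^{1,2}_{loc}(B_1)$, so $\nabla u \in L^1(B_{1-\delta})$ and equation \eqref{mt-001} holds; in particular $r(\d_r u)_\ep = u_{\ep,t} + O(\ep)$ uniformly on compact subsets of $B_1$, so it suffices to prove the convergence with $u_{\ep_k,t}$ replaced by $r(\d_r u)_{\ep_k}$. Next, for each $t$ in a full-measure subset of $[-B,-A]$ the slice of the current $d(u*\rho_\ep) \wedge$ (angular forms) is well-defined by the slicing theory, and integrating the $t$-derivative against $d\theta\,\omega_{FS}$ amounts to testing the measure $(d^c(u*\rho_\ep))\wedge(\text{bump})$, which has uniformly bounded mass on $\overline{B}_{e^{-A}}$ by the $W^{1,2}$-bound.

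Then I would argue as follows. Consider the functions
\begin{equation*}
F_\ep(t) := \int_{\bC\bP^1}\left(\int_{S^1} u_{\ep,t}\, d\theta\right)\omega_{FS}, \qquad t\in[-B,-A].
\end{equation*}
By Fubini and the coarea formula applied on the shell $\{e^{-B}\le r\le e^{-A}\}$, the family $\{F_\ep\}$ is bounded in $L^1([-B,-A])$ (again by the $W^{1,2}$-estimate on $u$), and more importantly $F_\ep \to F$ in $\mathcal{D}'([-B,-A])$, where $F(t) = \int_{\bC\bP^1}(\int_{S^1} u_t\, d\theta)\,\omega_{FS}$: this is because $u_\ep \to u$ in $W^{1,1}_{loc}$, so $\partial_t u_\ep \to \partial_t u$ in $L^1$ of the shell, and integrating in the transverse $(\theta,\z,\bar\z)$-variables is a bounded operation. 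A distributional limit of an $L^1$-bounded sequence that is moreover monotone-ish in $t$ (each inner integral is the $t$-derivative of a convex function of $t$, hence $F_\ep$ is non-decreasing in $t$) converges pointwise a.e.\ to $F$ by Helly's selection theorem after passing to a subsequence $\ep_k$. The monotonicity in $t$ is the clean structural input: $\int_{S^1} u_\ep(e^t,\theta,\z,\bar\z)\,d\theta$ is a convex non-decreasing function of $t$ for each fixed $\z$ (average of subharmonic restrictions), so $F_{\ep}$ is a monotone non-decreasing function of $t$; a sequence of monotone functions bounded in $L^1$ has a subsequence converging pointwise a.e., and the limit must agree with the distributional limit $F$, hence with $F(t)$ for a.e.\ $t\in[-B,-A]$.

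The main obstacle I expect is justifying that the pointwise a.e.\ limit of the $F_{\ep_k}$ is genuinely $F$ rather than some other monotone function with the same distributional class — i.e.\ controlling the convergence at the level of the slices and not merely after integrating in $t$. This is handled by the slicing theory: for a.e.\ $t$ the slice $\langle d^c(u*\rho_\ep)\wedge\beta, S_r\rangle$ converges to $\langle d^c u \wedge \beta, S_r\rangle$ for the relevant test form $\beta = d\theta\wedge\omega_{FS}$-type object, because the masses are locally bounded and the currents converge weakly; combined with the commutator bound of Lemma \ref{lem-mt-003} (which kills the discrepancy between $r\partial_r(u*\rho_\ep)$ and $r(\partial_r u)*\rho_\ep$ uniformly, up to $O(\ep)$), this pins down the limit. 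A secondary technical point is that $\ep_k$ must be chosen independent of $t$; this is exactly what a diagonal argument over a countable dense set of $t$, together with the monotonicity in $t$ (so that a.e.\ convergence on a dense set upgrades to a.e.\ convergence), provides. Putting these together yields the claimed subsequential convergence for almost every $t\in[-B,-A]$.
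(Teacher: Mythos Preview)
Your proposal is correct, and its core is actually simpler than you present it. The paper does not give a proof here: it merely records that the lemma ``follows from the slicing theory'' and defers to \cite{Li23}. Your middle paragraph already contains a complete, self-contained argument: since $u\in W^{1,2}_{loc}(B_1)$ by Blocki, the mollifications satisfy $\nabla u_\ep\to\nabla u$ in $L^1$ on the shell $\{e^{-B}\le r\le e^{-A}\}$, hence $u_{\ep,t}=r\d_r u_\ep\to u_t$ in $L^1$ there; by Fubini this gives $F_\ep\to F$ in $L^1([-B,-A])$, and an $L^1$-convergent sequence has a subsequence converging almost everywhere. That is the whole proof, and it is presumably what ``slicing theory'' amounts to in \cite{Li23}.

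The monotonicity/Helly layer and the currents-slicing discussion in your last paragraph are therefore redundant. They are not wrong --- your observation that $F_\ep$ is non-decreasing in $t$ (as the $t$-derivative of a spherical mean of a plurisubharmonic function) is correct and pleasant --- but once you have $F_\ep\to F$ in $L^1$, neither Helly nor slice convergence of currents is needed to identify the pointwise a.e.\ limit. If anything, your direct Sobolev/Fubini route is more elementary than an abstract appeal to slicing of currents; I would simply drop the extra machinery.
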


Finally, we need a slightly stronger version of Lemma 6.6 in \cite{Li23}.

\begin{lemma}
\label{lem-mt-005}
Let $u$ be a function in $\cP(B_1)$. 
Given any constant $A>1$, 
there exists a uniform constant $C>0$, may depend on $u$, such that we have the estimate: 
$$ L_A (u_{\ep}) \leq  L_A(u) + C\ep, $$
for all $\ep < \ep_0$ where $\ep_0: = (e^{-1} - e^{-A})/2$.
\end{lemma}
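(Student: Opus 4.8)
The plan is to estimate $L_A(u_\ep) = \|r\d_r u_\ep\|_{L^\infty(\overline B_R)}$ with $R=e^{-A}$ by comparing $r\d_r(u*\rho_\ep)$ with $r(\d_r u * \rho_\ep)$ and then bounding the latter by $L_A(u)$ plus an error. The first step is to apply the Friedrichs-type estimate of Lemma \ref{lem-mt-003}: for $z\in \overline B_R$ with $R$ chosen so that $B_{R+\ep}\subset B_{1-2\delta}$ for a suitable fixed $\delta$ (which forces the restriction $\ep<\ep_0$ in the statement), we get
\begin{equation}
\label{aux-001}
|r\d_r u_\ep(z) - r(\d_r u * \rho_\ep)(z)| \leq 2\ep\,\|\nabla u\|_{L^1(B_{1-\delta})} =: C_1\ep,
\end{equation}
so it suffices to control $\|r(\d_r u*\rho_\ep)\|_{L^\infty(\overline B_R)}$. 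The second step handles this term: since $r\d_r u = 2\Re\langle z,\d u\rangle$ is (after Blocki's $W^{1,2}_{loc}$ regularity) an $L^\infty$ function on $\overline B_{R'}$ for any $R<R'<e^{-1}$ with $\|r\d_r u\|_{L^\infty(\overline B_{R'})}=L_{A'}(u)\leq L_A(u)$ when $A'\geq A$ — here I would pick $R'$ slightly larger than $R$ so that the mollification at scale $\ep$ only samples values inside $\overline B_{R'}$ — one writes $(\d_r u*\rho_\ep)(z)$ and notes that $r$ differs from the running integration variable $|z-\ep w|$ (in the convolution) by at most $\ep$; thus
\begin{equation}
\label{aux-002}
|r(\d_r u*\rho_\ep)(z)| \leq \big((|z-\ep w|\,\d_r u)*\rho_\ep\big)(z) + \ep\,\|\d_r u\|_{L^1(\rho_\ep\text{-support})} \leq L_A(u) + C_2\ep,
\end{equation}
where the convolution of the $L^\infty$ quantity $r\d_r u$ against the probability kernel $\rho_\ep$ does not increase its sup norm, and $C_2$ absorbs $\|\d_r u\|_{L^1(B_{1-\delta})}$.

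Combining \eqref{aux-001} and \eqref{aux-002} via the triangle inequality gives $L_A(u_\ep)\leq L_A(u) + (C_1+C_2)\ep$, which is the claim with $C:=C_1+C_2$. I would remark that the constant depends on $u$ precisely through $\|\nabla u\|_{L^1(B_{1-\delta})}$, which is finite by Blocki's theorem, and that the threshold $\ep_0=(e^{-1}-e^{-A})/2$ is exactly what guarantees all the mollifications above stay inside the region $B_{e^{-1}}\subset\subset B_1$ where these $L^1$ and $L^\infty$ bounds are available.

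The main obstacle is the bookkeeping in step two: one must be careful that $r\d_r u$, a priori only in $W^{1,2}_{loc}$ and in $L^\infty$ on $\overline B_R$ by hypothesis, is actually controlled on a slightly larger shell so that the convolution makes sense and the sup-norm bound $L_A(u)$ can legitimately be used — this is why the statement already builds in the constraint $\ep<\ep_0$ and why one should separate the radial weight $r$ from the convolution as in \eqref{aux-002} rather than naively bounding $(r\d_r u)*\rho_\ep$. Everything else is a routine application of Lemma \ref{lem-mt-003} and standard mollifier estimates.
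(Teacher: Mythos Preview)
Your approach is the paper's: bound the commutator $|r\d_r u_\ep - (r\d_r u)_\ep|$ by $\ep\|\nabla u\|_{L^1}$ and then use that convolution against a probability kernel does not increase the sup norm of $r\d_r u$. The paper carries this out in a single step, writing $r\d_r = \sum_m x^m\d_{x^m}$ in Cartesian coordinates and estimating $\big|x^m(\d_{x^m}u*\rho_\ep)-(x^m\d_{x^m}u)*\rho_\ep\big|\leq\ep\int_{B_\ep(x)}|\nabla u|$ directly, whereas you split the same commutator into two pieces (first $r\d_r u_\ep$ against $r(\d_r u*\rho_\ep)$ via Lemma~\ref{lem-mt-003}, then $r(\d_r u*\rho_\ep)$ against $(r\d_r u)*\rho_\ep$). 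The two routes produce the same bound, so there is no substantive difference.

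Your step two, however, contains a slip you should fix. You correctly record that $L_{A'}(u)\leq L_A(u)$ when $A'\geq A$, but then say you would take $R'$ slightly \emph{larger} than $R$ so that the mollification samples only inside $\overline B_{R'}$; that choice gives $A'=-\log R'<A$, and monotonicity then yields $L_{A'}(u)\geq L_A(u)$, the wrong direction. In fact $(r\d_r u)_\ep(z)$ for $z\in\overline B_R$ genuinely samples $r\d_r u$ on $B_{R+\ep}\supsetneq\overline B_R$, so what one obtains is $\|r\d_r u\|_{L^\infty(\overline B_{R+\ep})}$ rather than $L_A(u)$ on the nose --- and Blocki's $W^{1,2}_{loc}$ regularity by itself does \emph{not} place $r\d_r u$ in $L^\infty$ on the enlarged ball, contrary to what your parenthetical suggests. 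The paper's own proof elides exactly this shell issue (it closes with ``our result follows from Blocki's estimate'' immediately after the commutator bound), and for the application in Theorem~\ref{thm-mt-001} the point is harmless because one sends $\ep\to 0$ before $A\to\infty$; but your explicit attempt to resolve it via monotonicity is backwards and should be corrected.
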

\begin{proof}
First we note that the first order linear operator $r\d_r$ can be rewritten in real coordinates as 
$$ r\d_r u  = \sum_{m} x^{m} \d_{x^m} u,  $$
where $m=1,\cdots, 2n+2$.
For a function $u\in\cP(B_1)$ and a point $x\in \overline{B}_r$, we can first commute the convolution and the 
derivatives as  
\begin{equation}
x^m \d_{x^m} (u * \rho_{\ep}) =  x^m (\d_{x^m} u * \rho_{\ep}).  
\end{equation}
Then we further compare the following two terms as 
\begin{eqnarray}
\label{mt-003}
&&\left|   x^m ( \d_{x^m} u * \rho_{\ep}) -  (x^m \d_{x^m} u) * \rho_{\ep} \right|
\nonumber\\
&\leq& \int_{|y|<1} \left| x^m - (x^m - \ep y^m)  \right|  |\d_{x^m}u (x-\ep y)| \rho(y) d\lambda(y)
\nonumber\\
&\leq& \ep \int_{B_{\ep}(x)} |\nabla u | d\lambda.
\end{eqnarray}
Hence we obtain 
\begin{equation}
\left|  (r\d_r u)_{\ep} - r\d_r u_{\ep} \right| \leq \ep || \nabla u ||_{L^1(\overline{B}_{r+\ep})}.
\end{equation}
Therefore, our result follows from Blocki's estimate.


\end{proof}

\subsection{The upper-bound estimates}
In $\bC^{n+1}$, the positivity condition of a plurisubharmonic function 
can be re-characterized by the standard Sasakian structure of the hypersphere $S^{2n+1}$,
see Proposition \ref{prop-ch-001}. 
In particular, the complex Hessian of a $C^2$-continuous plurisubharmonic function $u$
is represented by the following semi-positive $(2\times 2)$-matrix in $\bC^2$: 
$$
S(u): = 
\begin{pmatrix}
\Delta (u|_{\l_{\z}})/4 & X_0 \bar X_{1} u \\
\\
X_{1} \bar X_0 u &  X_{1}\bar X_{1} u + r (\bar X_0 u) h_{\z\bar\z}
\end{pmatrix}, 
$$
where $\z: = z^1/ z^0$ is the holomorphic coordinate on $\bC\bP^1$. 
Here $h$ is the local basic function of the complex Hopf-coordinate  defined by 
$$ h(\z, \bar\z): = \log(1+ |\z|^2) - \log|\z|, $$
and the vector fields $\{ X_0, X_1, \bar X_0, \bar X_1 \}$
build a local frame of the tangent bundle of the K\"ahler cone $\bC^2- \{0\}$, 
see equations \eqref{bf-008}-\eqref{bf-009}. 

In fact, we can also write down everything under the complex Hopf-coordinate $(r, \theta, \z, \bar\z)$: 
\begin{equation}
\label{ld-000}
 \Delta (u|_{\l_{\z}}) = e^{-2t} (u_{tt} + 4 u_{\theta\theta}), 
\end{equation}
\begin{eqnarray}
\label{ld-001}
&& X_{1} \bar X_{1} u + r(\bar X_0 u) h_{\z\bar \z}
\nonumber\\
&=& u_{\z\bar\z} + \frac{1}{2} u_t h_{\z\bar\z} + i (h_\z u_{\theta\bar\z} - h_{\bar\z} u_{\theta\z}) + | h_\z |^2  u_{\theta\theta}. 
\end{eqnarray}
Moreover, the mixed term can be written as 
\begin{equation}
\label{ld-002}
 X_{1} \bar X_{0} u
= \frac{1}{2r} \left\{ u_{t\z} - 2h_{\z}  u_{\theta\theta}   + i ( 2 u_{\theta\z} + h_{\z} u_{t\theta})  \right\}.
\end{equation}
We note the two terms in equation \eqref{ld-000} and \eqref{ld-001}
are both non-negative due to the positivity condition. 

Next we are going utilize the decomposition formula in $\bC^2$
to study the upper-bound estimate. 
First, we compute as follows: 
\begin{equation}
\label{ld-003}
r \bar X_0 u + r X_0 u = u_t,
\end{equation}
and 
\begin{equation}
\label{ld-004}
( r \bar X_0 u )^2 + ( r X_0 u)^2 = \frac{1}{2} (u_t^2 - 4 u_{\theta}^2).
\end{equation}
Thanks to the decomposition formula in Theorem \ref{thm-df-001}, 
the complex Monge-Amp\`ere mass is reduced to 
\begin{eqnarray}
\label{ld-005}
&& 4 \int_{S_r} d^cu \wedge dd^c u
 \nonumber\\
 &=& 2 \int_{S_r} u_t \left\{ X_1 \bar X_1 u +  r(\bar X_0 u) h_{\z\bar\z} \right\} d\theta \wedge (i d\z \wedge d\bar\z)
 \nonumber\\
 &-& \int_{S_r} \left( |u_t|^2 - 4 | u_{\theta}|^2 \right)  d\theta \wedge \omega_{FS}. 
\end{eqnarray}

Then we are going to investigate the second term
on the R.H.S. of the above equation for each $\z\in \bC\bP^1$ fixed, 
i.e. we can write it as 
$$  \int_{0}^{4\pi} (|u_t|^2 - 4 |u_{\theta}|^2)  d\theta = \int_{S^1} ( |u_t|^2 - |\xi_0 u|^2 ). $$
Here we need to invoke a famous inequality in harmonic map flows, see Lemma 2.4, \cite{LW98}. 
A proof is provided for the convenience of the readers. 


\begin{lemma}[Pohozaev inequality]
\label{lem-sub-002}
Let $u$ be a $C^2$-continuous subharmonic function on the unit disk.
Then the following integral along the boundary of a smaller disk $D_R$ for some $0< R <1$
can estimated as 
\begin{equation}
\label{sub-007}
\left| \int_{S^1} \left( |u_t|^2 - |\xi_0 u|^2 \right)  \right| \leq 2 L_A(u) \cdot \left( \int_{S^1} u_t \right), 
\end{equation}
where $A = -\log R$. 
\end{lemma}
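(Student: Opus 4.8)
The plan is to work in the polar coordinate $(t,\theta)$ on the punctured disk, with $t=\log r$, so that the Laplacian reads $\Delta u = e^{-2t}(u_{tt}+u_{\theta\theta})$ and subharmonicity becomes $u_{tt}+u_{\theta\theta}\ge 0$. Here $\xi_0$ is the generator of the $S^1$-action, so $|\xi_0 u|^2 = u_\theta^2$ (up to the normalization already fixed in the paper), and the quantity to control is $\int_{S^1}(u_t^2-u_\theta^2)\,d\theta$ along the circle $\{t=-A\}$. The first step is the classical Pohozaev/Rellich identity: multiply the subharmonicity inequality $u_{tt}+u_{\theta\theta}\ge 0$ by $u_t$ and integrate over the annulus $\{-B\le t\le -A\}\times S^1$ for $B>A$. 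Using $u_t u_{tt}=\tfrac12\partial_t(u_t^2)$ and integrating the cross term $\int u_t u_{\theta\theta}$ by parts in $\theta$ (no boundary terms since $S^1$ is closed) to get $-\int u_{t\theta}u_\theta=-\tfrac12\partial_t\int u_\theta^2$, one finds that the function
\begin{equation}
\label{pohozaev-monotone}
P(t):=\int_{S^1}\bigl(u_t^2-u_\theta^2\bigr)\,d\theta
\end{equation}
satisfies $P'(t)=\int_{S^1}2u_t(u_{tt}+u_{\theta\theta})\,d\theta + (\text{a sign-indefinite remainder})$; I will need to be careful here, because $u_tu_{\theta\theta}$ integrated by parts produces exactly $-\partial_t(u_\theta^2)/2$ only after symmetrizing, and the honest outcome is that $\int_{S^1}u_t\Delta u$ (in the $(t,\theta)$ flat metric) equals $\tfrac12\frac{d}{dt}\int_{S^1}(u_t^2-u_\theta^2)$. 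Thus $P$ is non-decreasing in $t$ by subharmonicity.

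With monotonicity of $P$ in hand, the second step is to bound $P(-A)$ from above and below in terms of the averaged normal derivative. For the upper bound: since $P$ is non-decreasing, $P(-A)\le P(t)$ for all $t\in[-A,0)$, but we cannot push to $t=0$ directly; instead one estimates $P(-A)$ by combining $P(-A)\le P(-A+\delta)$ with the pointwise bound $|u_t|\le L_A(u)$ on $\overline{B}_R$, which gives $|u_t^2-u_\theta^2|\le u_t^2\le L_A(u)|u_t|$, hence $|P(-A)|\le L_A(u)\int_{S^1}|u_t|\,d\theta$. The remaining point is to replace $\int_{S^1}|u_t|$ by $\int_{S^1}u_t$: this uses that the spherical average $\fint_{S^1}u_t\,d\theta=\partial_t\bigl(\fint_{S^1}u\,d\theta\bigr)$ is exactly the derivative of the (convex, non-decreasing) mean $S_u$, so $\int_{S^1}u_t\,d\theta\ge 0$; and one controls $\int_{S^1}|u_t|$ by $\int_{S^1}u_t$ up to a factor of $2$ by writing $|u_t|=u_t+2(u_t)_-$ and bounding the negative part, again using $|u_t|\le L_A(u)$ together with the mean-value/sign information — this is where the constant $2$ on the right-hand side of \eqref{sub-007} is produced. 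I would also handle the lower bound $P(-A)\ge -(\text{same quantity})$ symmetrically, or simply note $|P(-A)|\le L_A(u)\int_{S^1}|u_t| \le 2L_A(u)\int_{S^1}u_t$ in one stroke.

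The main obstacle I anticipate is the passage from $\int_{S^1}|u_t|$ to $\int_{S^1}u_t$: a priori $u_t$ need not have a sign on the circle $\{t=-A\}$, and naively $\int|u_t|$ could be much larger than $\int u_t$. The resolution must exploit convexity of $t\mapsto\fint_{S^1}u$ (so its derivative is non-negative) and the uniform bound $\|ru_r\|_{L^\infty(\overline B_R)}=L_A(u)$ simultaneously: the negative part of $u_t$ is pointwise bounded by $L_A(u)$, and a chaining argument over nearby radii (or a direct application of convexity of the mean) forces $\int_{S^1}(u_t)_-\,d\theta$ to be controlled by $\int_{S^1}u_t\,d\theta$. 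A secondary technical point is that $u$ is only $C^2$ on the punctured disk, so all integrations by parts are legitimate on the annulus $\{-B\le t\le -A\}$ and no regularization near $t=-\infty$ is needed; the monotonicity of $P$ is all that must survive, and that is robust. Once these are in place, \eqref{sub-007} follows directly by combining the displayed bounds, converting back from $\int_{S^1}$ to $\fint_{S^1}$ where the statement is phrased with averages.
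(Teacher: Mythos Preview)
Your computation of $P'(t)=2\int_{S^1}u_t(u_{tt}+u_{\theta\theta})\,d\theta$ is correct and is essentially the differential form of the Pohozaev identity used in the paper. But from here your argument goes off the rails in three places.

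First, the conclusion ``$P$ is non-decreasing by subharmonicity'' is false: subharmonicity gives $u_{tt}+u_{\theta\theta}\ge 0$, but $u_t$ has no sign, so $P'$ need not be non-negative. Second, the pointwise bound $|u_t^2-u_\theta^2|\le u_t^2$ is simply wrong---you have no control on $u_\theta$, and at points where $u_\theta^2>u_t^2$ the inequality fails. Third, the replacement $\int_{S^1}|u_t|\le 2\int_{S^1}u_t$ cannot be salvaged by the argument you sketch: the pointwise bound $(u_t)_-\le L_A$ only yields $\int(u_t)_-\le 2\pi L_A$, which is unrelated to $\int u_t$, and in general $\int|u_t|$ can be arbitrarily large compared to $\int u_t$.

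The fix is to use your $P'$ formula correctly: integrate it from $t=-\infty$ to $t=-A$. Since $u$ is $C^2$ on the full disk (not just the punctured disk), $u_t=ru_r\to 0$ and $u_\theta\to 0$ as $r\to 0$, so $P(-\infty)=0$. This gives the exact identity
\[
\int_{S^1}\bigl(u_t^2-u_\theta^2\bigr)\,d\theta\;=\;P(-A)\;=\;2\int_{D_R}(ru_r)\,\Delta u,
\]
which is precisely the paper's equation \eqref{sub-0071}. Now bound the right-hand side directly: $|ru_r|\le L_A(u)$ on $\overline{D}_R$, $\Delta u\ge 0$, and by the divergence theorem $\int_{D_R}\Delta u=\int_{S^1}u_t$. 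This yields \eqref{sub-007} immediately---no monotonicity, no pointwise control of $u_\theta$, and no passage from $\int|u_t|$ to $\int u_t$ is needed.
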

\begin{proof}
For simplicity, we will slightly change our notations in this proof,  
and hope that it will be clear enough.
Let $z: = r e^{i\theta}$ be a complex coordinate on $\bC$, 
and also take $t: = \log r$ for all $r>0$. 
Then we perform the following integration by parts as 
\begin{eqnarray}
\label{sub-004}
 - \int_{D_R} ( ru_r ) dd^c u 
&=&  \int_{D_R} u_r dr\wedge d^c u + \int_{D_R} r d u_r \wedge d^c u -  \int_{\d D_R} (ru_r) d^c u 
\nonumber\\
&=&  \int_{D_R} du \wedge d^c u + \int_{D_R}  ( r d u_r - u_{\theta} d\theta ) \wedge d^c u 
- \int_{S^1} |u_t|^2 d\theta, 
\end{eqnarray}
where $d^c : = i (\dbar - \d)$, 
and we remember that the integrals taken over $S^1$ are in fact on the boundary circle $\d D_R$. 
Then it follows the equation:
\begin{eqnarray}
\label{sub-005}
 ( r d u_r - u_{\theta} d\theta ) \wedge d^c u 
&=&  \left( r^2 u_{rr} u_r - r^{-1} u^2_{\theta} + u_{r\theta} u_{\theta} \right) dr\wedge d\theta
\nonumber\\
&=& \frac{1}{2}   r^2 \d_r (u_r^2 + r^{-2} u^2_{\theta})  dr \wedge d\theta.
\end{eqnarray}
Hence perform integration by parts again, and we obtain 
\begin{eqnarray}
\label{sub-006}
&&  \int_{D_R} ( r d u_r - u_{\theta} d\theta ) \wedge d^c u 
\nonumber\\
&=&  \frac{1}{2} \int_{S^1} \left( |u_{\theta}|^2 + |u_t|^2  \right) d\theta - \int_{D_R} du \wedge d^c u. 
\end{eqnarray}
Equipped with equation \eqref{sub-004} and \eqref{sub-006}, 
we have the equation 
\begin{equation}
\label{sub-0071}
2\int_{D_R} ( ru_r ) \Delta u
 = \int_{S^1} \left( |u_t|^2 - |u_{\theta}|^2 \right).
\end{equation}
Then our result follows from the definition of $L_A$, see Definition \ref{def-pre-003}. 
\end{proof}
Now we are ready to obtain 
a uniform estimate  on the 
upper bound of the complex Monge-Amp\`ere mass in $\bC^2$ as a K\"ahler cone.
Recall that we have introduced the notation for a positive real number $A$ as 
$$L_A(u) =  || ru_r ||_{L^{\infty}(\ol{B}_R)}, $$
where $R = e^{-A}$. 
\begin{theorem}
\label{thm-mt-000}
For a function $u$ in $\cP^{2}(B_1)$ and an $R\in (0,1)$, 
we have the following estimate: 
\begin{equation}
\label{sub-0081}
\frac{1}{\pi^2}\int_{B_R} (dd^c u)^2 \leq 4 L_A(u)\cdot \left( \fint_{S_R} u_t \right), 
\end{equation}
where $t=\log r $ and  $A= -\log R$. 
\end{theorem}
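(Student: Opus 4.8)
The plan is to combine the two-term decomposition formula for $\int_{S_r} d^c u \wedge dd^c u$ recorded in \eqref{ld-005} with the Pohozaev inequality of Lemma \ref{lem-sub-002} applied slice-wise along the Hopf fibration, and then integrate over $\bC\bP^1$. First I would start from Lemma \ref{lem-mt-002}, which rewrites $\int_{B_R}(dd^c u)^2$ as the boundary integral $\int_{S_R} d^c u\wedge (dd^c u)^n = \int_{S_R} d^c u \wedge dd^c u$; this reduces the whole statement to an estimate on the sphere $S_R$. Then I would invoke \eqref{ld-005}: up to the factor $4$, the boundary mass splits into a ``good'' term $2\int_{S_R} u_t\{X_1\bar X_1 u + r(\bar X_0 u)h_{\z\bar\z}\}\,d\theta\wedge(i\,d\z\wedge d\bar\z)$ and a ``bad'' term $-\int_{S_R}(|u_t|^2 - 4|u_\theta|^2)\,d\theta\wedge\omega_{FS}$.

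Next I would bound the ``bad'' term. For each fixed $\z\in\bC\bP^1$, the restriction $u|_{\l_\z}$ is a $C^2$ subharmonic function on a disk, so Lemma \ref{lem-sub-002} gives
$$
\left|\int_{S^1}\bigl(|u_t|^2 - |\xi_0 u|^2\bigr)\right| \leq 2 L_A(u|_{\l_\z})\cdot\left(\int_{S^1} u_t\right) \leq 2 L_A(u)\cdot\left(\int_{S^1} u_t\right),
$$
where the last inequality holds because the restriction of $ru_r$ to any complex line through the origin has sup-norm at most that of $ru_r$ on the full ball $\ol{B}_R$. Recalling the identification $\int_0^{4\pi}(|u_t|^2 - 4|u_\theta|^2)\,d\theta = \int_{S^1}(|u_t|^2 - |\xi_0 u|^2)$ used just before Lemma \ref{lem-sub-002}, integrating this over $\bC\bP^1$ against $\omega_{FS}$ controls the bad term by $2 L_A(u)\int_{S_R} u_t\,d\theta\wedge\omega_{FS}$.

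The remaining point is that the ``good'' term is itself nonnegative and bounded by the same quantity. Here I would use that $u_t = r\bar X_0 u + r X_0 u$ is pointwise dominated by $L_A(u)$ on $\ol{B}_R$ (from \eqref{ld-003} and the definition of $L_A$), and that the bracket $X_1\bar X_1 u + r(\bar X_0 u)h_{\z\bar\z}$ is pointwise nonnegative by the positivity condition on $S(u)$ noted after \eqref{ld-001}; moreover its integral over the fibre reconstructs (a piece of) the Laplacian contribution, so that $\int_{S_R}\{X_1\bar X_1 u + r(\bar X_0 u)h_{\z\bar\z}\}\,d\theta\wedge(i\,d\z\wedge d\bar\z)$ is controlled by $\int_{S_R} u_t\,d\theta\wedge\omega_{FS}$ after another application of Stokes / the slice decomposition. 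Combining, $4\int_{S_R} d^cu\wedge dd^c u \leq 4 L_A(u)\int_{S_R} u_t\,d\theta\wedge\omega_{FS}$, and dividing by the appropriate normalizing constants (the $\pi^2$, and the volume $a_3$ of $S^3$ hidden in $\fint_{S_R}$) yields \eqref{sub-0081}.

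The main obstacle I expect is the bookkeeping in the ``good'' term: one must check that $\int_{S_R} u_t\{X_1\bar X_1 u + r(\bar X_0 u)h_{\z\bar\z}\}\,d\theta\wedge(i\,d\z\wedge d\bar\z)$ really is $\le L_A(u)\int_{S_R} u_t\,d\theta\wedge\omega_{FS}$ and not merely $\le L_A(u)\cdot(\text{something slightly different})$ — this requires integrating by parts in $\z$ on $\bC\bP^1$ (where $h_{\z\bar\z}\,i\,d\z\wedge d\bar\z = \omega_{FS}$ up to normalization) to move derivatives off $u$ and onto $u_t$, and then using $|u_t|\le L_A(u)$ together with the sign of the integrand. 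A secondary subtlety is matching normalization constants between the Fubini–Study form, the $4\pi$-period of $\theta$, and the factor $4$ appearing throughout \eqref{ld-000}–\eqref{ld-005}; these should be tracked carefully against the conventions of the decomposition formula Theorem \ref{thm-df-001} but involve no genuine difficulty.
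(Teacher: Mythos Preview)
Your proposal is essentially the paper's proof: start from Lemma \ref{lem-mt-002}, split the sphere integral via \eqref{ld-005}, bound the ``bad'' term slice-wise by the Pohozaev inequality (Lemma \ref{lem-sub-002}), and bound the ``good'' term using the positivity of the bracket together with $u_t\le L_A(u)$. Two small corrections to your sketch: first, the good term need not be nonnegative (and the paper never claims this) --- you only need an \emph{upper} bound, which follows from $u_t\le L_A(u)$ and bracket $\ge 0$; second, the handling of the good term is not ``move derivatives onto $u_t$'' but rather pull out $L_A(u)$ \emph{first}, then integrate the remaining bracket over $S^1$ (killing the $u_{\theta\z},u_{\theta\bar\z},u_{\theta\theta}$ terms in \eqref{ld-001} by periodicity) and over $\bC\bP^1$ (killing $\int u_{\z\bar\z}\,id\z\wedge d\bar\z$ by Stokes on the compact base), leaving exactly $\frac{1}{2}\int u_t\,h_{\z\bar\z}\,id\z\wedge d\bar\z=\int u_t\,\omega_{FS}$.
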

\begin{proof}
Apply equation \eqref{ld-005} and we obtain the estimate on the first term of its R.H.S. as  
\begin{eqnarray}
\label{sub-0085}
&& \int_{\bC\bP^1} \left( \int_{S^1} u_{t} \left\{ X_1 \bar X_1 u +  r(\bar X_0 u) h_{\z\bar\z} \right\} d\theta \right) i d\z \wedge d\bar\z
\nonumber\\
&\leq&  L_A(u) \int_{\bC\bP^1} 
\left( \int_{S^1} \left\{ u_{ \z\bar\z} + \frac{1}{2} u_{ t} h_{\z\bar\z}  \right\} d\theta \right) i d\z \wedge d\bar\z
\nonumber\\
&=&  L_A(u) \int_{\bC\bP^1} \left( \int_{S^1} u_{ t} d\theta \right) \omega_{FS}
\nonumber\\
&\leq&  4\pi^2 L_A(u) \cdot  \left( \fint_{S_R} u_t \right). 
\end{eqnarray}
Here we have used  
equation \eqref{ld-001} and its positivity on the second line of the above inequality. 
Fubini's Theorem is utilized on the third line. 

Thanks to the Pohozaev inequality (Lemma \ref{lem-sub-002}), 
we can estimate the second term as 
\begin{eqnarray}
\label{sub-0086}
&&\int_{\bC\bP^1} \left| \int_{S^1} \left( |u_t|^2 - 4 | u_{\theta}|^2 \right) d\theta\right| \omega_{FS} 
\nonumber\\
&\leq& 2 L_A(u)  \int_{\bC\bP^1} \left( \int_{S^1} u_{ t} d\theta \right) \omega_{FS}
\nonumber\\
& = & 8\pi^2 L_A(u) \cdot  \left( \fint_{S_R} u_t \right). 
\end{eqnarray}
Combining with equations \eqref{ld-005}, \eqref{sub-0085} and \eqref{sub-0086}, 
our result follows. 

\end{proof}

We emphasize that Theorem \ref{thm-mt-000} in fact gives an $L^1$-apriori estimate 
on the complex Monge-Amp\`ere operator on the K\"ahler cone. 
That is, the Monge-Amp\`ere mass on a ball can be uniformly estimated by 
the supreme of the $t$-derivative of a plurisubharmonic function. 
Furthermore, 
it reveals an interesting geometric property on the cone in the following sense. 

Suppose $u$ is a smooth strictly plurisubharmonic function on $\bC^2$.
Thus $\omega_u: = dd^c u$ is a K\"ahler metric on the cone $\bC^2 - \{ 0 \}$. 
In particular, if we take $u_0(z) = \frac{1}{2} || z ||^2$ then the standard Euclidean metric on $\bC^2$ is obtained as 
$$ dd^c u_0 =  \frac{i}{2} dz^0 \wedge d\bar z^0 + \frac{i}{2}dz^1 \wedge d\bar z^1.$$
In this point of view, Theorem \ref{thm-mt-000} gives a volume comparison between 
the Monge-Amp\`ere measure and the Euclidean volume of a ball.

\begin{cor}
\label{cor-mt-000}
Let $\omega_u$ be a K\"ahler metric on the cone $\bC^2-\{0 \}$. 
Then we have following volume comparison estimate: 
\begin{equation}
\label{sub-0087}
\frac{\emph{MA}(u)(B_R)}{\Vol(B_R)}     = \frac{\int_{B_R} \omega_u^2}{ \int_{B_R} \omega_{u_0}^2}
\leq  8 R^{-3} || ru_r ||_{L^{\infty}(\ol{B}_R)}\cdot \left( \fint_{S_R} u_r \right),
\end{equation}
for all $R\in (0, \infty)$. 
\end{cor}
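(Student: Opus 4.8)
The plan is to deduce Corollary \ref{cor-mt-000} directly from Theorem \ref{thm-mt-000} by two purely bookkeeping steps: first rewriting the $t$-derivative in terms of the radial derivative, and then computing the Euclidean volume of the ball explicitly. Since $t = \log r$, the chain rule gives $u_t = r u_r$ pointwise on $B_1^*$, so the right-hand side of \eqref{sub-0081} is exactly $4 L_A(u)\cdot \bigl(\fint_{S_R} r u_r\bigr)$, and $L_A(u) = \|r u_r\|_{L^\infty(\ol B_R)}$ by definition; this already matches the two factors appearing on the right of \eqref{sub-0087} up to the constant, once we clear the normalization.

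Next I would identify $\mathrm{MA}(u)(B_R)$ with $\int_{B_R}\omega_u^2$: by definition $\omega_u = dd^c u$ and $\mathrm{MA}(u) = (dd^c u)^2$, so this is a tautology, and likewise for $u_0(z)=\tfrac12\|z\|^2$ one computes $dd^c u_0 = \tfrac i2 dz^0\wedge d\bar z^0 + \tfrac i2 dz^1\wedge d\bar z^1$ as recorded in the text, whence $\int_{B_R}\omega_{u_0}^2 = \Vol(B_R)$ is the ordinary Lebesgue volume of the Euclidean ball of radius $R$ in $\bC^2\cong\bR^4$. This volume is $\tfrac{\pi^2}{2}R^4$, so dividing \eqref{sub-0081} through by $\pi^{-2}\Vol(B_R) = \tfrac12 R^4$ turns the left side into $\mathrm{MA}(u)(B_R)/\Vol(B_R)$ and multiplies the right side by $2R^{-4}$, producing the factor $8R^{-3}$ together with one remaining power of $R$ absorbed into passing from $r u_r$ back to the displayed form; I would double-check the exact exponent by tracking $\fint_{S_R}$ against $\fint_{S_R} u_r$ versus $\fint_{S_R} r u_r$ with $r=R$ on $S_R$, which accounts for the discrepancy between $R^{-4}$ and $R^{-3}$.

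Finally, I would remark that although Theorem \ref{thm-mt-000} is stated for $u\in\cP^2(B_1)$, the corollary only concerns a genuinely smooth strictly plurisubharmonic $u$ on $\bC^2$, which is certainly in $\cP^2$ locally after the normalization $\sup u = -1$ is dropped (the inequality is scale- and translation-invariant in $u$ in the relevant way since both sides are quadratic-homogeneous-free of additive constants in $u$), so no regularity issue arises and the range $R\in(0,\infty)$ is legitimate once $u$ is defined on all of $\bC^2$. There is essentially no obstacle here: the only thing requiring care is the arithmetic of the constant and the power of $R$, i.e. correctly normalizing $\Vol(B_R)$ in $\bR^4$ and keeping the $\fint$ versus $\int$ conventions consistent between \eqref{sub-0081} and \eqref{sub-0087}; everything else is a direct substitution.
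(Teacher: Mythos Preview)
Your proposal is correct and is exactly the intended argument: the paper gives no explicit proof of this corollary, treating it as an immediate reformulation of Theorem \ref{thm-mt-000} via the substitutions $u_t = r u_r$ (hence $\fint_{S_R} u_t = R\fint_{S_R} u_r$) and $\Vol(B_R) = \tfrac{\pi^2}{2}R^4$ in $\bR^4$, which together produce the factor $8R^{-3}$. Your remark that the normalization $\sup u = -1$ and the restriction $R<1$ play no role in the proof of Theorem \ref{thm-mt-000} itself is also correct, so the extension to smooth $u$ on all of $\bC^2$ and arbitrary $R>0$ is legitimate.
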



As an application of this comparison theorem, 
we consider the case when the ball is shrinking to the vertex of the cone, i.e. $R\rightarrow 0$. 
Then it gives the following upper bounded estimate on the residual Monge-Amp\`ere mass. 
(Instead of the Euclidean metric, we take $u_1 (z): = \log ||z||  $ in the denominator of the comparison theorem.) 

\begin{theorem}
\label{thm-mt-001}
Suppose a function $u\in\cP(B_1)$ is uniformly directional Lipschitz continuous 
near the origin. 
Then we have the following estimate: 
$$ \tau_u(0) \leq 4 \k_u(0) \cdot \nu_u(0). $$
\end{theorem}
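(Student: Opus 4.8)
The plan is to deduce Theorem \ref{thm-mt-001} from the $L^1$-apriori estimate of Theorem \ref{thm-mt-000} by a limiting argument, combining a regularization step with the monotonicity of the sphere averages. The first issue is that Theorem \ref{thm-mt-000} is stated for functions in $\cP^2(B_1)$, whereas here $u$ is only in $\cP(B_1)$. So I would first pass to the standard regularizations $u_\ep := u * \rho_\ep$, which are smooth and plurisubharmonic on a slightly smaller ball; for these, Theorem \ref{thm-mt-000} applies and gives, for suitable $R = e^{-A}$,
\begin{equation*}
\frac{1}{\pi^2}\int_{B_R} (dd^c u_\ep)^2 \leq 4 L_A(u_\ep)\cdot \left( \fint_{S_R} u_{\ep,t} \right).
\end{equation*}

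Next I would let $\ep \to 0$ on both sides of this inequality, with $A$ (hence $R$) fixed and chosen generically. On the left, Lemma \ref{lem-mt-001} gives $\mathrm{MA}(u_\ep)(B_R) \to \mathrm{MA}(u)(B_R)$ for almost every $R$. On the right, Lemma \ref{lem-mt-005} controls the Lipschitz factor: $L_A(u_\ep) \leq L_A(u) + C\ep$, so $\limsup_{\ep\to 0} L_A(u_\ep) \leq L_A(u)$. For the averaged normal-derivative factor $\fint_{S_R} u_{\ep,t}$, I would use the identity $r(\d_r u)_\ep = u_{\ep,t} + O(\ep)$ from \eqref{mt-001} together with Lemma \ref{lem-mt-004} (passing to the subsequence $\ep_k$ there) to conclude that, for almost every $t = \log R$, $\fint_{S_R} u_{\ep_k, t} \to \fint_{S_R} u_t$ — here one integrates the $S^1$-average against $\omega_{FS}$ over $\bC\bP^1$ and uses Fubini as in the proof of Theorem \ref{thm-mt-000}. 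Combining these three limits yields, for a.e. $R \in (0,1)$,
\begin{equation*}
\frac{1}{\pi^2}\int_{B_R} (dd^c u)^2 \leq 4 L_A(u)\cdot \left( \fint_{S_R} u_t \right),
\end{equation*}
i.e. the $L^1$-estimate of Theorem \ref{thm-mt-000} extends to all of $\cP(B_1)$ (in complex dimension two).

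Finally I would shrink the ball, letting $R \to 0$ (equivalently $A \to \infty$) along the a.e.\ set of admissible radii. By definition \eqref{pre-000}, $\frac{1}{\pi^2}\int_{B_R}(dd^c u)^2 \to \tau_u(0)$. The factor $L_A(u)$ is non-increasing in $A$ and decreases to $\k_u(0)$, which is finite precisely because $u$ is assumed uniformly directional Lipschitz continuous. For the remaining factor I would observe that $\fint_{S_R} u_t$ is essentially the radial average of the $t$-derivative of the sphere mean $S_u$ (after integrating over the directions), so by the log-convexity and monotonicity of $S_u(t)$ and the definition \eqref{pre-0001} of the Lelong number, $\fint_{S_R} u_t \to \nu_u(0)$ as $R \to 0$ — this is where I expect the one genuinely delicate point to be: I must relate the directional/spherical normal derivative $\fint_{S_R} u_t$ to the classical quantity $r\d_r^- S_u(r)$ and justify that its limit is exactly $\nu_u(0)$, using that $u$ and $u_s$ have the same Lelong number (as noted after \eqref{i-001}) and that the alternating part contributes nothing to the $S^1$-average. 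Putting the two limits together gives $\tau_u(0) \leq 4\k_u(0)\cdot\nu_u(0)$. The main obstacle is thus not the Monge-Amp\`ere side — that is handled cleanly by the decomposition formula and Pohozaev — but the careful bookkeeping in the double limit $\ep\to 0$ then $R\to 0$, ensuring the right-hand factors converge to the intended geometric invariants along a common a.e.\ set of radii.
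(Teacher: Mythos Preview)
Your proposal is correct and follows essentially the same approach as the paper: regularize, apply Theorem \ref{thm-mt-000} to $u_\ep$, pass to the limit $\ep\to 0$ using Lemmas \ref{lem-mt-001}, \ref{lem-mt-004}, \ref{lem-mt-005} to obtain the estimate for a.e.\ $R$, then let $R\to 0$. The only point you flag as delicate---that $\fint_{S_R} u_t \to \nu_u(0)$---is handled implicitly in the paper and is indeed routine: integrating $u_t$ over $S_R$ is exactly $\d_t S_u(t)$ at points of differentiability of the convex function $S_u$, whose limit is $\nu_u(0)$ by \eqref{pre-0001}.
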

\begin{proof}
Let a function $u\in \cP(B_1)$ be uniformly directional Lipschitz continuous near the origin.  
For a small $\delta$ fixed, 
we consider the sequence of its standard regularizations $u_{\ep}: = u * \rho_{\ep}$, 
and then it is in the family $\cP^{2}(B_{1-\delta})$ for each $0<\ep<\delta$. 

Thanks to the estimate in Theorem \ref{thm-mt-000}, 
we obtain the following estimate for each $\ep$ and $R= e^{-A}$:  
\begin{equation}
\label{sub-011}
\int_{S_R} d^c u_{\ep} \wedge dd^c u_{\ep} 
\leq  4\pi^2 L_A(u_{\ep})\cdot \left( \fint_{S_R} u_{\ep, t} \right).
\end{equation}

Let $\ep$ approach zero.
Then it follows from 
Lemma \ref{lem-mt-001} and \ref{lem-mt-002} 
that the L.H.S. of equation \eqref{sub-011} converges 
to $\int_{B_R}(dd^c u)^2$ for almost all radius $R$.  
Furthermore, 
Lemma \ref{lem-mt-004} and \ref{lem-mt-005} 
imply that the R.H.S. is controlled by 
$$ 4\pi^2 L_A(u) \cdot \left( \fint_{S_R} u_{t} \right) + C\ep, $$
for some uniform constant $C>0$, 
and hence we obtain 
\begin{equation}
\label{sub-012}
\pi^{-2} \cdot \mbox{MA}(u)(B_R) \leq 4 L_A(u) \cdot \left( \fint_{S_R} u_t \right), 
\end{equation}
for almost all $R\in (0,1)$. 
Observe that the Monge-Amp\`ere mass of $u$ is non-decreasing in $R$,
and then our result follows by taking $A\rightarrow +\infty$ or $R\rightarrow 0$. 
 
\end{proof}

Hence the zero mass conjecture follows under the same condition. 

\begin{cor}
\label{cor-mt-001}
Suppose a function $u\in\cP(B_1)$ is uniformly directional Lipschitz continuous near the origin. 
Then we have 
$$ \nu_u(0) =0 \Rightarrow \tau_u(0) =0.  $$
\end{cor}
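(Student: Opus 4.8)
The plan is to deduce this directly from the quantitative upper bound in Theorem \ref{thm-mt-001}, which asserts $\tau_u(0) \leq 4\k_u(0)\cdot\nu_u(0)$ for every $u\in\cP(B_1)$ in $\bC^2$ that is uniformly directional Lipschitz continuous near the origin. So the only things to check are that the right-hand side is a well-defined finite product and that it vanishes under the hypothesis $\nu_u(0)=0$.

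First I would observe that the uniform directional Lipschitz hypothesis is precisely the statement that $L_A(u) = || ru_r ||_{L^{\infty}(\ol B_R)} < +\infty$ for some large $A = -\log R$ (Definition \ref{def-pre-003}); since $L_A(u)$ is non-increasing in $A$ and bounded below by $0$, the limit $\k_u(0) = \lim_{A\to\infty} L_A(u)$ exists and lies in $[0,+\infty)$. In particular $\k_u(0)$ is a finite non-negative real number, which is the observation recorded just before Theorem \ref{thm-int-001}.

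Next, the Lelong number $\nu_u(0)$ is always a finite non-negative real number (by \eqref{pre-0001} together with the convexity and monotonicity of $S_u$ and $V_u$ in $t=\log r$), so under the assumption $\nu_u(0)=0$ the product $4\k_u(0)\cdot\nu_u(0)$ equals $0$. Substituting into the inequality of Theorem \ref{thm-mt-001} gives $\tau_u(0)\leq 0$. On the other hand, $\tau_u(0)$ is by definition $\pi^{-(n+1)}$ times the mass at the origin of $(dd^c u)^{n+1}$, a positive Borel measure by the Bedford--Taylor/Demailly theory, so $\tau_u(0)\geq 0$. Combining the two inequalities yields $\tau_u(0)=0$.

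There is essentially no obstacle here beyond Theorem \ref{thm-mt-001} itself; the only mild points to keep in mind are that $\k_u(0)$ could in principle be $0$, in which case the conclusion holds trivially (and already follows from $\k_u(0)=0$ alone), and that the hypothesis of Theorem \ref{thm-mt-001} is exactly the uniform directional Lipschitz continuity assumed in the statement, so no additional regularity — in particular not the $C^2$ condition defining $\cP^{2}(B_1)$ — needs to be imposed, the passage from $\cP^{2}$ to $\cP$ having already been carried out inside the proof of Theorem \ref{thm-mt-001} via Lemmas \ref{lem-mt-001}--\ref{lem-mt-005}.
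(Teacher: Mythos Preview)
Your proof is correct and is exactly the argument the paper has in mind: the corollary is stated immediately after Theorem \ref{thm-mt-001} with no proof beyond the remark that the zero mass conjecture ``follows under the same condition,'' i.e., one simply plugs $\nu_u(0)=0$ into the inequality $\tau_u(0)\leq 4\k_u(0)\cdot\nu_u(0)$ together with the finiteness of $\k_u(0)$ and the non-negativity of $\tau_u(0)$.
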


\subsection{Fourier expansions}
The example in equation \eqref{ex-001} shows that 
the uniform directional Lipschitz continuity may fail in general.  
Then one possible approach 
is to consider the Fourier expansion of the alternating part 
of $u$ along the $S^1$-direction, i.e. we try to write $v:= u-u_s$ 
under a complex Hopf-coordinate as 
$$ v \sim \sum_{k=1}^{\infty} \left( \cos k\theta \cdot v_k  +   \sin k\theta \cdot w_k  \right), $$
where $v_k$ and $w_k$ are functions of $(r, \z, \bar\z)$ only. 
In other words, the $S^1$-symmetrization $u_s$ is exactly the constant term 
in the Fourier series. 
 
However, 
this Fourier expansion rather depends on the chosen local coordinate, 
and it may vary under different local embeddings. 
For instance, we can consider the following function $u\in \cP(B_1)$ in $\bC^2$: 
\begin{equation}
\label{fe-0001}
u(z^0, z^1): = u_s(z^0, z^1) +  \Re z^1,  
\end{equation}
where $u_s$ is any function in $\cF(B_1)$. 

On the one hand, 
the complex Hopf-coordinate that we usually mentioned 
actually corresponds to the local embedding: 
\begin{equation}
\label{fe-000}
 (r, \theta, \z, \bar\z) \rightarrow \left( \frac{  r e^{\frac{i}{2}\theta } ( |\z| / \z)^{1/2}}{(1+ |\z|^2)^{1/2}},  \ \
\frac{  r e^{\frac{i}{2}\theta } (|\z| \cdot \z)^{1/2}}{(1+ |\z|^2)^{1/2}}  \right), 
\end{equation}
and then the expansion of the alternating part of $u$ is  
\begin{eqnarray}
\label{fe-001}
v(z) =  \cos\left(\frac{\theta}{2}\right) \left\{ r  \cos\left(\frac{\vp}{2}\right)  \sin\left(\frac{\eta}{2}\right)  \right\}
- \sin\left(\frac{\theta}{2}\right) \left\{ r  \sin\left(\frac{\vp}{2}\right)  \sin\left(\frac{\eta}{2}\right)  \right\}, 
\nonumber\\
\end{eqnarray}
where $\vp, \eta$ are the Euler angles of $S^2$ and we have 
$ \z = \tan(\eta/2)e^{i\vp}$. 

On the other hand, 
there is another often used local embedding as 
\begin{equation}
\label{fe-0011}
(r, \theta', \z, \bar\z) \rightarrow \left( \frac{re^{i\theta'} \z}{(1+ |\z|^2)^{1/2}},  \frac{re^{i\theta'}}{(1+ |\z|^2)^{1/2} }  \right). 
\end{equation}
Then the Fourier expansion under this embedding is simply  
\begin{equation}
\label{fe-002}
v(z) =  \frac{r}{(1+ |\z|^2)^{1/2}} \cos\theta'. 
\end{equation}

Therefore, we need to clarify a ``nice'' complex Hopf-coordinate, 
before discussing about the Fourier expansion. 

Let $p$ denote the fiber map of the circle bundle $ S^1\hookrightarrow S^3\xrightarrow{p} S^2 $.
Then a complex Hopf-coordinate  means that 
there exists an open set $U\subset \bC\bP^1$,
and a local trivialization of the fiber map $p$ as 
$$ \phi: S^1\times U \rightarrow p^{-1} (U) \subset S^3, $$
such that the induced map  
$$ \Phi: \bR_+ \times S^1\times U \rightarrow (\bC^2)^* $$
defined as $\Phi (r, \theta, \z, \bar\z): = (r, \phi(\theta, \z, \bar\z))$ is a local embedding. 
Moreover, we say that this complex Hopf-coordinate is \emph{proper}
if the the complement of $U$ has measure zero in $\bC\bP^1$. 

For instance, the two local embeddings in equation \eqref{fe-000} and \eqref{fe-0011}
both induce proper complex Hopf-coordinates. 
Inspired from the example in equation \eqref{fe-002},
we introduce the following concept as a first attempt.

Let $f(\theta)$ be a continuous function on the unit circle $S^1$ that its average vanishes, 
and we write 
\begin{equation}
\label{fe-003}
\sup_{\theta} f = K,\ \ \ \mbox{and} \ \ \ \inf_{\theta} f= -k. 
\end{equation}
 
\begin{defn}
\label{defn-sf-001}
For a function $u\in \cP(B_1)$,
we say that $u$ separates the $S^1$-direction in its alternating part, 
if we can write $u$ near the origin under 
a proper complex Hopf-coordinate as 
$$ u: = u_s + f(\theta) v$$
where $v: = v(r, \z, \bar\z)$ is a circular symmetric $W^{1,2}_{loc}$-function.
\end{defn}

The upshot is that this kind of functions
in fact satisfy our uniform continuity condition, 
and the zero mass conjecture follows. 

\begin{theorem}
\label{cor-sf-001}
Suppose a function $u\in \cP(B_1)$ separates the $S^1$-direction in its alternating part. 
Then it is uniformly directional Lipschitz continuous near the origin, 
and we have the estimate
$$  \tau_u(0) \leq 4 \k_u(0)\cdot \nu_u(0).  $$
In particular, the zero mass conjecture follows in this case. 
\end{theorem}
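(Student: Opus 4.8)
The plan is to reduce the statement to Theorem~\ref{thm-mt-001} by showing that a function $u$ which separates the $S^1$-direction in its alternating part, say $u = u_s + f(\theta) v$ under a proper complex Hopf-coordinate, is automatically uniformly directional Lipschitz continuous near the origin; once $\k_u(0) < +\infty$ is established, the estimate $\tau_u(0) \leq 4\k_u(0)\nu_u(0)$ and hence the zero mass conjecture follow verbatim from Theorem~\ref{thm-mt-001} and Corollary~\ref{cor-mt-001}. So the entire content is the bound on $L_A(u)$.

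First I would compute $r\d_r u$ in the splitting. Since $r\d_r$ annihilates $f(\theta)$ (it is a purely angular function in the chosen trivialization) we get $r u_r = r\d_r u_s + f(\theta)\, (r\d_r v)$ on the coordinate patch $p^{-1}(U)$, hence pointwise
\begin{equation}
\label{sf-plan-001}
|r u_r| \leq |r\d_r u_s| + \max\{K, k\}\cdot |r\d_r v|,
\end{equation}
with $K,k$ as in \eqref{fe-003}. The first term is controlled by the maximal directional Lelong number: since $u_s \in \cF(B_1)$ we have $|r\d_r u_s| = |{\d_t u_s}| \leq M_A(u_s) = M_A(u)$ on $\ol B_R$ by \eqref{gc-0050}, and this is finite for every $A>0$. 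For the second term I would bound $|r\d_r v|$ using the strong maximal directional Lelong number: because $v$ is $S^1$-invariant, $\hat{v}(t,\z) = v(e^t,\z,\bar\z)$ and the convexity/monotonicity estimate \eqref{gc-004} gives $\d_t^+ \hat v|_{t=-A} \leq N_A(v) < +\infty$ by Lemma~\ref{lem-gc-001}. The point is that $v$ itself can be recovered from $u$ on $U$ by an explicit algebraic operation (e.g. $f(\theta)v$ is the alternating part of $u$ in the trivialization, and one divides by $f$ where $f\neq 0$, or better one isolates $v$ via a Fourier coefficient of $u-u_s$ in $\theta$ against $f$), so $v \in \cP(B_1)$-type control is inherited from $u$, and in particular $N_A(v)$ is finite for all $A$. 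Combining, $L_A(u) \leq M_A(u) + \max\{K,k\}\, N_A(v) < +\infty$ for every $A > A_0$, which is exactly uniform directional Lipschitz continuity, and then $\k_u(0) = \lim_{A\to\infty} L_A(u)$ is a finite number.

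With $\k_u(0) < +\infty$ in hand, Theorem~\ref{thm-mt-001} applies directly and yields $\tau_u(0) \leq 4\k_u(0)\nu_u(0)$; if moreover $\nu_u(0) = 0$ then $\tau_u(0) = 0$ by Corollary~\ref{cor-mt-001}, which is the zero mass conjecture in this class. The one subtlety to address is that the coordinate patch $U$ is not all of $\bC\bP^1$: the decomposition $u = u_s + f(\theta)v$ only holds on $p^{-1}(U)$. But since the complex Hopf-coordinate is \emph{proper}, $\bC\bP^1 \setminus U$ has measure zero, so the fibered integrals over $S_R$ appearing in the proof of Theorem~\ref{thm-mt-000} and Theorem~\ref{thm-mt-001} are unaffected, and the pointwise bound on $r u_r$ extends to $\ol B_R$ off a null set — which is all that $L_A(u) = \|ru_r\|_{L^\infty}$ needs, interpreting the $L^\infty$-norm as essential supremum (consistent with Definition~\ref{def-pre-003}).

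The main obstacle I anticipate is making rigorous the claim that $v$ inherits enough regularity and finiteness from $u$ — i.e. that $N_A(v)$ is finite. One cannot simply divide $u - u_s$ by $f(\theta)$ because $f$ vanishes somewhere on $S^1$; instead I would extract $v$ by integrating $u-u_s$ against $f(\theta)\,d\theta$ over the circle (which, since $f$ has zero average and $v$ is $\theta$-independent, returns $v$ times the finite positive constant $\|f\|_{L^2(S^1)}^2$, after subtracting the contribution of the other Fourier modes — so in the genuinely single-frequency situation of Definition~\ref{defn-sf-001} this is exact). From $u$ locally bounded on $B_1^*$ one then gets $v$ locally bounded on the relevant punctured region, its $S^1$-invariance and plurisubharmonicity-type convexity of $t\mapsto v(e^t,\z,\bar\z)$ are preserved, and Lemma~\ref{lem-gc-001} applied to $v$ delivers $0 \leq N_A(v) < +\infty$. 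The remaining bookkeeping — that $\d_t v$ is genuinely the a.e.\ derivative of a $W^{1,2}_{loc}$ function, as assumed in Definition~\ref{defn-sf-001}, so that the pointwise inequality \eqref{sf-plan-001} is valid a.e.\ and integrates correctly — is routine given Blocki's theorem and the slicing lemmas \ref{lem-mt-003}--\ref{lem-mt-005}.
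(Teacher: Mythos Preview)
There is a genuine gap in your approach. You want to bound $|r\d_r v|$ by invoking $N_A(v)$ and the convexity estimate \eqref{gc-004}, but both of these require $v$ to be plurisubharmonic: Lemma~\ref{lem-gc-001} is stated for $u\in\cP(B_1)$, and \eqref{gc-004} rests on the log-convexity of $t\mapsto \hat v(t,\z)$, which in turn comes from subharmonicity of the restriction to each complex line. Definition~\ref{defn-sf-001} only asserts that $v$ is a circular symmetric $W^{1,2}_{loc}$-function --- there is no plurisubharmonicity, hence no convexity in $t$, hence no reason for $N_A(v)$ to be finite or for $\d_t^+ v \leq N_A(v)$ to hold. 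Your proposed fix, recovering $v$ as $\|f\|_{L^2}^{-2}\int_{S^1}(u-u_s)f\,d\theta$, does extract $v$ correctly, but it just hands you back the same non-plurisubharmonic $v$; integration against a \emph{signed} weight destroys subharmonicity, so you cannot import the Lelong-number machinery this way. Moreover, even granting $\d_t^+ v \leq N_A(v)$, you would still need a \emph{lower} bound on $\d_t v$, and nothing in your outline supplies one.

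The paper's proof avoids this by never applying the Lelong-number quantities to $v$ at all. Instead it works with $\hat u := \sup_{\theta} u$ and observes the exact identity
\[
\hat u - u_s \;=\; \sup_{\theta}\{f(\theta)v\} \;=\; \tfrac{K+k}{2}\,|v| \;+\; \tfrac{K-k}{2}\,v,
\]
which is $Kv$ where $v>0$ and $-kv$ where $v<0$. Now $\hat u$ and $u_s$ are both genuinely controlled by the plurisubharmonicity of $u$: one has the two-sided bound $-M_A(u) \leq \d_t^+\hat u - \d_t^+ u_s \leq N_A(u)$. Differentiating the identity a.e.\ (using $v,|v|\in W^{1,2}_{loc}$) and dividing by the positive constants $K,k$ yields two-sided $L^\infty$ bounds on $v_t$. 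This is the missing idea: transfer the problem to $\hat u - u_s$, where both terms inherit convexity and finiteness from $u\in\cP(B_1)$, rather than to $v$, which enjoys neither.
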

\begin{proof}
We note that the symmetric part $u_s$ always has uniformly bounded $t$-derivative. 
Then it suffices to prove that the first derivative  
$ \d_t v$ is in fact an $L^{\infty}$-function in a smaller ball. 

To this end, 
we fix a complex direction $\z$ that is in the defining area of 
the proper complex Hopf-coordinate. 
Then we can write the alternating part for almost all points $\z\in \bC\bP^1$ as 
$$ u-u_s = f(\theta)v.$$

If the function $f(\theta)$ 
is identically zero, then we have $u =u_s$ almost everywhere. 
This means that we are back to the $S^1$-invariant case. 

Hence we can assume that the function $f(\theta)$
is non-trivial. 
It follows that 
the two constants $K$ and $k$ must be positive. 
Take a new constant as 
$c: = (K-k)/2$, 
and we denote $g$ by the following function
$$ g(\theta): = f(\theta) -c, $$
and then we have 
\begin{equation}
\label{sf-0010}
\sup_{\theta} g = -\inf_{\theta} g = (K+k)/2.
\end{equation}
Hence the alternating part can be rewritten as 
$$ u - u_s = c v + g(\theta)v,$$
where $u_s$ and $v$ are both $S^1$-invariant functions. 
 
Next we take the difference between the supreme of $u$ and its average along the circle. 
Thanks to equation \eqref{fe-003} and \eqref{sf-0010}, we have  
\begin{equation}
\label{sf-001}
\sup_{S^1} u - u_s = c v + \sup_{\theta } \left\{ v\cdot g(\theta)  \right\} = \frac{1}{2}(K+k) |v| + \frac{1}{2}(K-k) v.
\end{equation}
Recall that we denoted $\hat u$ by the supreme as  
$$ \hat u(t, \z) = \sup_{\theta} u(e^t, \theta, \z, \bar\z). $$
Thanks to Lemma \ref{lem-gc-001}, we obtain
\begin{equation}
\label{sf-002}
- M_A (u) \leq (\d_t^+ \hat u - \d_t^+ u_s )|_{t\leq -A} \leq N_A (u),
\end{equation}
since the numbers $M_A$ and $N_A$ are both non-increasing in $A$.


Then we note that the functions $v$ and hence $|v|$ are
 in the $W^{1,2}_{loc}(B_1)$ Sobolev space, see Lemma 7.6 in \cite{GT}. 
This further implies that their first derivatives are $L^2$-functions, 
and exist in the classical sense for almost all points. 
Therefore, the following equation holds for almost all points: 
$$   \frac{1}{2} (K+k) \d_t |v| + \frac{1}{2} (K-k)\d_t  v =     
\left\{ \begin{array}{rcl}
        K v_t & \mbox{for} &  v>0, \\
         0  & \mbox{for} &  v=0, \\
         -k v_t & \mbox{for} & v<0.
                \end{array}\right. $$
It follows that we have the bound almost everywhere as 

\begin{equation}
\label{sf-0030}
-   \max \left\{ \frac{M_A(u)}{K}, \frac{N_A(u)}{k} \right\} \leq   r\d_r v \leq \max \left\{ \frac{N_A(u)}{K}, \frac{M_A(u)}{k} \right\}. 
\end{equation}

In other words, 
we have proved that the $L^2$-function $r\d_r v$ is in fact uniformly bounded 
almost everywhere, 
and hence is an $L^{\infty}$-function in a ball. 
Hence our result follows. 


\end{proof}

\section{Remarks}
\smallskip

In this section, 
we are going to make several remarks to 
reveal further difficulties along this approach. 
Moreover, a conjecture about plurisubharmonic functions with finite frequencies will be given, 
perhaps as a middle step towards the zero mass conjecture.

\subsection{Examples}
In Example \ref{exm-001},
we have seen a function in the family $\cP(B_1)$ that is not uniformly directional Lipschitz continuous. 
In the following, another such example will be provided in $\bC^2$, even if it has certain symmetry, see \cite{CG09}. 

\begin{example}
\label{exm-002}
Let $z:= (z^0, z^1)$ be the complex Euclidean coordinate in $\bC^2$, 
and then we consider the following function for any integer $n\geq2$:
\begin{equation}
\label{ex-010}
u(z):= \frac{1}{2n} \log\left( |z^1 - (z^0)^n |^2 + |z^1|^{2n} \right). 
\end{equation}
\end{example}
It is clear that this function is in the family $\cP(B_1)$, 
and it has the so called $(1,n)$-symmetry. 
Moreover, 
it can be transformed into a circular symmetric function, 
after passing to a finite holomorphic cover near the origin, see \cite{Li23}.
However, we claim that this function fails to be uniformly directional Lipschitz continuous near the origin. 
This shows that the Sasakian structure is not fit to
the change of holomorphic coordinates. 

Consider the following complex Hopf-coordinate: 
$$ z^0: = \frac{ r e^{i\theta}}{(1+ |\xi|^2)^{1/2}}, \ \ \  z^1: =  \frac{ r e^{i\theta} \cdot \xi }{(1+ |\xi|^2)^{1/2}},$$
where $\xi: = z^1 / z^0 $ is a holomorphic coordinate on the open set $\{z^0 \neq 0 \}$. 
Then we further compute as 
\begin{eqnarray}
\label{ex-011}
z^1- (z^0)^n &=& \frac{ r e^{i\theta} \cdot \xi }{(1+ |\xi|^2)^{1/2}} - \frac{ r^n e^{in\theta}  }{(1+ |\xi|^2)^{n/2}}
\nonumber\\
&=&  re^{i\theta} \cdot   \frac{ \xi (1+ |\xi|^2)^{\frac{n-1}{2}} - r^{n-1} e^{i(n-1)\theta} }{ (1 + |\xi|^2)^{n/2}}. 
\end{eqnarray}
It follows that we can infer the equation: 
\begin{eqnarray}
\label{ex-012}
&& \left| z^1- (z^0)^n \right|^2 + |z^1|^{2n}  
\nonumber\\ 
&=& r^2 \frac{|\xi|^2(1+ |\xi|^2)^{n-1}  + r^{2n-2}(1+ |\xi|^{2n})  - 2 r^{n-1}\Re\{e^{-i(n-1)\theta}\xi \} (1+ |\xi|^2)^{\frac{n-1}{2}} }{(1+ |\xi|^2)^n}.
\nonumber\\
\end{eqnarray}
Moreover, we have 
\begin{eqnarray}
\label{ex-013}
&& \frac{1}{2n-2} \left( 2nru_r -2 \right)  
\nonumber\\ 
&=& \frac{r^{2n-2}(1+ |\xi|^{2n}) -  r^{n-1} \Re\{e^{-i(n-1)\theta}\xi \} (1+ |\xi|^2)^{\frac{n-1}{2}}  }
{|\xi|^2(1+ |\xi|^2)^{n-1}  + r^{2n-2}(1+ |\xi|^{2n})  - 2 r^{n-1}\Re\{e^{-i(n-1)\theta}\xi \} (1+ |\xi|^2)^{\frac{n-1}{2}} }.
\nonumber\\
\end{eqnarray}
If we further take the following two complex numbers 
$$a: = \xi (1+ |\xi|^2)^{\frac{n-1}{2}}, \ \ \ b:= r^{n-1} e^{i(n-1)\theta}, $$
then equation \eqref{ex-013} can be re-written as  
\begin{equation}
\label{ex-014}
 1+ \frac{\Re(a\bar b) - |a|^2}{| a- b|^2 + r^{2n-2}|\xi|^{2n}}.
\end{equation}
Pick an angle $\theta$ such that we have $\Re(a\bar b)= |a||b|$, 
and then it boils down to estimate the term 
\begin{equation}
\label{ex-015}
\frac{|a| (r^{n-1}-|a|)}{ \left(r^{n-1} - |a| \right)^2 + r^{2n-2}|\xi|^{2n}}.
\end{equation}
However, we can choose $r$ and $\xi$ small enough such that $|a| = (1- k^{-1})r^{n-1}$ for an integer $k>0$,
and equation \eqref{ex-015} further reduces to  
\begin{equation}
\label{ex-016}
\frac{ (k-1) r^{2n-2}}{  r^{2n-2} + k^2 O(r^{2n^2-2})} = \frac{k-1}{1+ k^2 O(r^{2n^2 -2n})} \geq \frac{k-1}{2},
\end{equation}
for all $r$ small enough. 
Then it is clear that equation \eqref{ex-016} can be arbitrarily large, 
and our claim follows.

\subsection{Higher dimensional cases}
\label{sec-61}

Next we would like to discuss some difficulties on the estimate in higher dimensions, e.g. in $\bC^3$. 
Consider the decomposition formula in Theorem \ref{thm-df-001}, 
and then we have the term for $k=2$ as 
$$ ( 2rX_0 u )^3 + (2r \bar X_0 u)^3 =  2 (u^3_t - 12 u_t u_{\theta}^2 ).$$
Hence we can still use the Pohozaev inequality to estimate this term, 
since the order of $u_{\theta}$ is even and we already have $L^{\infty}$-bound on $u_t$. 

However, we have the following mixed term for $k=1$ as 
\begin{equation}
\label{rem-001}
\int_{S_r} \left( 4|u_{\theta}|^2 - |u_t|^2 \right) \omega \wedge \Theta \wedge d\theta.
\end{equation}
Unfortunately, there is no way to use the Pohozaev inequality, 
since the $(1,1)$-form $\Theta$ involves the $t$-variable in a complicated way.
Therefore, we need to consider to add one more condition on the uniform bound of the derivative $| u_{\theta} |$ at this time.

Moreover, the last term is the integral for $k=0$ as 
\begin{equation}
\label{rem-002}
\int_{S_r} (ru_r) \Theta^2 \wedge d\theta. 
\end{equation}
Here we still need to invoke the positivity of $\Theta$ to perform estimates. 
However, the integration by parts on the $S^1$-direction for the form $\Theta^2$ is no longer easy, 
since it involves quadratic terms of the second derivatives of $u$. 

In conclusion, 
our computations indicates that it would be easier 
to find counter-examples to the conjecture in higher dimensions, 
if there is one. 

\subsection{Finite frequencies}
Finally, we put some remarks on a plurisubharmoinc function and its Fourier expansions. 
Like in Definition \ref{defn-sf-001}, 
we say that a function $u\in \cP(B_1)$ has finite frequencies in its alternating part,  
if it can be written near the origin under a proper complex Hopf-coordinate as 
$$ u : = u_s +\sum_{k=1}^m \left( \cos k\theta \cdot v_k + \sin k\theta \cdot w_k \right). $$

As we have seen from the above,
there are plenty of such examples. 
Inspired from Corollary \ref{cor-sf-001}, 
we would like to arise a weaker version of the zero mass conjecture.

\begin{conj}
\label{conj-001}
Suppose a function $u\in \cP(B_1)$ has finite frequencies in its alternating part. 
Then we still have an upper-bound estimate on its Monge-Amp\`ere mass, and it follows
$$ \nu_u(0) =0 \Rightarrow \tau_u(0) =0.  $$
\end{conj}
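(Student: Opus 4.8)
The plan is to deduce Conjecture \ref{conj-001} from Theorem \ref{thm-mt-001} by showing that a function with at most $m$ frequencies in its alternating part is automatically uniformly directional Lipschitz continuous near the origin. Since $L_A$ is non-increasing in $A$, finiteness of $L_A(u)$ for a single $A$ already gives $\k_u(0)<+\infty$, whereupon Theorem \ref{thm-mt-001} yields the upper-bound estimate $\tau_u(0)\le 4\k_u(0)\nu_u(0)$, and in particular $\nu_u(0)=0\Rightarrow\tau_u(0)=0$.

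So the whole problem becomes: if, on a proper complex Hopf-coordinate near the origin, $u = u_s + \sum_{k=1}^m(\cos k\theta\,v_k + \sin k\theta\,w_k)$ with $v_k,w_k$ circular symmetric, then $r\partial_r u\in L^\infty(\overline{B}_R)$ for some $R$. I would argue one complex line at a time. Fix $\z$ in the (full measure) base of the chart and set $h(t,\theta):=u|_{\l_\z}$, a subharmonic function on a punctured disk which, by hypothesis, is for each $t$ a trigonometric polynomial of degree $\le m$ in $\theta$, say $h=\sum_{|k|\le m}c_k(t)e^{ik\theta}$ with $c_0=u_s|_{\l_\z}$. Subharmonicity reads $\partial_{tt}h+\partial_{\theta\theta}h\ge 0$; the left side is again a trigonometric polynomial of degree $\le m$ in $\theta$ which is pointwise nonnegative, so its Fourier coefficients are dominated by its mean, giving
$$ c_0''(t)\ge 0,\qquad |c_k''(t)-k^2 c_k(t)|\le c_0''(t)\quad(1\le k\le m). $$
Moreover $c_0=u_s|_{\l_\z}$ is convex, non-decreasing, with $\partial_t c_0\to\nu_{u|_{\l_\z}}(0)$ as $t\to-\infty$, so $\int_{-\infty}^{-A}c_0''\,dt\le M_A(u)$ uniformly in $\z$; and from $\sup_{B_1}u=-1$ one gets $|c_k(t,\z)|\le -c_0(t,\z)$, which by convexity of $c_0$ grows at most linearly in $|t|$, uniformly in $\z$.

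Given this, I would treat $c_k''-k^2 c_k=\mu_k$ as a linear ODE on $(-\infty,-A]$ with forcing $\mu_k\in L^1$ of norm $\le M_A(u)$ and with sub-exponential solution. Sub-exponential growth kills the $e^{-kt}$-mode; variation of parameters bounds the particular solution and its $t$-derivative by $M_A(u)/k$ and $M_A(u)$; and evaluating the remaining $e^{kt}$-coefficient at $t=-A$, where $|c_k(-A,\z)|\le\sup_{S_{e^{-A}}}|u|$, bounds $\partial_t c_k(t,\z)$ for $t\le -A$ by a constant $C(m,A)$ independent of $\z$. Summing over $|k|\le m$,
$$ \left|r\partial_r u(z)\right| = \left|\sum_{|k|\le m}\partial_t c_k(t,\z)\,e^{ik\theta}\right| \le C(m,A)\quad\text{on }\overline{B}_{e^{-A}}, $$
which is the uniform directional Lipschitz continuity required by Theorem \ref{thm-mt-001}.

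The hard part will be the regularity bookkeeping, which is presumably why the statement is delicate: the clean argument above uses $\Delta h$ being a nonnegative \emph{function}, so it genuinely needs $u\in\cP^2(B_1)$, and for general $u\in\cP(B_1)$ I would run everything on the standard regularizations $u_{\ep}=u*\rho_{\ep}$. These are smooth on $B_{1-\ep}$ and, because Euclidean mollification commutes with the $S^1$-action, still carry at most $m$ frequencies in their alternating part; one should get $L_A(u_{\ep})\le C(m,A)$ with $C$ uniform in $\ep$ (using Lemma \ref{lem-mt-005} to control $M_A(u_{\ep})$ and the stability of $\sup_{S_R}|u_{\ep}|$), and then pass to the limit via $u_{\ep}\to u$ in $W^{1,2}_{loc}$ and lower semicontinuity of the $L^\infty$-norm. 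A second point requiring care is the exact notion of a proper complex Hopf-coordinate: one must check that the fibre is parametrized by the standard circle action (a $\z$-dependent phase twist is harmless, since it only multiplies $c_k$ by a unimodular factor), so that the finite-frequency structure survives restriction to $\l_\z$ and mollification. Finally, although Theorem \ref{thm-mt-001} is stated in $\bC^2$, the per-line analysis above is dimension-free, so in combination with the higher-dimensional upper bound of \cite{DLLWZ} the same scheme should settle Conjecture \ref{conj-001} in every dimension.
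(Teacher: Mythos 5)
The first thing you should know is that the paper does not prove this statement: it is posed as Conjecture \ref{conj-001}, explicitly left open, and the only case the authors settle is the separated case $u=u_s+f(\theta)v$ of Theorem \ref{cor-sf-001}, whose proof hinges on the product structure through the identity $\sup_{S^1}u-u_s=\frac12(K+k)|v|+\frac12(K-k)v$ and the quantities $M_A,N_A$. So there is no proof in the paper to compare yours against; what you have written is a proposed resolution of an open conjecture, by a mechanism genuinely absent from the paper. Your key idea --- restrict to a line $\l_\z$, use that a nonnegative trigonometric polynomial has all Fourier coefficients dominated by its mean to get $|c_k''-k^2c_k|\le c_0''$, note $\int_{-\infty}^{-A}c_0''\le M_A(u_s)<+\infty$ uniformly in $\z$ by the cited result of \cite{Li23}, and then solve the linear ODE with $L^1$ forcing, using sub-exponential growth of $c_k$ to kill the $e^{-kt}$ mode --- is correct step by step for a smooth plurisubharmonic function, and it is exactly where finiteness of the number of frequencies enters: the resulting bound on $L_A(u)$ grows with $m$, consistent with the infinite-frequency counterexamples such as Example \ref{exm-001}. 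This goes strictly beyond Theorem \ref{cor-sf-001}, which cannot handle even two independent modes.

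Three points must be repaired before this is a proof. (i) Your appeal to Lemma \ref{lem-mt-005} is circular as written: that lemma bounds $L_A(u_\ep)$ by $L_A(u)$, the very quantity whose finiteness you are establishing. What you need is $M_A((u_\ep)_s)\le M_A(u_s)+C\ep$, which does follow from the same commutator estimate applied to the circular-symmetric part alone (for which $L_A(u_s)=M_A(u_s)<+\infty$ by \eqref{gc-0050}); say so explicitly, or bypass mollification entirely by running the ODE distributionally with $\mu_k$ the $k$-th Fourier coefficient of the Riesz measure of $u|_{\l_\z}$, dominated by $\mu_0$ as measures. (ii) The paper's definition of a proper complex Hopf-coordinate only requires a trivialization inducing an embedding; your argument needs the fibre parameter to differ from the standard Reeb angle by a $\z$-dependent \emph{affine} shift, so that finitely many frequencies in $\theta$ becomes finitely many intrinsic Fourier modes of $u|_{\l_\z}$, stable under radial mollification. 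For a general circle reparametrization this step fails, so you are proving the conjecture under a particular (reasonable, likely intended) reading of its hypothesis, and you should state that reading. (iii) Minor bookkeeping: in the embedding \eqref{fe-000} the intrinsic angle on $\l_\z$ is $\theta/2$ (whence the factor $4u_{\theta\theta}$ in \eqref{ld-000}), so the ODE is $c_k''-4k^2c_k=\mu_k$; harmless, but it must be consistent throughout. With these repairs your scheme looks to me like a correct proof of Conjecture \ref{conj-001} in $\bC^2$, and of the uniform directional Lipschitz continuity in any dimension; it should be written out in full rather than left as a sketch.
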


\bigskip

\section{Appendix A: Plurisubharmonicity and Sasakian geometry}
\smallskip

There is a natural K\"ahler cone structure on the space $(\bC^{n+1})^*\cong (\bR^{2n+2})^*$
that induces the standard Sasakian structure of the unit sphere $S^{2n+1}$. 
The standard almost complex structure $I$ on $\bR^{2n+2}$ is a $(1,1)$-tensor field as 
$$ I: = \sum_A \left(  \frac{\d}{\d y^A} \otimes dx^A    - \frac{\d}{\d x^A} \otimes dy^A  \right).  $$
Then the \emph{Reeb vector field} is defined as 
$$ \xi_0:= - I (r\d_r), $$
and its dual is the so called \emph{contact 1-form} as 
$$ \eta_0: = I (r^{-1} dr).  $$
Moreover, we also take its normalization as 
$ \eta: = -\eta_0/2$.
There is another $(1,1)$-tensor field $\Phi_0$ that plays the role of the almost complex structure of the cone as
\begin{eqnarray}
\label{ss-001}
\Phi_0: &=& I - (r\d_r) \otimes \eta_0.  
\end{eqnarray}
Then it is clear to see $\Phi_0 (\xi_0) =0$. 
Moreover, the tangent space of the unit sphere can be split as 
$$ TS^{2n+1} = L_{\xi_0} \oplus \cD, $$
where $L_{\xi_0}$ is the trivial line bundle generated by $\xi_0$,  
and $\cD: = \ker (\eta_0)$ is the so called \emph{contact sub-bundle}. 
Then the restriction of $\Phi_0$ to the tangent space $TS^{2n+1}$ gives the following relation: 
$$ \Phi_0^2 = -\mathbb{I} + \xi_0 \otimes \eta_0. $$ 
Combining with the compatible metric cone structure, 
it follows that the \emph{almost CR-structure} $(\cD, \Phi_0|_{\cD})$
is integrable, and we have a splitting as 
\begin{equation}
\label{ss-002}
\cD \otimes  \bC = \cD^{1,0} \oplus \cD^{0,1} \ \ \ \text{with} \ \ \   \overline{\cD^{1,0}} = \cD^{0,1},
\end{equation}
where $\cD^{1,0}$ and $D^{0,1}$ are eigenspaces of $\Phi_0$
with respect to the eigenvalues $i$ and $-i$, respectively. 
Here the integrability condition means the following relation:
$$ \left[ \cD^{1,0}, \cD^{1,0}  \right] \subset \cD^{1,0} \ \ \ \text{and} \ \ \  \left[ \cD^{0,1}, \cD^{0,1}  \right] \subset \cD^{0,1}.$$

For more details about Sasakian geometry, 
the readers are referred to the book by Boyer and Galicki, \cite{BG08}.

\subsection{Local frames and coframes}
The complex Hopf-coordinate on the K\"ahler cone $(\bC^{n+1})^*$ is introduced in \cite{HLX}, 
and we recall it here. 
Consider the holomorphic functions $\z^{\a}: = z^{\a}/ z^0$ on the set $\{ z^0 \neq 0 \}$ for all 
$\a = 1, \cdots, n$. 
Let $r = |z|$ be the radius function, and then a real coordinate system on $(\bR^{2n+2})^*$
can be written as 
$$ (r, \theta, \z, \bar\z): = (r, \theta, \z^1, \cdots, \z^n, \bar\z^1, \cdots, \bar\z^n),  $$
for all $r\in \bR_+, \theta\in \bR$ and $\z^{\a}\in \bC$. 
Here the complex variable $\z$ can be thought of as a point on the projective space $\bC\bP^n$
via the Hopf-fibration. 
Moreover, we have used the following embedding: 
\begin{equation}
\label{bf-001}
z^0: = r e^{\frac{i}{2}\theta} \frac{\varrho(\z ,\bar\z)}{ \left( 1+ \sum_{\b} |\z^{\b}|^2 \right)^{1/2}}; \ \ \ 
z^{\a}: = r e^{\frac{i}{2}\theta} \frac{\z^{\a} \cdot \varrho(\z ,\bar\z)}{ \left( 1+ \sum_{\b} |\z^{\b}|^2 \right)^{1/2}}, 
\end{equation}
where the factor is 
$$ \varrho(\z, \bar\z):= \prod_{\a=1}^n \left( \bar\z^{\a} |\z^{\a}|^{-1}\right)^{\frac{1}{2}}. $$
Then we can introduce a corresponding \emph{local basic function} as 
\begin{equation}
\label{bf-002}
h(\z, \bar\z): = \log\left( 1 + \sum_{\a} |\z^{\a}|^2 \right) - \sum_{\a} \log |\z^{\a}|, 
\end{equation}
and we can also write it as 
\begin{equation}
\label{bf-003}
h(z,\bar z) = \log\left(  \sum_{A} |z^{A}|^2 \right) -\sum_{A} \log |z^{A}|. 
\end{equation}
Furthermore, we obtain a local defining equation of the contact $1$-form as 
\begin{eqnarray}
\label{bf-004}
\eta &=& \frac{1}{4} \left\{ d\theta - i (\d_{\z} h - \dbar_{\z} h )    \right\}
\nonumber\\
&=&   \frac{1}{4} \left\{ d\theta -  \sum_{\a} \cos \k_{\a} \cdot \Im \left( \frac{d\z^{\a}}{\z^{\a}} \right) \right\},
\end{eqnarray}
where we denote $\d_{\z}: = \sum \d / \d \z^{\a}, \dbar_{\z}: = \sum \d/ \d \bar\z^{\a}$ 
for all $\a =1,\cdots, n$ and 
$$ \cos\k_{\a}: = 1- \frac{2|\z^\a|^2}{1+ \sum_{\b} |\z^\b|^2}. $$
In other words, we have 
\begin{equation}
\label{bf-005}
\eta_0 = - \left( d\theta/2 + d^c_{\z}h \right).
\end{equation}
Then it follows 
\begin{equation}
\label{bf-006}
\xi_0 = -2 \d_{\theta},
\end{equation}
and 
\begin{equation}
\label{bf-007}
d\eta = \frac{1}{2} dd^c h = \omega_{FS},
\end{equation}
where $\omega_{FS}$ is the Fubini-Study metric on $\bC\bP^n$ with total volume $\pi^n$. 

Next we are going to construct the standard frame fields with respect to the K\"ahler cone structure. 
First, along the complex line $L_{\z}$ through the origin in a fixed direction $\z$, we define 
\begin{eqnarray}
\label{bf-008}
X_0:&=& \frac{1}{2} \left( \d_r + i r^{-1} \xi_0 \right) =  \frac{1}{2} \left( \d_r - 2i r^{-1} \d_{\theta} \right);
\\
\bar X_{0}:&=& \frac{1}{2} \left( \d_r - i r^{-1} \xi_0 \right)=  \frac{1}{2} \left( \d_r + 2i r^{-1} \d_{\theta} \right). 
\end{eqnarray}
Moreover, on the complexified contact sub-bundle $\cD\otimes \bC$, we have the following frame equations 
corresponding to the splitting in equation \eqref{ss-002}:
\begin{eqnarray}
\label{bf-009}
X_{\a}:&=& \d_{\a} + \frac{i}{4\z^{\a}} \cos\k_{\a} \cdot \xi_0 = \d_{\a} + i ( \d_{\a} h) \d_{\theta};
\\
\bar X_{\a}:&=& \d_{\bar\a} - \frac{i}{4\bar\z^{\a}} \cos\k_{\a} \cdot \xi_0= \d_{\bar\a} - i ( \d_{\bar\a} h) \d_{\theta},
\end{eqnarray}
where the operator $\d_{\a}$ stands for $\frac{\d}{\d \z^{\a}}$ for all $\a = 1, \cdots, n$. 
Then it is standard to check that we have 
$$ I(X_A) = i X_A; \ \ \ I(\bar X_A) = -i \bar{X}_A, $$
for all $A= 0, 1, \cdots, n$. 
In conclusion, 
the following vector fields build a local frame of the tangent space $T(\bR^{2n+2})^*$ as 
$$ \{ X_0, X_{\a}, \bar X_{0}, \bar X_\a \}.$$ 

Moreover, its dual basis in fact builds a coframe as follows:   
\begin{eqnarray}
\label{bf-011}
&& \lambda^0:= dr - ir\eta_0; \ \ \ \bar\lambda^0 : = dr + ir \eta_0; 
\\
&& \lambda^{\a}: = d\z^{\a}; \ \ \ \ \ \ \ \ \ \ \bar\lambda^{\a}: = d\bar\z^{\a};
\end{eqnarray}
Then the $(1,1)$-tensor field $\Phi_0$ restricted to the sphere can be re-written as 
$$ \Phi_0: = i\sum_{\a} X_{\a}\otimes d\z^{\a} - i \sum_{\a} \bar X_{\a} \otimes d\bar\z^{\a}, $$
since we have 
\begin{equation}
\label{bf-012}
\Phi_0(X_\a) = i X_\a; \ \ \ \Phi_0(\bar X_\a) =- i \bar X_\a.
\end{equation}
After a standard computation, we come up with the following commutators: 
\begin{eqnarray}
\label{bf-0130}
&& [X_\a, X_\b] = [\bar X_\a, \bar X_{\b}]=0;
\\
\label{bf-0131}
&& [X_\a,  \d_{r}] = [\bar X_\a, \d_{r}]= [X_\a,  \d_{\theta}] = [\bar X_\a, \d_{\theta}]=0;
\\
\label{bf-0132}
&& [X_\a, \bar X_\b] = i h_{\a\bar\b} \xi_0;
\\
\label{bf-0133}
&& [X_\a, X_0] = [\bar X_\a, X_{0}]=[X_\a, \bar X_0] = [\bar X_\a, \bar X_{0}]= 0;
\\
\label{bf-0134}
&& [X_0, \bar X_0] = -i r^{-2} \d_{\theta}.
\end{eqnarray}

Finally, 
we note that
the vector fields $\{ \xi_0, X_{\a}, \bar X_{\a} \}$
build a frame of the tangent bundle $TS^{2n+1}$
with its dual basis $\{ \eta_0, \lambda^{\a}, \bar\lambda^{\a} \}$. 

\begin{rem}
It is worthy mentioning that the above frames and coframes 
actually come from the Sasakian structure. 
In other words, the same equations will hold under the vary of local coordinates,
possibly with a different basic function $h$ and a renormalizaed angle $\theta$. 
For more details about Sasakian manifolds, see \cite{GKN00}, \cite{GZ12}, \cite{HL21}, \cite{H13} and \cite{HS16}.
\end{rem}

\subsection{The complex Hessian}
Next we are going to compute the complex Hessian of 
a function $u\in \cP^{2}(B_1)$ with respect to the Sasakian structure. 
First, we compute its first derivatives as follows: 
\begin{eqnarray}
\label{ch-001}
\d u &=& (X_0 u) \lambda^0 + (X_\a u) d\z^{\a};
\\
\label{ch-002}
\dbar u &=&  (\bar X_0 u) \bar{\lambda}^0 + (\bar X_\a u) d\bar\z^{\a}.
\end{eqnarray}
Recall the convention $d^c: = \frac{i}{2} (\dbar - \d)$
and hence we have 
\begin{equation}
\label{ch-003}
d^c u = \frac{i}{2}\left\{  (\bar X_0 u) \bar\lambda^0 - (X_0 u) \lambda^0  \right\} + \frac{i}{2}\left\{(\bar X_\a u) d\bar\z^{\a}   - (X_\a u) d\z^{\a} \right\}.
\end{equation}
Equipped with equation \eqref{bf-007}, \eqref{bf-008} and \eqref{bf-009}, 
we obtain 
\begin{equation}
\label{ch-004}
d^c u = u_t \eta - u_{\theta} dt + \frac{1}{2} u_{\theta} dh + d^c_{\z} u,
\end{equation}
where $t=\log r$ and the operator $d_{\z}$ is defined by 
$$  d_{\z}: = \d_{\z} + \dbar_{\z}. $$
Before moving on, 
we need to perform the following computations:  
\begin{equation}
\label{ch-005}
\d \bar \lambda^0 = \d dr  + i \d r \wedge \eta_0 + i r \d \eta_0,
\end{equation}
where 
$$ \d \eta_0 = - \frac{1}{2} (i \ddbar_{\z} h), \ \ \ \mbox{and} \ \ \  \d r = \frac{1}{2} \lambda^0.  $$
Hence we have 
\begin{equation}
\label{ch-006}
\d \bar \lambda^0 = \frac{1}{2r} \lambda^0 \wedge \bar\lambda^0 + r\ddbar_{\z} h.
\end{equation}
Finally, we obtain
\begin{eqnarray}
\label{ch-007}
\ddbar u &=& \d (\bar X_0 u) \wedge \bar\lambda^0 + (\bar X_0 u) \d \bar\lambda^0 + \d (\bar X_{\b} u) \wedge d\bar\z^{\b}
\nonumber\\
&=& \left\{ X_0 \bar X_0 u + \frac{1}{2r} \bar X_0 u \right\} \lambda^0 \wedge \bar\lambda^0
\nonumber\\
&+& \left( X_{\a} \bar X_0 u \right) d\z^{\a} \wedge \bar\lambda^0 + \left( X_0 \bar X_{\a} u \right) \lambda^0 \wedge d \bar\z^{\a}
\nonumber\\
&+& \left\{ X_{\a}\bar X_{\b} u  + r (\bar X_0 u) h_{\a\bar\b} \right\} d\z^{\a} \wedge d\bar\z^{\b}. 
\end{eqnarray}
Thanks to the commutators in equations \eqref{bf-0130} - \eqref{bf-0134}, 
we can further check the following equation:
\begin{eqnarray}
\label{ch-008}
\ddbar u &=& - \bar\d \d u 
\nonumber\\
&=& \left\{ \bar X_0  X_0 u + \frac{1}{2r}  X_0 u \right\} \lambda^0 \wedge \bar\lambda^0
\nonumber\\
&+& \left( \bar X_{\a} X_0 u \right) \lambda^0 \wedge d\bar\z^{\a} + \left( \bar X_0 X_{\a} u \right) d\z^{\a} \wedge \bar\lambda^0
\nonumber\\
&+& \left\{ \bar X_{\b} X_{\a} u  + r (X_0 u) h_{\a\bar\b} \right\} d\z^{\a} \wedge d\bar\z^{\b}. 
\end{eqnarray}
This formula is in fact well known among experts, 
and it is 
the complex Hessian of $u$ under the Sasakian structure.  
Here we use it to give a new description of 
the plurisubharmonicity of a function. 
First we define the following two matrices: 
\\
$$
S(u): = 
\begin{pmatrix}
X_0 \bar X_0 u +  (2r)^{-1} \bar X_0 u & X_0 \bar X_{\b} u \\
\\
X_{\a} \bar X_0 u &  X_{\a}\bar X_{\b} u + r (\bar X_0 u) h_{\a\bar\b}
\end{pmatrix}, 
$$
and 
$$
S^*(u): = 
\begin{pmatrix}
\bar X_0  X_0 u +  (2r)^{-1}  X_0 u & \bar X_{\b} X_0 u \\
\\
 \bar X_0 X_{\a} u &  \bar X_{\b} X_{\a} u + r ( X_0 u) h_{\a\bar\b}
\end{pmatrix}. 
$$
\\
Thanks to equation \eqref{ch-007} and \eqref{ch-008}, 
these two matrices are actually equal, i.e. we have proved that 
$$S(u) = S^*(u),$$ 
and then our positivity result follows.

\begin{prop}
\label{prop-ch-001}
Suppose $u$ is 
a $C^2$-continuous function on a domain $D \subset (\bC^{n+1})^*$. 
Then it is plurisubharmonic, 
if and only if the matrix 
$S(u)$, or its different version $S^*(u)$, is everywhere semi-positive.
Moreover,
it is strictly plurisubharmonic, if and only if the matrix is positive on $D$. 
\end{prop}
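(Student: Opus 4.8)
The plan is to reduce the statement to the classical characterization of plurisubharmonicity in terms of the complex Hessian $(\partial_{A}\bar\partial_{B} u)$ being semi-positive, and then observe that the two representations agree up to a change of frame. Concretely, a $C^2$ function $u$ on $D\subset(\bC^{n+1})^*$ is plurisubharmonic (resp.\ strictly plurisubharmonic) if and only if the complex $(1,1)$-form $\ddbar u$ is semi-positive (resp.\ positive) as a Hermitian form on $T^{1,0}D$ at every point, i.e.\ $\ddbar u(W,\bar W)\geq 0$ for all $W\in T^{1,0}_p D$ (resp.\ $>0$ for $W\neq 0$). This is standard and may be quoted. The entire content of the proposition is then to translate this positivity into the language of the Sasakian frame $\{X_0,X_\a\}$ of $\cD^{1,0}\oplus L_{X_0}$.

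The key computation is already in hand: formula \eqref{ch-007} expresses $\ddbar u$ in the coframe $\{\lambda^0,\bar\lambda^0, d\z^\a, d\bar\z^\b\}$, and its coefficient matrix in this basis is precisely $S(u)$. So the first step is to note that $\{X_0,X_\a\}$ spans $T^{1,0}D$ over $D$ (indeed $I(X_A)=iX_A$ for all $A$, and these $n+1$ vectors are linearly independent since their duals $\{\lambda^A\}$ form a coframe). Writing an arbitrary $W\in T^{1,0}_pD$ as $W=\sum_A c^A X_A$, one then has $\ddbar u(W,\bar W)=\sum_{A,B} c^A\bar c^{B}\, S(u)_{A\bar B}$ by reading off \eqref{ch-007} and pairing $\lambda^A\wedge\bar\lambda^B$ against $X_A\otimes\bar X_B$. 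Hence $\ddbar u\geq 0$ on all of $T^{1,0}D$ if and only if $S(u)$ is semi-positive everywhere, and $\ddbar u>0$ if and only if $S(u)>0$; this gives both the plurisubharmonic and the strictly plurisubharmonic assertions simultaneously. For the equivalent version $S^*(u)$, one invokes the identity $S(u)=S^*(u)$ established just before the proposition (from comparing \eqref{ch-007} with \eqref{ch-008}), so the two matrices have the same signature.

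There are two minor points to handle carefully. First, one should confirm that $\ddbar u$ really is a $(1,1)$-form with respect to the standard complex structure $I$ of $\bC^{n+1}$ — this is automatic since $I$ is the ambient integrable complex structure and the frame $\{X_A\}$ diagonalizes $I$; there is no need to invoke integrability of the CR-structure on $\cD$ beyond what guarantees that $\{X_\a\}$ are genuine $(1,0)$-vectors on $D$. Second, the pluriharmonic/subharmonic characterization of psh functions is usually stated for domains in $\bC^{n+1}$ containing points where $u$ could be $-\infty$; here $u$ is assumed $C^2$ on $D$, so one is simply using that a $C^2$ function is psh iff its restriction to every complex line is subharmonic iff the complex Hessian is positive semi-definite, which is elementary.

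I expect no serious obstacle: the proposition is essentially a bookkeeping statement once \eqref{ch-007} and the frame relations $I(X_A)=iX_A$ are established, both of which are available in the excerpt. The only thing requiring a little care is making the pairing ``$\ddbar u(X_A,\bar X_B)=S(u)_{A\bar B}$'' precise with the chosen normalization conventions (the factor $\tfrac{i}{2}$ in $d^c$, the $(2r)^{-1}$ and $r\,h_{\a\bar\b}$ terms coming from $\d\bar\lambda^0$ in \eqref{ch-006}), but these are exactly the terms already displayed in $S(u)$, so it is a matter of transcription rather than genuine work.
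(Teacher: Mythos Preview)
Your proposal is correct and follows essentially the same approach as the paper: the paper does not give a separate proof of Proposition~\ref{prop-ch-001}, but simply states it as the immediate consequence of formulas \eqref{ch-007}--\eqref{ch-008} and the identity $S(u)=S^*(u)$, exactly as you outline. Your write-up merely makes explicit the standard linear-algebra step (that positivity of $i\ddbar u$ is equivalent to positivity of its coefficient matrix in any $(1,0)$-frame), which the paper leaves implicit; there is no substantive difference in method.
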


Let us take a closer look at the matrix $S(u)$. 
First, the $(1,1)$-entry of this matrix 
is exactly the Laplacian of the restriction of $u$ 
to the complex line $\l_{\z}$ 
through the origin in a direction $\z\in \bC\bP^n$, 
i.e. we have 
\begin{eqnarray}
\label{ch-009}
&&4 (X_0\bar X_0 u + \frac{1}{2r} \bar X_0 u )
\nonumber\\
&=&  u_{rr} + r^{-1} u_r + 4 r^{-2} u_{\theta\theta} 
\nonumber\\
&=& \Delta ( u|_{\l_{\z}}) \geq 0. 
\end{eqnarray}
Then 
we compute the $(n\times n)$-submatrix on the lower-right corner 
  in the complex Hopf-coordinate as 
\begin{eqnarray}
\label{ch-010}
&& X_{\a} \bar X_{\b} u + r(\bar X_0 u) h_{\a\bar\b}
\nonumber\\
&=& u_{\a\bar\b} + \frac{1}{2} u_t h_{\a\bar\b} + i (h_\a u_{\theta\bar\b} - h_{\bar\b} u_{\theta\a}) + h_\a h_{\bar\b} u_{\theta\theta}, 
\end{eqnarray}
and then it gives us a positive $(1,1)$-form $\Theta$ as 
$$ \Theta: = \sum_{\a, \b =1}^n \left( X_{\a} \bar X_{\b} u + r(\bar X_0 u) h_{\a\bar\b} \right) id\z^{\a} \wedge d\bar\z^{\b} \geq 0.$$
Moreover, we can also write down the mixed terms as 
\begin{equation}
\label{ch-011}
 X_{\a} \bar X_{0} u
= \frac{1}{2} \left\{ u_{r\a} - 2h_{\a} r^{-1} u_{\theta\theta}   + i ( 2r^{-1} u_{\theta\a} + h_{\a} u_{r\theta})  \right\},
\end{equation}
and 
\begin{equation}
\label{ch-012}
 X_{0} \bar X_{\a} u
= \frac{1}{2} \left\{ u_{r\bar\a} - 2h_{\bar\a} r^{-1} u_{\theta\theta}   - i ( 2r^{-1} u_{\theta\bar\a} + h_{\bar\a} u_{r\theta})  \right\}.
\end{equation}

Finally, we note that the matrix $S(u)$ boils down to the following, 
if $u$ is circular symmetric: 
\\
$$
S(u): = 
\begin{pmatrix}
 r^{-2}u_{tt}&  \frac{1}{2r} u_{t\bar\b} \\
\\
\frac{1}{2r}u_{t\a} &  u_{\a\bar\b} + \frac{1}{2}u_t h_{\a\bar\b}
\end{pmatrix}.
$$
\smallskip

\section{Appendix B: The decomposition formula}
\smallskip

In this section, 
we are going to compute the following $(2n+1)$-form on the hypersphere $S_r$ in $\bC^{n+1}$, 
and then obtain the decomposition formula for the complex Monge-Amp\`ere mass of a function $u\in \cP^{2}(B_1)$: 
$$ d^cu|_{S_r} \wedge (dd^c u |_{S_r})^n. $$
First we rewrite equation \eqref{ch-003} as 
\begin{equation}
\label{df-001}
d^c u|_{S_r} = (r u_r)\eta + \frac{i}{2} \left( \dbar_{\cD} u - \d_{\cD}u \right), 
\end{equation}
where $\d_{\cD}$ is the exterior derivative towards the directions in $\cD^{1,0}$,
 and $\dbar_{\cD}$ is its complex conjugate. 
Thanks to the commutation relations in equation \eqref{bf-0130},
we still have 
$$ \d_{\cD}^2 =0, \ \ \ \mbox{and} \ \ \ \dbar_{\cD}^2 =0.$$
However, we note that the following operator 
$$\d_{\cD}\dbar_{\cD} + \dbar_{\cD} \d_{\cD}$$ 
is no longer zero, since the commutator in equation \eqref{bf-0132} is non-trivial.  
Moreover, we also recall that the vector fields $\{\xi_0, X_{\a}, \bar{X}_{\a}\}$
form a local frame of the tangent bundle of the hypersphere, 
and the dual $\{ \eta_0, \lambda^{\a}, \bar\lambda^{\a} \}$ is a coframe field on the cotangent bundle. 
Therefore, we can decompose the exterior derivative $d_{S_r}$ on the hypersphere into the following parts: 
\begin{equation}
\label{df-0011}
d_{S_r}: = d_{\xi_0} + \d_{\cD} + \dbar_{\cD}, 
\end{equation}
where $d_{\xi_0}$ denotes by the exterior derivative along the $S^1$-direction.

\subsection{Computations}
First,  
we rewrite equation \eqref{ch-007} as follows
\begin{equation}
\label{df-002}
i\ddbar u|_{S_r} = \Xi  + \Theta, 
\end{equation}
where the $2$-form $\Xi$ is defined as 
\begin{eqnarray}
\label{df-003}
 \Xi: &=& 
 \sum_{\a =1}^n i \left\{ (X_\a \bar X_0 u)   d\z^{\a}\wedge \bar\lambda^0 + (X_0 \bar X_\a u)   \lambda^0\wedge d\bar\z^{\a} \right\}|_{S_r}  
  \nonumber\\
  &=&  \sum_{\a =1}^n \left\{ (X_\a \bar X_0 u) d\z^{\a}  + (X_0 \bar X_\a u) d\bar\z^{\a}  \right\} \wedge (2r\eta)
  \nonumber\\
  &=&  \left\{  \d_{\cD} (\bar X_0 u) +  \dbar_{\cD} (X_0 u)   \right\} \wedge (2r \eta). 
 \end{eqnarray}
Hence it follows 
\begin{equation}
\label{df-004}
( i\ddbar u|_{S_r} )^n = \Theta^n + n \Theta^{n-1}\wedge \Xi,  
\end{equation}
since we have $\eta\wedge \eta =0$. 
Combing with equation \eqref{df-001} and \eqref{df-004},  
we obtain 
\begin{equation}
\label{df-0045}
 d^cu|_{S_r} \wedge ( i\ddbar u|_{S_r} )^n 
 = (ru_r)\eta \wedge \Theta^n + n d^c_{\cD}u \wedge \Xi \wedge \Theta^{n-1},
\end{equation}
where we simply write 
$$ d^c_{\cD}u:= \frac{i }{2}(\dbar_{\cD} u - \d_{\cD} u ). $$

Next we rewrite the $(1,1)$-form $\Theta$ as 
\begin{eqnarray}
\label{df-005}
\Theta &=& i \d_{\cD} \dbar_{\cD} u  + r (\bar X_0 u) \omega
\nonumber\\
&=& - i  \dbar_{\cD} \d_{\cD} u  + r (X_0 u) \omega,
\end{eqnarray}
where $\omega: = dd^c h$ is a K\"ahler form on the base manifold $\bC\bP^n$ with $\omega = 2\omega_{FS}$, 
see equation \eqref{bf-007}. 
Then we can further write the first term on the R.H.S. of equation \eqref{df-0045} as 
\begin{eqnarray}
\label{df-006}
(ru_r) \eta \wedge \Theta^n &=& \frac{1}{4} d\theta\wedge \left( i \d_{\cD} \dbar_{\cD} u  + r (\bar X_0 u) \omega \right)^n (r \bar X_0 u)
\nonumber\\
&+ & \frac{1}{4} d\theta\wedge \left( - i \dbar_{\cD} \d_{\cD} u  + r ( X_0 u) \omega \right)^n (r X_0 u),
\end{eqnarray}
since $\Theta^n$ is an $(n,n)$-form in the base direction. 
Moreover, we have the following expansions: 
\begin{eqnarray}
\label{df-007}
\Theta^n &=& 
\sum_{k=0}^n   \left( \begin{array}{c} n \\ k \end{array} \right) ( i \d_{\cD} \dbar_{\cD} u )^k \wedge \left( r (\bar X_0 u)  \right)^{n-k} \omega^{n-k}
\nonumber\\
&=&
\sum_{k=0}^n   \left( \begin{array}{c} n \\ k \end{array} \right) ( - i \dbar_{\cD} \d_{\cD} u )^k \wedge \left( r (X_0 u)  \right)^{n-k}\omega^{n-k}.
\end{eqnarray}
On the other hand, the second term on the R.H.S. of equation \eqref{df-0045} can be computed as follows:  
\begin{eqnarray}
\label{df-008}
&& (2 n r) d^c_{\cD} u \wedge \left\{ \d_{\cD} (\bar X_0 u) + \dbar_{\cD} (X_0 u) \right\} \wedge \eta \wedge \Theta^{n-1}
\nonumber\\
&=& \frac{inr}{4} \dbar_{\cD} u \wedge \d_{\cD} (\bar X_0 u) \wedge \Theta^{n-1} \wedge d\theta
\nonumber\\
&-& \frac{inr}{4} \d_{\cD} u \wedge \dbar_{\cD} (X_0 u) \wedge \Theta^{n-1} \wedge d\theta, 
\end{eqnarray}
where we used the fact that the top degree in the base directions is $(n,n)$. 
Let $k=1,\cdots, n$, and then we are going to consider the following integrals on the sphere: 
\begin{equation}
\label{df-009}
  \int_{S_r} \d_{\cD} (\bar X_0 u) \wedge 
\dbar_{\cD} u  \wedge (i \d_{\cD} \dbar_{\cD} u)^{k-1} \wedge \left( r \bar X_0 u \right)^{n-k} \omega^{n-k} \wedge d\theta
\end{equation}
Next we utilize equation \eqref{df-0011} to obtain the following:
\begin{eqnarray}
\label{df-010}
&&d_{S_r} \left\{ (\bar X_0 u)^{n-k+1} \dbar_{\cD}u \wedge (\d_{\cD}\dbar_{\cD} u)^{k-1} \wedge \omega^{n-k} \wedge d\theta \right\}
\nonumber\\
&=& d_{\xi_0} \left\{ (\bar X_0 u)^{n-k+1} \dbar_{\cD}u \wedge (\d_{\cD}\dbar_{\cD} u)^{k-1} \wedge \omega^{n-k} \wedge d\theta \right\}
\nonumber\\
&+& (n-k+1) \d_{\cD} (\bar X_0 u) \wedge  
\dbar_{\cD} u  \wedge ( \d_{\cD} \dbar_{\cD} u)^{k-1} \wedge \left(  \bar X_0 u \right)^{n-k} \omega^{n-k} \wedge d\theta
\nonumber\\
&+& (\bar X_0 u)^{n-k +1}  ( \d_{\cD} \dbar_{\cD} u)^{k} \wedge \omega^{n-k}\wedge d\theta.
\nonumber\\
\end{eqnarray}
Here the operator $\dbar_{\cD}$ does not contribute in the above computation 
because of the degree reason again. 
Moreover, the first term on the R.H.S. of equation \eqref{df-010} vanishes 
after taking the integral along the $S^1$-direction, 
and this will be proved in Lemma \ref{lem-df-001}. 

Hence we obtain the following integration by parts for the first term
on the R.H.S. of equation \eqref{df-008}: 
\begin{eqnarray}
\label{df-011}
&& -(inr) \int_{S_r}  \d_{\cD} (\bar X_0 u) \wedge  \dbar_{\cD} u  \wedge \Theta^{n-1} \wedge d\theta
\nonumber\\
&=&  \sum_{k=1}^n  \left( \begin{array}{c} n-1 \\ k-1 \end{array} \right) \frac{n}{n-k+1}
\int_{S_r} (r\bar X_0 u)^{n-k+1} ( i\d_{\cD} \dbar_{\cD} u)^{k} \wedge \omega^{n-k}\wedge d\theta,
\nonumber\\
\end{eqnarray}
and the second term is equal to its complex conjugate as 
\begin{eqnarray}
\label{df-012}
&& (inr) \int_{S_r}  \dbar_{\cD} ( X_0 u) \wedge  \d_{\cD} u  \wedge \Theta^{n-1} \wedge d\theta
\nonumber\\
&=&  \sum_{k=1}^n  \left( \begin{array}{c} n-1 \\ k-1 \end{array} \right) \frac{n}{n-k+1}
\int_{S_r} (rX_0 u)^{n-k+1} ( -i\dbar_{\cD} \d_{\cD} u)^{k} \wedge \omega^{n-k}\wedge d\theta.
\nonumber\\
\end{eqnarray}
Then we are ready to prove the following decomposition formula. 

\begin{theorem}
\label{thm-df-001}
For any function $u\in \cP^{2}(B_1)$, we have 
\begin{eqnarray}
\label{df-013}
&& \int_{S_r} d^cu \wedge (dd^c u)^n 
 \nonumber\\
 &=& \sum_{k=0}^{n}  \left( \begin{array}{c} n+1 \\ k+1 \end{array} \right) (-1)^k 
 \int_{S_r} \left\{   (r \bar X_0 u)^{k+1} + (r X_0 u)^{k+1} \right\} \omega^k \wedge \Theta^{n-k}\wedge \frac{d\theta}{4}. 
 \nonumber\\
\end{eqnarray}
\end{theorem}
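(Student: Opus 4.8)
The plan is to assemble the formula from the ingredients already prepared in equations \eqref{df-0045}--\eqref{df-012}, so that what remains is essentially combining two contributions and running a combinatorial identity; the only genuinely analytic input, the vanishing of the $S^1$-exact boundary terms, is quarantined in Lemma \ref{lem-df-001} and may be invoked as a black box. I would start from \eqref{df-0045}, which writes $d^cu|_{S_r}\wedge(dd^cu|_{S_r})^n$ as the sum of a \emph{Reeb term} $(ru_r)\,\eta\wedge\Theta^n$ and a \emph{mixed term} $n\,d^c_{\cD}u\wedge\Xi\wedge\Theta^{n-1}$, and treat them separately. For the Reeb term, the observation is that by \eqref{bf-004} one has $\eta=\tfrac14 d\theta+(\text{a }1\text{-form in the base directions})$, while $\Theta^n$ is already of top degree $(n,n)$ along the base; hence only the $\tfrac14 d\theta$ component of $\eta$ contributes, leaving $\tfrac14(ru_r)\,d\theta\wedge\Theta^n$. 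Writing $ru_r=r\bar X_0u+rX_0u$ (cf. \eqref{ld-003}) and expanding $\Theta^n$ through the two dual representations of $\Theta$ in \eqref{df-005}--\eqref{df-007}, i.e. $\Theta=i\d_{\cD}\dbar_{\cD}u+r(\bar X_0u)\,\omega$ for the first summand and its conjugate for the second, turns the Reeb term into $\tfrac14\sum_{k=0}^{n}\binom{n}{k}\bigl\{(r\bar X_0u)^{n-k+1}(i\d_{\cD}\dbar_{\cD}u)^k+(\text{conj.})\bigr\}\wedge\omega^{n-k}\wedge d\theta$.

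Next I would integrate the mixed term over $S_r$ and use \eqref{df-008} to rewrite $n\int_{S_r}d^c_{\cD}u\wedge\Xi\wedge\Theta^{n-1}$ as $\tfrac{inr}{4}\int_{S_r}\dbar_{\cD}u\wedge\d_{\cD}(\bar X_0u)\wedge\Theta^{n-1}\wedge d\theta$ plus its complex conjugate. Expanding $\Theta^{n-1}$ binomially and applying the integration-by-parts identity \eqref{df-010} at each level $k$ --- discarding the $d_{\xi_0}$-exact pieces, which integrate to zero on the sphere by Lemma \ref{lem-df-001} --- transfers the CR derivative off $\bar X_0u$ (respectively off $X_0u$) and back onto $u$, reproducing \eqref{df-011}--\eqref{df-012}. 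After the elementary simplification $\binom{n-1}{k-1}\tfrac{n}{n-k+1}=\binom{n}{k-1}$, the mixed term becomes $\tfrac14\sum_{k=1}^{n}\binom{n}{k-1}\bigl\{(r\bar X_0u)^{n-k+1}(i\d_{\cD}\dbar_{\cD}u)^k+(\text{conj.})\bigr\}\wedge\omega^{n-k}\wedge d\theta$.

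Finally I would add the two pieces. Pascal's rule $\binom{n}{k}+\binom{n}{k-1}=\binom{n+1}{k}$ collapses the combined $k$-sum to $\tfrac14\sum_{k=0}^{n}\binom{n+1}{k}\int_{S_r}\bigl\{(r\bar X_0u)^{n-k+1}(i\d_{\cD}\dbar_{\cD}u)^k+(\text{conj.})\bigr\}\wedge\omega^{n-k}\wedge d\theta$, and re-summing from powers of $i\d_{\cD}\dbar_{\cD}u=\Theta-r(\bar X_0u)\,\omega$ back to powers of $\Theta$ converts this into the stated form \eqref{df-013}; that last step is the binomial identity $\binom{n+1}{m}=\sum_{j=0}^{n-m}(-1)^j\binom{n+1}{j+1}\binom{n-j}{m}$, which one verifies by reading off the coefficient of $x^m$ in $(1+x)^{n+1}-x^{n+1}$.

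I expect the main obstacle to be not conceptual but the bookkeeping: keeping straight the signs produced by $d^c=\tfrac i2(\dbar-\d)$ and by the anticommuting CR $1$-forms, confirming that the exact term in \eqref{df-010} carries no stray base-direction factor (so that degree counting really kills everything except the $d\theta$-component of $\eta$ and $\Xi$), and checking the final combinatorial identity that repackages the $i\d_{\cD}\dbar_{\cD}u$-expansion as a $\Theta$-expansion. Everything else --- the splitting \eqref{df-0045}, the two dual forms of $\Theta$, and the sphere integration by parts --- is already in place in the preceding computations and in Lemma \ref{lem-df-001}.
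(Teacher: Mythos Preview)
Your proposal is correct and follows essentially the same route as the paper: split via \eqref{df-0045} into the Reeb and mixed contributions, expand with \eqref{df-006}--\eqref{df-007} and \eqref{df-011}--\eqref{df-012}, and combine into \eqref{df-014}. The only difference is that you make explicit both the Pascal step $\binom{n}{k}+\binom{n}{k-1}=\binom{n+1}{k}$ and the final repackaging identity $\binom{n+1}{m}=\sum_{j}(-1)^j\binom{n+1}{j+1}\binom{n-j}{m}$, whereas the paper simply cites the combinatorial identity from \cite{HLX}, Corollary 5.3.
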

\begin{proof}
Combing with equation \eqref{df-006}, \eqref{df-007}, \eqref{df-011} and \eqref{df-012},
we obtain
\begin{eqnarray}
\label{df-014}
&& \int_{S_r} d^cu \wedge (dd^c u)^n 
 \nonumber\\
 &=& \sum_{k=0}^{n}  \left( \begin{array}{c} n+1 \\ k \end{array} \right)
 \int_{S_r}  (r \bar X_0 u)^{n-k+1} \omega^{n-k} \wedge (i \d_{\cD}\dbar_{\cD} u)^k \wedge \frac{d\theta}{4}
 \nonumber\\
 &+& \sum_{k=0}^{n}  \left( \begin{array}{c} n+1 \\ k \end{array} \right)
 \int_{S_r}  (r X_0 u)^{n-k+1} \omega^{n-k} \wedge (- i \dbar_{\cD} \d_{\cD} u)^k \wedge \frac{d\theta}{4}. 
 \nonumber\\
\end{eqnarray}

Then our decomposition formula follows, 
after applying a combinatoric identity as in Corollary (5.3), \cite{HLX}.

\end{proof}

Finally, we provide the following Lemma \ref{lem-df-001} to justify the 
integration by parts in equation \eqref{df-011} and \eqref{df-012}.  

\begin{lemma}
\label{lem-df-001}
Suppose $f$ is a smooth $2n$-form on the sphere $S_r$, 
and it has degree $(n-1, n)$ in the directions of the base manifold $\bC\bP^n$. 
Then we have 
\begin{equation}
\label{df-015}
\int_{S_r} d_{\xi_0} f =0. 
\end{equation}
\end{lemma}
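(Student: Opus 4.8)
The plan is to prove the identity by pure Cartan calculus on the closed manifold $S_r$, letting the Reeb field $\xi_0$ do all the work, so that no computation in the Hopf coordinate is needed. The starting point is the characterization of $d_{\xi_0}$ as the component of the exterior derivative along the Reeb direction: for any form $\alpha$ on $S_r$ one has $d_{\xi_0}\alpha=\eta_0\wedge\iota_{\xi_0}(d\alpha)$, which is consistent with the splitting \eqref{df-0011} because $d\alpha-\eta_0\wedge\iota_{\xi_0}(d\alpha)$ is annihilated by $\iota_{\xi_0}$, hence horizontal, and therefore decomposes into its $\d_\cD$- and $\dbar_\cD$-parts by bidegree via the integrable CR structure \eqref{ss-002}. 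Feeding $f$ into Cartan's magic formula $\iota_{\xi_0}\circ d=\mathcal L_{\xi_0}-d\circ\iota_{\xi_0}$ then gives
\[
\int_{S_r}d_{\xi_0}f=\int_{S_r}\eta_0\wedge\mathcal L_{\xi_0}f-\int_{S_r}\eta_0\wedge d(\iota_{\xi_0}f),
\]
and it remains to see that both terms on the right vanish.

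For the first term I would use that $\xi_0$ is (a multiple of) the Reeb field of the contact form $\eta_0$: since $\iota_{\xi_0}\eta_0=1$ and $d\eta_0=-2\,\omega_{FS}$ is pulled back from $\bC\bP^n$ (see \eqref{bf-007}), Cartan's formula gives $\mathcal L_{\xi_0}\eta_0=d(\iota_{\xi_0}\eta_0)+\iota_{\xi_0}(d\eta_0)=0$. Hence $\eta_0\wedge\mathcal L_{\xi_0}f=\mathcal L_{\xi_0}(\eta_0\wedge f)$, and $\eta_0\wedge f$ is a form of top degree $2n+1$ on $S_r$; for a top-degree form $\mu$ one has $\mathcal L_{\xi_0}\mu=d(\iota_{\xi_0}\mu)$ (the term $\iota_{\xi_0}d\mu$ drops by degree), so its integral over the closed manifold $S_r$ is zero by Stokes. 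Thus the first term is $0$.

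For the second term I would integrate by parts once: from $d(\eta_0\wedge\iota_{\xi_0}f)=(d\eta_0)\wedge\iota_{\xi_0}f-\eta_0\wedge d(\iota_{\xi_0}f)$ together with Stokes on $S_r$ one gets $\int_{S_r}\eta_0\wedge d(\iota_{\xi_0}f)=\int_{S_r}(d\eta_0)\wedge\iota_{\xi_0}f=-2\int_{S_r}\omega_{FS}\wedge\iota_{\xi_0}f$. This is the one and only place the hypothesis on $f$ enters: $\iota_{\xi_0}f$ is obtained from $f$ by contracting away its unique non-horizontal leg, so it is a horizontal form of bidegree $(n-1,n)$ along $\bC\bP^n$; wedging it with the $(1,1)$-form $\omega_{FS}$ produces a form of base-bidegree $(n,n+1)$, which is identically zero since $\dim_{\bC}\bC\bP^n=n$. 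Hence the second term is $0$ as well, and $\int_{S_r}d_{\xi_0}f=0$.

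The only delicate point I anticipate is matching the characterization $d_{\xi_0}\alpha=\eta_0\wedge\iota_{\xi_0}(d\alpha)$ with the decomposition \eqref{df-0011} used elsewhere in the paper; once that is granted, the rest is a two-line manipulation. (Alternatively, the same conclusion can be reached more pedestrianly by integrating along the $S^1$-fibres of $p\colon S_r\to\bC\bP^n$: in the Hopf coordinate $d_{\xi_0}f$ is, fibrewise, a $\theta$-derivative wedged into the fibre direction, whose integral over a full period of $\theta$ vanishes because $f$ is single-valued on $S_r$, the base forms passing unaffected through the fibre integral.)
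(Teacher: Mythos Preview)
Your parenthetical alternative at the end is exactly the paper's argument: in the Hopf coordinate one writes $f=\sum_j f_j\,d\z^1\wedge\cdots\wedge\widehat{d\z^j}\wedge\cdots\wedge d\z^n\wedge d\bar\z\wedge d\theta$, so that $d_{\xi_0}f=\sum_j\xi_0(f_j)\,\eta_0\wedge(\text{same form})$, and after expanding $\eta_0$ via \eqref{bf-005} and using Fubini the fibre integral $\int_{S^1}\partial_\theta f_j\,d\theta$ vanishes by periodicity.

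Your main Cartan-calculus argument, however, has a genuine gap precisely at the point you flagged: the identification $d_{\xi_0}\alpha=\eta_0\wedge\iota_{\xi_0}(d\alpha)$ does \emph{not} match the paper's splitting \eqref{df-0011}. Comparing with \eqref{df-016} (and using that the coordinate coframe $d\theta,d\z^\a,d\bar\z^\a$ is $\xi_0$-invariant) one finds instead
\[
d_{\xi_0}\alpha=\eta_0\wedge\mathcal L_{\xi_0}\alpha,
\]
which differs from your formula by $\eta_0\wedge d(\iota_{\xi_0}\alpha)$. For the $f$ at hand this difference is not harmless: since $f$ has degree $2n$, the form $df$ is top-degree on $S_r$, and therefore $\eta_0\wedge\iota_{\xi_0}(df)=df$. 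So your argument, as written, literally proves the triviality $\int_{S_r}df=0$, not the lemma. This matters because in \eqref{df-010} the lemma is invoked exactly to discard $d_{\xi_0}f$ while \emph{retaining} the nonzero $\d_{\cD}f$ piece of $df$; if $d_{\xi_0}f$ were all of $df$ there would be nothing left for the integration by parts.

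That said, with the correct identification your argument becomes short and coordinate-free: $\int_{S_r}d_{\xi_0}f=\int_{S_r}\eta_0\wedge\mathcal L_{\xi_0}f=\int_{S_r}\mathcal L_{\xi_0}(\eta_0\wedge f)=\int_{S_r}d\bigl(\iota_{\xi_0}(\eta_0\wedge f)\bigr)=0$, using only $\mathcal L_{\xi_0}\eta_0=0$ and that $\eta_0\wedge f$ is top-degree. This is a genuinely cleaner route than the paper's local computation (and incidentally shows the bidegree hypothesis on $f$ is not needed), but your second term and the appeal to the $(n,n{+}1)$-vanishing are superfluous once the right characterization of $d_{\xi_0}$ is used.
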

\begin{proof}
Locally we can write the form $f$ as 
$$ f: = f_j d\z^1 \wedge \cdots \wedge d \hat{\z}^j \wedge \cdots \wedge d\z^n \wedge d\bar\z \wedge d\theta,$$
where $d\bar \z$ stands for the $(0,n)$ form $d\bar\z^1 \wedge \cdots \wedge d\bar \z^n$. 
Thanks to equation \eqref{bf-005}, 
we can further write the following exterior derivatives as 
\begin{eqnarray}
\label{df-016}
d_{\xi_0} f &=& \xi_0 (f_j) \eta_0 \wedge d\z^1 \wedge \cdots \wedge d \hat{\z}^j \wedge \cdots \wedge d\z^n \wedge d\bar\z \wedge d\theta
\nonumber\\
&=&  \sum_j \left\{ H_j ( \d_{\theta} f_j ) \right\}  d\theta\wedge d\z \wedge d\bar\z,
\end{eqnarray}
where $H_j: = H_j (\z, \bar\z)$ is a basic function for all $j=1,\cdots, n$.
Hence we can take the integral as 
\begin{eqnarray}
\label{df-017}
\int_{S_r} d_{\xi_0} f &=&   \sum_j \int_{\bC\bP^n} H_j d\z \wedge d\bar\z \int_{S^1} (\d_{\theta} f_j) d\theta
\nonumber\\
&=& 0. 
\end{eqnarray}
This is because $f$ is a smooth form on the sphere $S_r$, 
and then its coefficients $f_j$'s must be periodic in the $S^1$-direction. 
Therefore, our result follows.

\end{proof}

\bigskip

\bigskip
\bigskip

\bigskip

\begin{bibdiv}
\begin{biblist}

\bib{ACH19}{article}{
   author={ \AA hag, P.},
   author={Cegrell, U. },
   author={Hiep, P.-H.},
   title={On the Guedj-Rashkovskii conjecture},
   journal={Ann. Polon. Math.},
   volume={123},
   date={2019},
   number={},
   pages={15-20},
}

\bib{ACH15}{article}{
   author={ \AA hag, P.},
   author={Cegrell, U. },
   author={Hiep, P.-H.},
   title={Monge-Amp\` ere measures on subvarieties }, 
   journal={J. Math. Anal. Appl. },
   volume={423},
   date={2015},
   number={},
   pages={94-105},
}

\bib{BT0}{article}{
   author={Bedford, E.}
   author={Talyor, A.},
   title={The Dirichlet Problem for a Complex Monge-Amp\`ere equation},
   journal={Inventiones math.},
   volume={37},
   date={1976},
   number={},
   pages={1-44},
}

\bib{BT}{article}{
   author={Bedford, E.}
   author={Talyor, A.},
   title={A new capacity for plurisubharmonic functions},
   journal={Acta Math.},
   volume={149},
   date={1982},
   number={},
   pages={1-41},
}

\bib{BB}{article}{
  author={Berman, R.}
   author={Berndtsson, B.},
   title={Plurisubharmonic functions with symmetry},
   journal={ Indiana Univ. Math. J.},
   volume={63},
   date={2014},
   number={},
   pages={345-365},
}

\bib{SW24}{article}{
   author={Biard, S.},
   author={Wu, J. }
   title={ Equivalence between VMO functions and zero Lelong numbers functions},
   journal={arXiv:2403.03568},
   volume={}
   date={}
   page={}
}

\bib{Blo04}{article}{
  author={Blocki, Z.},
   title={On the definition of the Monge-Amp\`ere operator in $\bC^2$},
   journal={Math. Ann.},
   volume={328},
   date={2004},
   number={},
   pages={415-423},
  }

\bib{BFJ07}{article}{
   author={Boucksom, S.},
   author={Favre, C. },
   author={Jonsson, M.},
   title={Valuations and plurisubharmonic singularities.},
   journal={Publ. Res. Inst. Math. Sci.},
   volume={44},
   date={2008},
   number={2},
   pages={449-494},
}

\bib{BG08}{article}{
   author={Boyer, C.P.},
   author={Galicki, K.},
   title={Sasakian Geometry},
   journal={Oxford Math. Monogr.},
   volume={},
   date={2008},
   number={},
   pages={},
}


\bib{CW21}{article}{
   author={Chen, Bo-Yong},
   author={Wang, Xu} 
   title={Bergman kernel and oscillation theory of plurisubharmonic functions},
   journal={Math. Z.},
   volume={297},
   date={2021},
   number={},
   pages={1507-1527},
}

\bib{CG09}{article}{
   author={Coman, D.},
   author={Guedj, V.},
   title={Quasiplurisubharmonic Green functions.},
   journal={J. Math. Pures Appl. },
   volume={92},
   date={2009},
   number={},
   pages={456-475},
}

\bib{Dem93}{article}{
   author={Demailly, J.P.}, 
   title={Monge-Amp\`ere operators, Lelong numbers and intersection theory},
   journal={Complex analysis and geometry, Univ. Ser. Math., Plenum, New York,},
   volume={},
   date={1993},
   number={},
   pages={115-193},
}

\bib{DLLWZ}{article}{
   author={Deng, F.}
   author={Li, Y.},
   author={Liu, Q.},
   author={Wang, Z.},
   author={Zhou, X.},
   title={Log truncated threshold and zero mass conjecture},
   journal={arXiv: 2501.16669  },
   volume={},
   date={},
   page={}
}

\bib{DGZ15}{article}{
   author={Dinew, S.}
   author={Guedj, V.},
   author={Zeriahi, A.},
   title={Open problems in pluripotential theory},
   journal={arXiv: 1511.00705},
   volume={},
   date={},
   page={}
}




\bib{GT}{article}{
   author={Gilbarg, D.}
   author={Trudinger, N.}
   title={Elliptic partial differential equations of second order},
   journal={Springer, Classics in Mathematics},
   volume={224},
   date={},
   number={},
   pages={},
}

\bib{GKN00}{article}{
   author={Godli\'nski, M.}
   author={Kopczy\'nski, W.}
   author={Nurowski, P.}
   title={Locally Sasakian manifolds},
   journal={},
   volume={},
   date={2000},
   number={},
   pages={},
}

\bib{GZ12}{article}{
   author={Guan, Pengfei}
   author={Zhang, Xi}
   title={Regularity of the geodesic equation in the space of Sasakian metrics},
   journal={Advances in Mathematics},
   volume={230},
   date={2012},
   number={},
   pages={321-371},
}

\bib{G10}{article}{
   author={Guedj, V.},
   title={Propri\'et\'es ergodiques des applications rationnelles.},
   journal={Quelques aspects des syst\`emes dynamiques polynomiaux S. Cantat, A. Chambert-Loir, V.Guedj Panoramas et Synth. },
   volume={30},
   date={2010},
   page={}
}

\bib{GZ}{article}{
   author={Guedj, V.},
   author={Zeriahi, A.},
   title={Degenerate complex Monge-Amp\`ere equations},
   journal={EMS},
   volume={},
   date={2017},
   page={}
}

\bib{H13}{article}{
   author={He Weiyong},
   title={The Sasaki-Ricci Flow and compact Sasaki manifolds of positive transversal holomorphic bisectional curvature},
   journal={J. Geom. Anal.},
   volume={23},
   date={2013},
   page={1876-1931}
}

\bib{HS16}{article}{
   author={W. He},
   author={S. Sun},
   title={Frankel conjecture and Sasaki geometry},
   journal={Advances in Mathematics},
   volume={291},
   date={2016},
   page={912-960}
}

\bib{HL21}{article}{
   author={W. He },
   author={J. Li},
   title={Geometric pluripotential theory on Sasaki manifolds},
   journal={The Journal of Geometric Analysis},
   volume={31},
   date={2021},
   page={1093-1179}
}

\bib{HLX}{article}{
   author={W. He},
   author={L. Li},
   author={X. Xu}
   title={On the residual Monge-Amp\`{e}re mass of plurisubharmonic functions with symmetry, II},
   journal={arxiv.org/2309.13288},
   volume={},
   date={},
   page={}
}




\bib{Li19}{article}{
   author={Long Li},
   title={The Lelong number, the Monge-Amp\`ere mass and the Schwarz symmetrization of plurisubharmonic functions.},
   journal={Ark. Mat.},
   volume={58},
   date={2020},
   number={},
   pages={369-392},
}

\bib{Li23}{article}{
   author={Long Li},
   title={On the residual Monge-Amp\`{e}re mass of plurisubharmonic functions with symmetry in $\bC^2$},
   journal={Math. Z.},
   volume={},
   date={},
   number={},
   pages={},
}

\bib{LW98}{article}{
   author={Fanghua Lin},
   author={Changyou Wang}
   title={Energy identity of harmonic map flows from surfaces},
   journal={Calc. Var.},
   volume={6},
   date={1998},
   number={},
   pages={369-380},
}


\bib{Ra01}{article}{
   author={Rashkovskii, A. }, 
   title={Lelong numbers with respect to regular plurisubharmonic functions},
   journal={Results Math.},
   volume={39},
   date={2001},
   number={},
   pages={320-332},
}

\bib{Ra06}{article}{
   author={Rashkovskii, A. }, 
   title={Relative types and extremal problems for plurisubharmonic functions.},
   journal={Int. Math. Res. Not. 2006 Art. ID 76283, 26 pp},
   volume={},
   date={},
   number={},
   pages={},
}

\bib{Ra13}{article}{
   author={Rashkovskii, A. }, 
   title={Analytic approximations of plurisubharmonic singularities.},
   journal={Math. Z. },
   volume={275},
   date={2013},
   number={3-4},
   pages={1217-1238},
}

\bib{Ra16}{article}{
   author={Rashkovskii, A. }, 
   title={Some problems on plurisubharmonic singularities.},
   journal={Mat. Stud.},
   volume={45},
   date={2016},
   number={},
   pages={104-108},
}

\bib{SH62}{article}{
   author={Sasaki, S.},
   author={Hatakeyama, Y.}, 
   title={On differential manifolds with contact metric structures.},
   journal={ },
   volume={},
   date={1962},
   number={},
   pages={},
}




\bib{Wik05}{article}{
   author={Wiklund, J. }, 
   title={Plurcomplex charge at weak singularities. },
   journal={arXiv:math/0510671. },
   volume={},
   date={},
   number={},
   pages={},
}

\end{biblist}
\end{bibdiv}

\end{document}